\newcommand{\E}[1]{\textbf{E} \left[#1\right]}
\def\EE{\mathcal{E}}
\def\LL{\mathcal{L}}
\def\mP{\mathcal{P}}
\def\mC{\mathcal{C}}
\def\mG{\mathcal{G}}
\def\a{\alpha}
\def\b{\beta}
\def\d{\delta}
\def\D{\Delta}
\def\e{\varepsilon}
\def\f{\phi}
\def\g{\gamma}
\def\G{\Gamma}
\def\l{\lambda}
\def\m{\mu}
\def\n{\nu}
\def\s{\sigma}
\def\t{\tau}
\def\om{\omega}
\def\OM{\Omega}
\newcommand\Prob[1]{{\mbox{Pr}\left\{#1\right\}}}
\newtheorem{lemma}{Lemma}
\newtheorem{theorem}{Theorem}
\newcommand{\brac}[1]{\left( #1\right)}
\newcommand{\bfrac}[2]{\brac{\frac{#1}{#2}}}
\title{Deletion of oldest edges in a preferential attachment graph}
\author{Tony Johansson\thanks{Research supported in part by NSF Grant  DMS1362785. Email: tjohanss@andrew.cmu.edu}\\ Department of Mathematical Sciences\\Carnegie Mellon University\\Pittsburgh PA 15213\\U.S.A.}
\begin{document}
\maketitle



\begin{abstract}
We consider a variation on the Barab\'asi-Albert random graph process with fixed parameters $m\in \mathbb{N}$ and $1/2 < p < 1$. With probability $p$ a vertex is added along with $m$ edges, randomly chosen proportional to vertex degrees. With probability $1 - p$, the oldest vertex still holding its original $m$ edges loses those edges. It is shown that the degree of any vertex either is zero or follows a geometric distribution. If $p$ is above a certain threshold, this leads to a power law for the degree sequence, while a smaller $p$ gives exponential tails. It is also shown that the graph contains a unique giant component with high probability if and only if $m\geq 2$.
\end{abstract}

\section{Introduction}\label{introsec}

In recent years, considerable attention has been paid toward real-world networks such as the World Wide Web (e.g. \cite{fff}) and social networks such as Facebook \cite{ukbm} and Twitter \cite{msgl}. Many but not all of these networks exhibit a so-called power law, and are sometimes referred to as scale free, meaning that the number of elements of degree $k$ is asymptotically $k^{-\eta}$ for some constant $\eta > 0$. In \cite{betal} it is shown that the social network of scientific collaborations is scale free. For a number of real-world scale free networks see \cite{betal}.

As a means of describing such networks with a random graph, Barab\'asi and Albert \cite{ba} introduced a class of models, commonly called preferential attachment graphs, and argued that its degree distribution has a tail that decreases polynomially, a claim that was subsequently proved by Bollob\'as, Riordan, Spencer and Tusn\'ady \cite{brst}. This is in contrast to many well-known random graph models such as the Erd\H{o}s-R\'enyi model where the degree distribution has an exponential tail. While the Barab\'asi-Albert model in its basic form falls short as a description of the World Wide Web \cite{ah}, the model has become popular for modelling scale free networks.

The base principle of preferential attachment graphs is the following: vertices are added sequentially to the graph, along with edges that attach themselves to previously existing vertices with probability proportional to their degree. This principle is susceptible to many variations, and can be combined with other random graph models. See for example Flaxman, Frieze and Vera \cite{ffv1}, \cite{ffv2}, who introduced a random graph model combining aspects of preferential attachment graphs and random geometric graphs.

Real-world networks will encounter both growth and deletion of vertices and edges. Bollob\'as and Riordan \cite{br} considered the effect of deleting vertices from the graph after it has been generated. Cooper, Frieze and Vera considered random vertex deletion \cite{cfv}, and Flaxman, Frieze and Vera considered adversarial vertex deletion \cite{ffv3}, where vertices are deleted while the graph is generated. Chung and Lu \cite{cl} considered a general growth-deletion model for random power law graphs.

In this paper, we consider a preferential attachment model in which the oldest edges are regularly removed while the graph is generated. There are two fixed parameters, an integer $m\geq 1$ and a real number $1/2 < p < 1$. As the graph is generated, with probability $p$ we add a vertex along with $m$ edges to random endpoints proportional to their degree. Choices are made with or without replacement. The vertices are ordered by time of insertion, and with probability $1 - p$ we remove all edges that were added along with a vertex, where the vertex is the oldest for which this has not already been done. This is a new variation of the preferential attachment model, and the focus on the paper is to find the degree sequence of this graph. The proof method also leads to a partial result on the existence of a giant component.

In Theorem \ref{thm2} we find the degree sequence of the graph, and show that it exhibits a phase transition at $p = p_0\approx 0.83$, independently of $m$. If $p > p_0$ then the degree sequence follows a power law, while $p < p_0$ gives exponential tails. A real-world example of this behaviour is given by family names; in \cite{mlnm} it is shown that the frequency of family names in Japan follow a power law, while \cite{kp} shows that family names in Korea decay exponentially.

We prove three theorems. The first deals with the degree distribution of any fixed vertex, show that it is the sum of $m$ independent variables that are either zero or geometrically distributed. We let $G_n$ denote the $n$th member of the graph sequence described above. The notation given here is imprecise at this point, but the theorems will be restated with precise notation below.

Let $\mathcal{D}$ be the event that at some point of the graph process, the graph contains no edges. The probability of $\mathcal{D}$ is addressed in Lemma \ref{sigmalemma}, and we will be conditioning on $\mathcal{D}$ not occurring. At this point we remark that if the process starts with a graph with $\om_H$ edges where $\om_H = \om_H(n)\to\infty$ as $n\to \infty$, then $\Prob{\mathcal{D}} = o(1)$. Note that the $\om$ in the following theorem is different from $\om_H$.
\begin{theorem}\label{thm1}
Suppose $\om = o(\log n)$ tends to infinity with $n$. Let $d(n, v)$ denote the degree of vertex $v$ in $G_n$. Conditioning on $\overline{\mathcal{D}}$, there exist functions $p(n, v), q(n, v)) \in [0, 1]$ and a constant $0 < c < 1/2$ such that $d(n, v)$ is distributed as the sum $d_1(n,v) +d_2(n, v) + \dots + d_m(n, v)$ of independent random variables with
$$
\Prob{d_i(n, v) = k} = \left\{\begin{array}{ll}
1 - q(n, v) + O(n^{-c}), & k = 0, \\
q(n, v)p(n, v)(1 - p(n, v))^{k - 1} + O(n^{-c}), & k > 0,
\end{array}\right.
$$
for $i = 1,\dots,m$, for all $v\geq n/\om$.
\end{theorem}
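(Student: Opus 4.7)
The plan is to decompose the degree of $v$ using the ``slot'' construction from the classical analysis of Bollob\'as, Riordan, Spencer and Tusn\'ady: assign $m$ slots to each vertex and attach each edge endpoint to a specific slot, chosen proportional to slot degree. Let $d_i(n,v)$ denote the degree of the $i$th slot of $v$ in $G_n$, so that $d(n,v) = \sum_{i=1}^m d_i(n,v)$ by construction; by the symmetry of the dynamics among the $m$ slots of $v$, the marginals $d_1(n,v),\dots,d_m(n,v)$ are identically distributed.

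\textbf{Reduction to a single slot.} To get independence up to $O(n^{-c})$, I would show that the joint distribution of $(d_1(n,v),\dots,d_m(n,v))$ is close in total variation to $m$ i.i.d.\ copies of the common marginal. The slots of $v$ interact only through the total slot-degree normalisation $D(t)$ appearing in the attachment probability, and through the simultaneous $-1$ shock they all receive at the time $T_v$ when $v$ loses its original edges. A second-moment bound on $\mathrm{Cov}(d_i(n,v),d_j(n,v))$, using concentration of $D(t)$ about its linear mean of order $(2p-1)mt$, should give cross-correlations of the stated size; the $T_v$ shock subtracts a common $-1$ from each slot and can be absorbed in the error by conditioning on whether $T_v\le n$.

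\textbf{Birth/death analysis of a single slot.} Let $X_t=d_i(t,v)$ for a fixed slot. Then $X_t$ evolves as a time-inhomogeneous linear birth/death chain: from state $k\ge 1$, $X_t$ rises by $1$ with probability $\sim pmk/D(t)$ (a new vertex attaches an edge to this slot) and falls by $1$ with probability $\sim (1-p)ck/D(t)$ (an incident edge is destroyed because the opposite endpoint's vertex is being deleted), with $0$ absorbing; a single extra $-1$ is delivered at $T_v$. The decisive feature is that \emph{both} rates are proportional to $k$, so the conditional probability of moving up given any non-trivial move is a state-independent ratio $\rho(t)$. For a linear birth/death chain started from state $1$ this is precisely what yields a distribution of the claimed geometric-plus-atom form. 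I would formalise this by induction on $k$, proving
\[ \frac{\Prob{X_n\geq k+1}}{\Prob{X_n\geq k}} = 1-p(n,v)+O(n^{-c}) \]
uniformly for $k=1,\dots,O(\log n)$, which suffices since the geometric tail beyond $O(\log n)$ has mass $O(n^{-c})$. Combined with $\Prob{X_n\geq 1}=q(n,v)+O(n^{-c})$, this yields the distribution of a single slot, and the reduction above finishes the theorem.

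The main obstacle will be maintaining the uniform $O(n^{-c})$ error in the inductive step. The ratio $\rho(t)$ drifts slowly as $D(t)$ changes, $D(t)$ itself fluctuates about its mean, and the discrete shock at $T_v$ couples the slots and acts non-proportionally on $X_t$. The restriction $v\ge n/\om$ with $\om=o(\log n)$ ensures that $v$ is recent enough that either $T_v$ has not yet occurred or it occurs near the end of the window, so the shock has only $O(n^{-c})$ impact on the marginal distributions; the conditioning on $\overline{\DD}$ guarantees that $D(t)$ stays bounded below throughout, so that the proportional-rate structure never degenerates.
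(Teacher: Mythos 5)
There is a genuine gap, and it is in the modelling of the death events. You describe the slot degree $X_t$ as a time-inhomogeneous \emph{Markov} birth/death chain in which the down-rate at time $t$ is proportional to the current state $k$. But in this model edges are not deleted at random proportional to degree: at a deletion step the graph loses precisely the $m$ edges emanating from the current oldest vertex $1_t$, so an edge incident to $v$ that was created at time $s$ is removed at the (essentially deterministic, given $\s$) time $\approx \g s$ with $\g = p/(1-p)$. Each death is therefore rigidly coupled to the corresponding birth --- every incident edge lives for exactly one unit of $\log_\g$-time --- and the process $X_t$ is not Markov in its current state; even the mean-field down-rate at time $t$ is governed by $d(t/\g,v)$, not $d(t,v)$. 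This is exactly why the paper models a slot as a Crump--Mode--Jagers process with \emph{deterministic} lifetime $1$: the generating function then satisfies the delay equation $F(s,\t)=\exp\{\a\int_{\t-1}^{\t}(F(s,u)-1)\,du\}$ rather than an ODE, and the resulting $p(\t),q(\t)$ are built from the piecewise-polynomial functions $Q_k$ of Section \ref{confun}. Your linear birth/death chain would still produce a geometric-plus-atom law (so the \emph{form} of the conclusion survives), but the inductive tail-ratio computation would be carried out on the wrong process and would yield the wrong parameters; more importantly, the justification that the up/down ratio is state-independent simply does not apply here, since the down-moves are not generated by a rate at all.

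A secondary error: you assert that for $v\ge n/\om$ the shock at $T_v$ (when $v$ loses its original $m$ edges) ``has only $O(n^{-c})$ impact on the marginal distributions.'' For every $v\le (1-p)n$ --- i.e.\ most of the admissible range --- we have $T_v\approx \g v/p\le n$, so the original edge is gone by time $n$ and the atom $1-q(n,v)$ at zero is created precisely by this removal together with the extinction of the edge's descendants. The shock is the heart of the distribution, not an error term. (Your slot decomposition itself, and the observation that the $m$ slots are nearly independent because they interact only weakly, do match the paper's strategy of tracing each incoming edge back to one of the $m$ original edges $e_1,\dots,e_m$ and exposing them sequentially; it is the single-slot analysis that needs to be replaced by the CMJ/delay-equation argument.)
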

We do not address the degrees of vertices $v < n/\om$. In particular, we present no bounds for the maximum degree of $G_n$. We have instead focused on finding the degree sequence and connected components of $G_n$.

The second theorem translates the degree distribution into a degree sequence for $G_n$. It shows that the graph follows a power law if and only if $p$ is above a certain threshold.
\begin{theorem}\label{thm2}
Let $p_0 \approx 0.83113$ be the unique solution in $(1/2, 1)$ to $p / (4p - 2) = \ln(p / (1-p))$. Let $X_k(n)$ denote the number of vertices of degree $k$ in $G_n$. Conditioning on $\overline{\mathcal{D}}$, there exists a sequence $\{x_k : k\geq 0\}$ such that
\begin{enumerate}[(i)]
\item if $\a < 1$ then $x_k = \a^{k(1+o_k(1))}$ and if $\a > 1$ then there exist constants $a, b > 0$ such that $x_k = ak^{-\eta-1} + O_k(k^{-\eta-2}\log^b k)$, where $\eta = \eta(p) > 2$ is defined for $p > p_0$, and
\item for any fixed $k \geq 0$, $X_k(n) = x_kn(1+o_n(1))$ with high probability\footnote{We say that a sequence of events $\EE_n$ occur with high probability (whp) if $\Prob{\EE_n}\to 1$ as $n\to\infty$}.
\end{enumerate}
\end{theorem}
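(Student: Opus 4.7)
The plan is to use Theorem 1 to compute the expected fraction of vertices of each fixed degree, and then to establish concentration of $X_k(n)$ around its mean by a martingale argument. I would define
$$x_k=\lim_{n\to\infty}\frac{1}{n}\sum_{v=\lceil n/\om\rceil}^n \Prob{d(n,v)=k\mid\overline{\DD}},$$
noting that the $o(n)$ vertices with $v<n/\om$ can only contribute $o(n)$ to $X_k(n)$ and may be discarded. Theorem 1 expresses the summand, up to an $O(n^{-c})$ error uniform in $v$, as the pmf at $k$ of a sum of $m$ i.i.d.\ mixed zero-or-geometric random variables with parameters $p(n,v),q(n,v)$. Assuming the proof of Theorem 1 also delivers pointwise limits $p(n,\lfloor tn\rfloor)\to p(t)$ and $q(n,\lfloor tn\rfloor)\to q(t)$ as $n\to\infty$ for $t\in(0,1]$, the Riemann sum converges to
$$x_k=\int_0^1 f_k(t)\,dt,\qquad \Ex X_k(n)=x_k n(1+o(1)),$$
where $f_k(t)$ denotes the value at $k$ of the limiting pmf.

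To extract the tail behavior in (i), I would use the negative-binomial structure of a convolution of geometrics to write $f_k(t)\sim c(t,m)k^{m-1}(1-p(t))^k$ for a continuous prefactor $c(t,m)$. The integral $\int_0^1 c(t,m)k^{m-1}(1-p(t))^k\,dt$ then splits into two regimes via Laplace's method. If $\min_{t\in[0,1]} p(t)>0$, the integral decays like $\a^{k(1+o_k(1))}$, with the parameter $\a$ coming from the interior maximum of $1-p(t)$. If instead $p(t)\to 0$ at some boundary $t_0$ at a power rate $p(t)\sim C(t-t_0)^\b$, a boundary-layer substitution yields polynomial decay $k^{-1-1/\b}$, so $\eta=1/\b$ together with constants $a,b$ extracted from the subleading terms of the expansion. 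Matching the two contributions (using the explicit form of $p(t),q(t)$ obtained from the proof of Theorem 1) produces the threshold equation $p/(4p-2)=\ln(p/(1-p))$, identifying $p_0$.

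For part (ii), I would apply Azuma's inequality to the Doob martingale $Z_i=\Exb{X_k(n)\mid\cF_i}$, where $\cF_i$ is the $\s$-algebra generated by the first $i$ steps of the graph process. A single step (vertex addition or edge-block deletion) changes the degree of at most $m+1$ vertices by at most $m$ each, so $X_k(n)$ changes by at most a constant $C(m,k)$, and $|Z_i-Z_{i-1}|\leq C(m,k)$. Azuma-Hoeffding then gives $|X_k(n)-\Ex X_k(n)|=O(\sqrt{n\log n})$ whp, which combined with $\Ex X_k(n)=x_k n(1+o(1))$ yields $X_k(n)=x_k n(1+o_n(1))$ whp.

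The main obstacle will be the asymptotic analysis of $x_k$: one must extract enough quantitative information about $p(t)$ and $q(t)$ from the proof of Theorem 1 to determine both the threshold $p_0$ and the exponent $\eta(p)$. Executing the Laplace/boundary-layer analysis and verifying that the interior and boundary contributions exchange dominance precisely at the value of $p$ satisfying $p/(4p-2)=\ln(p/(1-p))$, while also pinning down the polynomial constants $a$ and $b$, is the most delicate part of the argument; verifying that the excluded region $v<n/\om$ contributes negligibly is a secondary technicality that should follow from the same estimates.
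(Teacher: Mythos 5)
Your overall route is the same as the paper's: concentration of $X_k(n)$ about its mean via Azuma, first moment via Theorem \ref{thm1} summed over $v$ and read as a Riemann integral, then an asymptotic analysis of that integral in $k$. Your boundary-layer picture for part (i) is in fact the correct one: in the paper's time variable $\t=\log_\g(pn/v)$ the boundary $t\to 0^+$ becomes $\t\to\infty$, where $p(\t)\sim\l_3\zeta^\t$, i.e.\ $p\sim C t^{1/\eta}$, and the paper's substitution $u=\l_3\zeta^\t$ turning the integral into a Beta function $B(\eta+\ell,k-\ell+1)\sim\G(\eta+\ell)k^{-\eta-\ell}$ is exactly your boundary-layer computation (with $\b=1/\eta$). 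Two caveats there: the exponential rate $\a$ in the subcritical case also comes from the boundary $t\to0$ (where $1-p(t)\uparrow\a$), not from an interior maximum; and the pure negative-binomial term $\ell=m$ is not the only contributor --- every $\ell\in\{1,\dots,m\}$ in the decomposition by the number of nonzero summands contributes at the same order $k^{\ell-1}\cdot k^{-\eta-\ell}=k^{-\eta-1}$, so the constant $a$ is a sum over $\ell$. The genuinely hard content you defer --- the explicit asymptotics $p(\t)=1-\a+\l_1\zeta^{-\t}+\cdots$ for $\a<1$ and $p(\t)=\l_3\zeta^\t+O(\zeta^{2\t})$, $q(\t)=1-\zeta+O(\zeta^\t)$ for $\a>1$ --- is the paper's Lemma \ref{constlem}, proved by a generating-function analysis of the CMJ process; without it neither the threshold $\a=1$ nor the exponent $\eta=-\ln\g/\ln\zeta$ can actually be derived, so this is a substantial missing piece rather than a ``secondary technicality.''

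The concrete gap is in part (ii). You apply Azuma to the Doob martingale over the raw steps of the graph process and assert $|Z_i-Z_{i-1}|\le C(m,k)$ because one step changes at most $m+1$ degrees. That bounds the immediate effect of step $i$ on the current graph, not the martingale difference: in a preferential attachment process, altering the outcome of step $i$ (which vertices the new edges attach to, or whether the step is an addition or a deletion) changes the attachment probabilities and the deletion schedule at every subsequent step, so the conditional expectation of $X_k(n)$ can a priori shift by much more than $C(m,k)$. A coupling argument or a change of probability space is needed to get bounded differences. The paper resolves this by first conditioning on the addition/deletion string $\s$ (handled separately by Lemma \ref{sigmalemma}) and then working in the master graph $\G$, where each edge makes an \emph{independent} uniform choice of half-edge; on that product space the edge-exposure martingale is genuinely Lipschitz with constant $3$, since changing one edge's choice moves only its random endpoint. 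You should either adopt that representation or supply the coupling explicitly; as written, the Lipschitz bound does not follow.
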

The third theorem shows that $G_n$ has a giant component if and only if $m \geq 2$. This is in contrast to the classical Barab\'asi-Albert model which is trivially connected. Let $B(n) = \l \ln n$ when $p < p_0$ and $B(n) = \l n^{1/\eta}\ln n$ when $p > p_0$ for some constants $\l > 0$, $\eta > 2$, explicitly defined later. Note that when $p > p_0$ and $m$ is large, Theorem \ref{thm3} states that the number of vertices which are not in the largest component is $O_m(c^m n)$ for some $0 < c < 1$, since the total number of vertices in $G_n$ will be shown to be $pn(1 + o_n(1))$ whp.
\begin{theorem}\label{thm3}
Condition on $\overline{\mathcal{D}}$.
\begin{enumerate}[(i)]
\item If $m = 1$, the largest component of $G_n$ has size $O(\D\log n)$ with high probability, where $\D$ is the maximum degree of $G_n$.

\item If $m \geq 2$, there exists a constant $\xi > 0$ such that with high probability the number of isolated vertices is $\xi pn$, the largest component contains at least $(1-\xi)(1-(13/14)^{m-1}) pn$ vertices, and all other components have size $O(\log n)$. If $p > p_0$ then $\xi = O_m(c^m)$ for some $0 < c < 1$.
\end{enumerate}
\end{theorem}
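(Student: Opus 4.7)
The plan is to exploit the forest structure when $m = 1$ and combine an isolated-vertex calculation with a sprinkling argument when $m \geq 2$.

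\textbf{Part (i).} For $m = 1$ each added vertex contributes exactly one edge and deletions only remove edges, so $G_n$ is a forest. I would root each component at its oldest vertex; the size of a component is then the number of descendants reached from the root via surviving parent-pointers. Since the number of surviving pointers at time $n$ is approximately $(2p-1)n$ while the number of vertices is approximately $pn$, the fraction of vertices that still have a living pointer is $(2p-1)/p < 1$, so after the first layer the offspring count in the descendant tree can be dominated by a subcritical Galton--Watson process. A standard maximum-tree-size estimate then gives that each subtree hanging off an internal node has size $O(\log n)$ whp, and the root may spawn at most $\D$ first-layer subtrees, yielding the bound $O(\D \log n)$.

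\textbf{Part (ii).} For $m \geq 2$ I would establish the three claims in turn. The isolated-vertex count follows from Theorem \ref{thm2}: with $X_0(n) = x_0 n (1+o(1))$ and $x_0 = p \cdot \Prob{d_1(n,v) = 0}^m$ (using the independence in Theorem \ref{thm1}), setting $\xi = x_0 / p$ gives the stated linear count, and uniform boundedness of the geometric mass $q(n,v)$ in the power-law regime ($p > p_0$) yields $\xi = O_m(c^m)$. For the giant component I would use a sprinkling argument: designate one edge of each vertex as ``primary'' and the remaining $m-1$ as ``auxiliary''. The primary subgraph is distributed approximately as the $m=1$ process, so by Part (i) its components have size $O(\log n)$. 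I would then identify a ``backbone'' of primary components whose aggregate degree is a constant fraction of the total. A pointwise lower bound on the attachment probability of an auxiliary edge to the backbone, say at least $1/14$, implies that any non-isolated vertex misses the backbone with all $m-1$ auxiliary edges with probability at most $(13/14)^{m-1}$. Aggregating across the $pn(1+o(1))$ non-isolated vertices yields the claimed lower bound on the giant component's size, while the $O(\log n)$ bound on other components follows from the subcritical branching argument reapplied in the sprinkled graph.

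The main obstacle will be justifying the near-independence of the auxiliary edges from the primary subgraph in the sprinkling step. Auxiliary edges are placed by preferential attachment and are therefore correlated with all prior structure through the degree sequence. I would handle this by conditioning on the degree sequence step by step: conditional on the current total degree and the backbone's share of it, each auxiliary edge is placed with explicit multinomial probabilities, and a concentration estimate for the backbone's degree fraction recovers the unconditional lower bound of $1/14$. A secondary difficulty is controlling the pointer-survival process precisely enough to invoke the branching-process tail bound in the presence of deletions; here I would rely on Azuma-type concentration of the number of surviving pointers combined with an explicit estimate of the offspring distribution near the root to propagate the subcritical bound uniformly over the process.
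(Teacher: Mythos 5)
Your part (i) correctly identifies the forest structure and the rooting of each component at a vertex whose out-edge has been deleted, but the subcritical Galton--Watson domination is not valid. The offspring of a vertex $v$ in the descendant tree is its in-degree in $G_n$, which by Theorem \ref{thm1} is approximately $G(p(\t),q(\t))$-distributed with mean $q(\t)/p(\t)$; for the oldest vertices of $G_n$ (those with $\t$ near $1$) this mean is roughly $e^{\a}-1$, which exceeds $1$ whenever $\a>\ln 2$, and $\a$ ranges over all of $(1/2,\infty)$. The global average in-degree $(2p-1)/p<1$ does not yield a per-vertex domination because the exploration backwards along in-edges preferentially lands on old, high-in-degree vertices. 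The paper circumvents this by modelling the entire set of descendants of each first-layer edge as a Crump--Mode--Jagers process of rate $2\a$ with lifetime $1$ run for time $\t_1\le 1+o(1)$, whose \emph{total} progeny is geometrically distributed with parameter $e^{-2\a\t_1}$ and hence $O(\log n)$ whp, even though the process is not subcritical generation by generation. Your argument as stated would need to be replaced by this (or by a multi-type analysis indexed by vertex age).

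In part (ii) the isolated-vertex count and the bound $\xi=O_m(c^m)$ via the uniform bound $1-q(\t)\le\zeta<1$ match the paper. The sprinkling argument for the giant component, however, has two genuine gaps. First, showing that a non-isolated vertex sends an auxiliary edge into a ``backbone'' of primary components does not place it in a single giant component unless you also prove that those $O(\D\log n)$-sized primary components are themselves merged into one by the auxiliary edges; this merging step is the heart of any sprinkling argument and is absent. Relatedly, the per-edge attachment probability $1/14$ is reverse-engineered from the statement: in the paper the constant $(13/14)^{m-1}$ is the gambler's-ruin extinction probability $(0.26/0.28)^{m-1}$ of a frontier random walk started at $m-1$ with increments $Z-1$, $\E{Z}=1.02$ (Lemma \ref{Xlemma}), not a union bound over $m-1$ independent trials with success probability $1/14$. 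Second, ``all other components have size $O(\log n)$'' cannot come from a subcritical branching argument, since for $m\ge 2$ the exploration is supercritical; in the paper it follows from the dichotomy that the positive-drift walk either dies within $O(\log n)$ rounds or survives to size $\Omega(n^{1/2+\e})$, after which all large components are shown to coincide by a direct edge-counting argument and the giant's linear size is obtained by a second-moment bound on the number of small vertices.
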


\addtocounter{theorem}{-3}

\subsection{Proof outline}\label{outlinesec}

The paper is laid out as follows. In Section \ref{modelsec} we define the graph process precisely and define constants and functions that are central to the main results. Section \ref{poissonsec} is devoted to Crump-Mode-Jagers processes, which will be the central tool in studying the graph process. Sections \ref{distributionsec}, \ref{degseqsec} and \ref{giantsec} are devoted to proving Theorem \ref{thm1}, \ref{thm2} and \ref{thm3} respectively.

We will now outline the proof of Theorem \ref{thm1}. Theorem \ref{thm2} is an elementary consequence of Theorem \ref{thm1}, and the proof of Theorem \ref{thm3} is heavily based on the method used to prove Theorem \ref{thm1}.

In Section \ref{mastersec} we will define a {\em master graph} $\G$ which contains $G_t$ for all $t$. We will mainly be proving results for $\G$ and show how they transfer to $G_n$, but for this informal outline we will avoid the somewhat technical definition of $\G$ and show the idea behind the proofs.

We begin by describing the Crump-Mode-Jagers process (or CMJ process). The name Crump-Mode-Jagers applies to a more general class than what is considered here, but we will mainly be talking about the special case described as follows. Fix a constant $\a > 0$ and consider a Poisson process $\mP_0$ with rate $\a$ on $[0, 1)$. Suppose $\mP_0$ has arrivals at time $\t_{01} < \t_{02} < \dots < \t_{0k}$. The $j$th arrival gives rise to a Poisson process $\mP_{0j}$ on $[\t_{0j}, \t_{0j} + 1)$, $j = 1,\dots, k$, independent of all other Poisson processes. In general, let $s = 0\dots$ be a string of integers starting with $0$ and suppose $\mP_s$ is a Poisson process on $[\t_s, \t_s + 1)$. Then the $j$th arrival in $\mP_s$, at time $\t_{sj}$, gives rise to a Poisson process $\mP_{sj}$ on $[\t_{sj}, \t_{sj} + 1)$. Here $sj$ should be interpreted as appending $j$ to the end of the string $s$. Let $d(\t)$ be the number of processes alive at time $\t$, i.e. the number of processes $\mP_s$ with $\t_s\in (\t-1, \t]$. Lemma \ref{CMJlemma} will show that for fixed $\t$, $d(\t)$ is either zero or geometrically distributed.

We will now explain how the degree of a vertex in the graph process relates to a CMJ process. Firstly, note that choosing a random vertex with probability proportional to degrees is equivalent to choosing an edge $e$ uniformly at random, and choosing one of the two endpoints of $e$ uniformly at random. We will refer to this as choosing a {\em half-edge} $(e, \ell)$ where $\ell\in\{1,2\}$. If $e = \{v, w\}$ is added along with vertex $v$, we say that choosing $(e, 1)$ corresponds to choosing $w$ {\em via} $e$, and choosing $(e, 2)$ corresponds to choosing $v$ via $e$. This is described in detail in Section \ref{modelsec}.

For the purpose of demonstration consider the case $m = 1$, i.e. the case in which exactly one edge is added along with any vertex added to the graph. It will follow from Lemma \ref{sigmalemma} that if a vertex $v_0$ is added along with an edge $e_0$ at time $t_0$ then with high probability $e_0$ is removed at time $\g t_0 + o(t)$, where $\g = p / (1-p)$. Note that the degree of $v_0$ may still be non-zero after the removal of $e_0$. If the degree of $v_0$ is to increase from its initial value $1$, then there must exist a time $T_{01}$ with $t_0 < T_{01} < \g t_0 + o(t_0)$ at which a vertex $v_{01}$ is added along with edge $e_{01}$, where $e_{01}$ is randomly assigned to $(e_0, 2)$. The time $T_{01}$ is random and we will see (equation \eqref{nearexp}) that $\log_\g (T_{01} / t_0) \in (0, 1+o(1))$ is approximately exponentially distributed with rate $\a = \a(p)$. Furthermore, if $T_{01} < T_{02} < \dots < T_{0k}$ denote the times at which a vertex is added that chooses $v_0$ via $e_0$, then the sequence $(\log_\g(T_{0i} / t_0))$ can be approximated by a Poisson process with rate $\a$ on the interval $(0, 1)$. Let $e_{01}$ denote the edge that is added at time $T_{01}$ and chooses $(e_0, 2)$ (if such an edge exists). Then the degree of $v_0$ may increase by some edge $e_{011}$ added at time $T_{011}$ with $T_{01} < T_{011} < \g T_{01} + o(T_{01})$ choosing $(e_{01}, 1)$, i.e. choosing $v_0$ via $e_{01}$. As above, the sequence of times $T_{011}, T_{012}, \dots, T_{01\ell}$ at which a vertex is added that chooses $v_0$ via $e_{01}$ are such that $(\log_\g(T_{01i} / T_{01}))$ approximately follows a Poisson process on $(\log_\g T_{01}, 1 + \log_\g T_{01})$. Repeating the argument, any edge incident to $v_0$ gives rise to a Poisson process, and as long as the degree of $v_0$ is not too large the processes are ``almost independent''. Under the time transformation $\t(t) = \log_\g (t / t_0)$, the times at which the degree of $v_0$ increases or decreases can be approximated by the times at which $d(\t)$ increases or decreases in a CMJ process with rate $\a$. This approximation is made precise in the proof of Theorem \ref{degthm}, and shows that the degree of a vertex is either zero or approximately geometrically distributed.

Now suppose $m > 1$. Then each of the $m$ edges added along with $v$ gives rise to a CMJ process by the argument above, and the processes are ``almost independent''. The degree of $v$ will be approximated by a sum of $m$ independent random variables that are each either zero or geometrically distributed.

\section{The model} \label{modelsec}

Fix $m \in \mathbb{N}$ and $1/2 < p < 1$. Let $\mG_m$ be the class of undirected graphs on $[\nu_G] = \{1,\dots, \nu_G\}$ for some integer $\nu_G$ such that if edges are oriented from larger integers to smaller, there exists some integer $1_G$ with $m < 1_G \leq \nu_G$ such that a vertex $v$ has out-degree $m$ if $v \geq 1_G$ and out-degree zero if $v < 1_G$. All graphs we deal with will be in $\mG_m$. In some places it will be convenient to think of graphs as being directed, in which case we always refer to the orientation from larger to smaller integers. We will allow parallel edges but no self-loops.

Our graph $G$ will be defined by $G = G_n$ for some graph sequence $(G_t)$ and some $n$ that grows to infinity. Each $G_t$ will be in $\mG_m$, and we write $1_t = 1_{G_t}, \nu_t = \nu_{G_t}$. Given $G_t$, we randomly generate $G_{t + 1}$ as follows. With probability $1 - p$, remove all $m$ edges oriented out of $1_t$, so that $1_{t + 1} = 1_t + 1$. Note that edges oriented into $1_t$ remain in $G_{t + 1}$. With probability $p$, add vertex $\nu_{t+1} = \nu_t + 1$ along with $m$ edges to distinct vertices, where vertices are picked with probability proportional to their degree with replacement. In other words, if $d(t, v)$ denotes the degree of vertex $v$ in $G_t$, then $\nu_{t+1}$ is added along with edges $(\nu_{t+1}, v_i)$ where $v_1,\dots, v_m$ are independent with
$$
\Prob{v_i = v} = \frac{d(t-1,v)}{e(G_{t-1})}
$$
where $e(G_{t-1})$ denotes the number of edges in $G_{t-1}$. Rather than using the terminology of $\nu_{t+1}$ choosing $v_1,\dots, v_m$, we will say that the $m$ edges $e_1,\dots, e_m$ added at time $t+1$ choose $v_1,\dots,v_m$ respectively. Let $d^+(t, v), d^-(t, v)$ denote the out- and in-degree of $v$ in $G_t$ in the natural orientation. Write $d(t,v) = 0$ if $v\notin G_t$. The issue of the empty graph appearing in the process is addressed shortly.

We will assume that the graph process starts with some graph $H\in \mG_m$ on $\nu = o(n^{1/2})$ vertices, and we label this graph $G_{t_0}$ where $t_0 = 1_H + \nu_H$ in order to  maintain the identity $1_t + \nu_t = t$ for every $G_t$, $t_0 \leq t \leq n$. Let $\s \in \{0, 1\}^{n-t_0}$ be such that $\s(u)$ is the indicator for if a vertex and $m$ edges are added at time $u + t_0$, or if $m$ edges are removed at time $u+t_0$. Then $\nu_t = \nu_H + \sum_{u = t_0 + 1}^t \s(u)$ for all $t > t_0$, and $1_t = t - \nu_t$. The entries $\s(u)$ are independent and $\s(u) = 1$ with probability $p$. Say that $\s$ is {\em feasible} if it is such that $\nu_t > 1_t$ for all $t > t_0$, noting that $\{\s$ is feasible$\} = \overline{\mathcal{D}}$ with $\mathcal{D}$ as in Section \ref{introsec}. For a function $\om = \om(n)$ such that $\om \to\infty$ as $n\to\infty$, We say that $\s$ is {\em $\om$-concentrated} if $|\nu_t - pt| \leq t^{1/2}\ln t$ for all $t\geq n/\om$. Note that if $\s$ is $\om$-concentrated then $|1_t - (1-p)t| \leq t^{1/2}\ln t$ and $|e(G_t)-m(2p-1)t| \leq mt^{1/2}\ln t$ for all $t\geq n/\om$. Furthermore, if an edge $e$ is added at time $t\geq n/\om$ then it is removed at time $pt / (1-p) + O(t^{1/2}\ln t)$.
\begin{lemma}\label{sigmalemma}
Let $\om = \om(n) \to \infty$ with $n$. If the graph process is initiated at $H\in\mG_m$ on $\nu_H \leq \om^{-1}n^{1/2}$ vertices and $\nu_H - 1_H = N$, then $\s$ is feasible with probability $1 - O(c^N)$, i.e. $\Prob{\mathcal{D}} = O(c^N)$, for some constant $c\in (0, 1)$. Furthermore, $\s$ is $\om$-concentrated with probability $1-O(n^{-C})$ for any $C > 0$.
\end{lemma}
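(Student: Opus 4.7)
The plan is to reduce both claims to standard facts about the biased random walk $W_t := \nu_t - 1_t$. From the identity $\nu_t + 1_t = t$ and the dynamics, $(W_t)_{t \geq t_0}$ is a random walk on $\mathbb{Z}$ started at $W_{t_0} = N$, with $W_{t+1} - W_t = +1$ with probability $p$ (when a vertex is added) and $W_{t+1} - W_t = -1$ with probability $1-p$ (when the $m$ out-edges of $1_t$ are deleted), independently across steps. The event $\mathcal{D}$ occurs iff $W_t$ ever drops to $0$, since at any other time some vertex $v \geq 1_t$ still carries its full $m$ out-edges and thus the graph has at least $m$ edges.

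For the feasibility claim, we are estimating the ruin probability of a random walk with positive drift $2p-1 > 0$ starting from $N > 0$. By optional stopping applied to the bounded martingale $M_t := ((1-p)/p)^{W_t}$ at the first hitting time of $0$, the probability of eventual ruin equals $((1-p)/p)^N$. Setting $c := (1-p)/p \in (0, 1)$ gives $\Prob{\mathcal{D}} \leq c^N = O(c^N)$.

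For the $\om$-concentration claim, write $\nu_t = \nu_H + \sum_{u = t_0 + 1}^{t} \s(u)$, a sum of at most $t$ i.i.d.\ Bernoulli($p$) variables plus a deterministic term bounded by $\om^{-1} n^{1/2}$. Since $\om \to \infty$, this deterministic term is $o(t^{1/2})$ uniformly for $t \geq n/\om$, so $\Exp[\nu_t] = pt + o(t^{1/2})$. Hoeffding's inequality then gives, for each such $t$,
\beq{sigma-chern}{\Prob{|\nu_t - pt| > t^{1/2} \ln t} \leq 2\exp\!\left(-\Omega(\ln^2 t)\right).}
Since $t \geq n/\om$ we have $\ln t = \Omega(\ln n)$ (in the applications of the lemma one has $\om = o(\log n)$, so in fact $\ln t = (1 - o(1))\ln n$), and the right-hand side of \refer{sigma-chern} is $o(n^{-(C+1)})$ for every fixed $C > 0$. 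A union bound over the $O(n)$ relevant values of $t$ completes the proof.

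No step presents a serious obstacle; the only minor point requiring care is the choice of the deviation width $t^{1/2} \ln t$, large enough that \refer{sigma-chern} survives the union bound yet small enough to be useful downstream when $\om$ grows only very slowly. The assertions about $1_t$, $e(G_t)$, and typical edge lifetimes stated immediately before the lemma are then automatic, since $1_t = t - \nu_t$ and $e(G_t) = m(W_t + 1)$ are exact affine functions of $\nu_t$.
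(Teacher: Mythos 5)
Your proof is correct and takes essentially the same route as the paper: the feasibility claim is the ruin probability of the biased $\pm 1$ walk $\nu_t - 1_t$ started at $N$ (the paper simply cites a standard gambler's-ruin reference for the bound you derive explicitly via optional stopping), and the concentration claim is Hoeffding's inequality applied to $\sum_u \s(u)$ followed by a union bound over $t \geq n/\om$, exactly as in the paper.
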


\begin{proof}
Recall that $\s(t) = 1$ with probability $p$ and $\s(t) = 0$ otherwise. The difference $\nu_t - 1_t$ is a random walk, and the fact that $\nu_t\geq 1_t$ for all $t\geq t_0$ with probability $1-O(c^N)$ for some $c\in (0, 1)$ is well known (see e.g. \cite[Section 5.3]{grimmett}).

Suppose $t\geq n/\om$. Then $t-t_0 \geq n / 2\om$ and by Hoeffding's inequality \cite{hoef}, since $pt_0 - \nu_H = o((n/\om)^{1/2}) = o(t^{1/2}\ln t)$,
\begin{align*}
\Prob{\nu_t - pt > t^{1/2}\ln t} = \Prob{\left(\sum_{u=t_0+1}^t \s(u)\right) - p(t-t_0) > pt_0 - \nu_H + t^{1/2}\ln t} = e^{-\Omega(\ln^2 t)}
\end{align*}
Summing over $t = n/\om, \dots, n$ shows that $\nu_t \leq pt + t^{1/2}\ln t$ for all $t\geq n/\om$ whp, and similarly $\nu_t \geq pt - t^{1/2}\ln t$ for all $t\geq n/\om$ whp.
\end{proof}

\subsection{The master graph}\label{mastersec}

The above description of $G_t$ is limited in that it forces one to generate the graph on-line, i.e. vertices need to make their random choices in a fixed order. Conditioning on $\s$ we can define an off-line graph $\G$ which contains $G_t$ for all $t$. This graph enables us to generate small portions of the graph without revealing a large part of the probability space.

Fixing a feasible $\s$ we define a {\em master graph} $\G = \G_n^\s(H)$ which has $G_t$ as a subgraph (in the sense that $G_t$ can be obtained from $\G$ by removing edges and possibly vertices) for all $t_0 \leq t \leq n$. There are two key observations that allow the construction. Firstly, if $\s$ is fixed, then $\nu_t = \nu_H + \sum_{u = 1}^{t-t_0}\s(u)$ is known for all $t_0 \leq t \leq n$. This means that $1_t = t - \nu_t$ is known, and we know that the $m(\nu_t - 1_t + 1)$ edges in $G_t$ are those added along with $1_t, 1_t + 1,\dots,\nu_t$, for all $t_0\leq t\leq n$. Secondly, suppose a vertex $v$ is added along with edges $e_1,\dots, e_m$ at time $t > t_0$. Rather than using the terminology of $v$ choosing $m$ vertices $v_1,\dots,v_m\in G_{t-1}$ with probability proportional to their degrees, we will adopt the terminology of the edges $e_i$ independently choosing edges $f_i\in G_{t-1}$ uniformly at random, then choosing one of the two endpoints of $f_i$ uniformly at random. To make this formal, let $E_{e}^\s$ be the edges that are in the graph when the edge $e$ is added, noting that if $e$ is added at time $t$ then $E_e^\s = \{m(1_{t-1} - 1) + 1, \dots, m\nu_{t-1}\}$ with $1_{t-1}, \nu_{t-1}$ determined by $\s$. Then each $e_i$ independently chooses an $f_i\in E_{e_i}^\s$ uniformly at random, along with $j_i\in [2]$ chosen uniformly at random. If $f_i = \{u, u'\}$ with $u' < u$, then $e_i$ choosing $(f_i, 1)$ means $e_i$ chooses $u'$, and $(f_i, 2)$ means choosing $u$. We say that $f_i$ chooses $u$ (or $u'$) {\em via} $f_i$. We call a pair $(e, j)$ with $j\in [2]$ a {\em half-edge}.

Suppose the graph process is initiated with some graph $G_{t_0} = H \in \mG_m$ on $[\nu_H]$ with $1_H + \nu_H = t_0$. We will introduce an integer labelling $L(e)$ for the edges $e$ in $\G$. The $L$ will be dropped from calculations and we write $e_1 \geq e_2$ to mean $L(e_1) \geq L(e_2)$ and $f(e) = f(L(e))$ whenever $f$ is a function on the integers. The labelling is defined by labelling the $m$ edges along with $v > \nu_H$ by $m(v-1) + 1, m(v-1) + 2,\dots, mv$. The edges in the initial graph $H$ can be oriented in such a way that vertices $1, \dots, 1_H - 1$ have out-degree zero, and $1_H, \dots, \nu_H$ have out-degree $m$. We can then label the edges in $H$ by $m(1_H - 1) + 1, \dots, m\nu_H$ in such a way that $1_H \leq v\leq \nu_H$ is incident with edges $m(v-1) + 1,\dots, mv$. Note that under this labelling, every edge $e$ is incident with vertex $\lceil e / m\rceil$ while its other endpoint $v(e)$ will satisfy $v(e) < \lceil e / m\rceil$.

{\bf Definition of $\G$:} Fix a feasible $\s$ and a graph $H\in \mG_m$. We define $\G = \G_n^\s(H)$ as follows. The vertex set is $[\nu_n]$ where $\nu_n = \nu_H + \sum_{i = 1}^{n - t_0} \s(i)$. The graph $\G$ contains $H$ as an induced subgraph on $[\n_H]$. Every edge $e > m\nu_H$ is associated with a set $\OM(e) = E_e^\s\times [2]$, and makes a random choice $\f(e) = (f(e), j(e)) \in \OM(e)$ uniformly at random, independent of all other edges. One endpoint of $e$ is $\lceil e / m\rceil$ (the fixed endpoint) and one is $v(e)$ (the random endpoint). If $j(e) = 2$ then $v(e) = \lceil f(e) / m \rceil$. If $j(e) = 1$ then $v(e) = v(f(e))$.

Note the recursion in defining the random endpoint $v(e)$ of an edge $e$. If $j(e) = 1$ and $j(f(e)) = 1$ then $v(e) = v(f(e)) = v(f(f(e)))$, and so on until either $j(f^{(k)}(e)) = 2$ for some $k$, or $f^{(k)}(e)\leq m\nu_H$ for some $k$, in which case $v(e) = v(f^{(k)}(e))$ is determined by $H$. Here $f^{(k)}$ denotes $k$-fold composition of $f$.

We will generate $\G$ carefully by keeping a close eye on the sets $\OM(e)$. Let $\G_0$ be the graph in which no randomness has been revealed; in $\G_0$ only the edges in $H$ are known, all other edges are free, and all sets $\OM(e) = \OM_0(e) = E_e^\s\times [2]$. For sets $A \subseteq \{m\nu_H + 1, m\nu_H + 2, \dots, m\nu_n\}$ of free edges and $R \subseteq \{m(1_H - 1) + 1, m(1_H - 1) + 2, \dots, m\nu_n\} \times [2]$ of half-edges, define a class $\mG(A, R)$ of {\em partially generated}  graphs as follows. We say that $\widetilde{\G} \in \mG(A, R)$ if (i) for $e > m\n_H$, $\f(e)$ is known if and only if $e\in A$, and (ii) for all $e\notin A$ we have $\OM(e) \supseteq \OM_0(e)\setminus R$. In other words, if $(f, j)\in R$ then for each $e$ with $(f,j)\in \OM_0(e)$, we may have determined that $\f(e)\neq (f,j)$.

Given a partially generated $\widetilde{\G} \in \mG(A, R)$, we define two operations that reveal more information about $\G$. We say that we {\em assign} $e \notin A$ when we choose $\f(e)$ uniformly at random from $\widetilde{\OM}(e) = \OM_0(e) \setminus R$. For any $(f, j)$ we can {\em reveal} $(f, j)$ to find the $\f^{-1}(\{f, j\}) \setminus A$ of edges $e\notin A$, free in $\widetilde{\G}$, that choose $(f, j)$. We reveal $(f, j)$ as follows. For every edge $e\notin A$ with $(f, j) \in \widetilde{\OM}(e)$, set $\f(e) = (f, j)$ with probability $1 / |\widetilde{\OM}(e)|$, and otherwise remove $(f, j)$ from $\widetilde{\OM}(e)$. 

Starting with $\G_0 \in \mG(\emptyset, \emptyset)$ (this class contains only one graph), we can generate $\G$ by a sequence of {\em assigns} and {\em reveals}. Given $\G_i \in \mG(A_i, R_i)$, we can assign $e\notin A_i$ to form $\G_{i+1} \in \mG(A_i \cup \{e\}, R_i)$, and we set $A_{i + 1} = A_i \cup \{e\}$ and $R_{i + 1} = R_i$. If $(f, j)\notin R_i$ is revealed and $e_1,\dots, e_k$ are the edges that choose $(f, j)$, we get $\G_{i+1}\in \mG(A_i \cup \{e_1, e_2,\dots, e_k\}, R_i \cup \{(f, j)\})$, and we set $A_{i + 1} = A_i \cup \{e_1,\dots, e_k\}$ and $R_{i+1} = R_i \cup \{(f, j)\}$. We get a sequence $\G_0, \G_1,\dots$ where $\G_i \in \mG(A_i, R_i)$ and $A_i \subseteq A_{i+1}$ and $R_i \subseteq R_{i+1}$ for all $i$.

Note that in a partially generated graph, if $\f(e) = (f(e), 1)$ where $f(e)$ is free, then we know that $v(e) = v(f(e))$, but $v(f(e))$ is not yet determined. We say that $e$ is committed to $f(e)$. This can be pictured by gluing the free end of $e$ to the free end of $f(e)$. At a later stage when $f(e)$ is attached to the its random endpoint $v(f(e))$, the edge $e$ will follow and be attached to the same vertex.


We will condition on $\s$ being $\om$-concentrated for some $\om$ in the proofs to follow. In $\G$, this translates to each $e$ with $e \geq mn/\om$ having $E_e^\s = \{e/\g+ O(n^{1/2}\ln n), \dots, m(\lceil e/m\rceil - 1)\}$, where $\g = p/(1-p)$. Note in particular that $|E_e^\s| = e(1 - 1/\g) + O(n^{1/2}\ln n)$. Note also that for any edge $e\geq mn/\om$, the largest $f$ for which $e\in E_f^\s$ is $f = \g e + O(n^{1/2}\ln n)$.

\subsection{Constants and functions}\label{confun}

In this section we collect constants and functions that will be used throughout the remaining sections. Fixing $p$ and $m$, we define
$$
\m = m(2p - 1), \quad \g = \frac{p}{1 - p}, \quad \a = \frac{pm}{2\m}\ln \g = \frac{p}{4p-2}\ln\g.
$$
The constant $\a$ will play a central role in what follows. We note that it is independent of $m$, and viewed as a function of $p\in(1/2, 1)$ it is continuously increasing and takes values in $(1/2, \infty)$. Let $p_0 \approx 0.83113$ be the unique $p$ for which $\a = 1$. When $\a \neq 1$ define $\zeta$ as the unique solution in $\mathbb{R} \setminus\{1\}$ to
$$
\zeta e^{\a(1-\zeta)} = 1.
$$
Also let $\eta = -\ln \g / \ln \zeta$ if $\a > 1$. If $\a < 1$ then $\eta$ is undefined.

Define a sequence $a_k$ by $a_0 = 1$ and
$$
a_k = \left(-\frac{e^\a}{\a}\right)\left(\frac{a_0}{(k-1)!} + \frac{a_1}{(k-2)!} + \dots + a_{k-1}\right) = \left(-\frac{e^\a}{\a}\right)\sum_{j=0}^{k-1} \frac{a_{j}}{(k-j-1)!}, \ k \geq 1.
$$
For $k \geq 0$ define functions $Q_k : [k, k + 1) \to [0, 1]$ by
$$
Q_k(\t) = \sum_{j = 0}^k \frac{a_j}{(k-j)!} (\t - k)^{k-j},
$$
and for $\t \geq 0$ let $Q(\t) = Q_{\lfloor \t\rfloor}(\t)$. We note that $Q(\t)$ is discontinuous at integer points $k$ with
$$
Q(k) = a_k \quad \mbox{and} \quad Q(k^-) = -\a e^{-\a} a_k
$$
where $Q(k^-)$ denotes the limit of $Q(\t)$ as $\t \to k$ from below. Define
$$
q(\t) = 1, \ 0 \leq\t<1, \quad q(\t) = 1 + \frac{Q(\t-1)}{\a Q(\t)}, \ \t \geq 1.
$$
Finally, define
$$
p(\t) = \exp\left\{-\a\int_0^\t q(x)dx\right\}.
$$
For $\t < 0$ we define $Q(\t) = q(\t) = p(\t) = 0$.

In Section \ref{giantsec} we will need explicit formulae for $q(\t)$ for $0\leq \t \leq 3$. We have $a_0 = 1$, $a_1 = -e^\a / \a$ and $a_0 = e^{2\a}/\a^2 - e^\a / \a$, so if $0 \leq \t < 1$ then $Q(\t) = 1$, $Q(\t + 1) = \t - e^\a / \a$ and $Q(\t + 2) = \frac12 \t^2 - \a^{-1}e^\a \t + e^{2\a}/\a^2 - e^\a / \a$, and
$$
q(\t) = 1, \quad q(\t + 1) = 1 - \frac{1}{e^\a - \a\t}, \quad q(\t + 2) = 1 - \frac{e^\a - \a\t}{e^{2\a} - (\t + 1)\a e^\a + \frac12 \a^2\t^2}, \ 0\leq \t < 1.
$$

The following lemma collects properties of the constants and functions presented here. Its proof is postponed to Appendix \ref{constlemproofsec}.

\begin{restatable}{lemma}{constlem}\label{constlem}
\begin{enumerate}[(i)]
\item \label{zetalemma} If $\a > 1$ then $\zeta < \a^{-1}$ and if $\a < 1$ then $\zeta > 1 - \a^{-1} + \a^{-2} > \a^{-1}$.
\item \label{etalemma} If $\a > 1$ then $\eta > 2$.
\item \label{decreasing} The functions $p(\t), q(\t)$ are decreasing and take values in $[0, 1]$.
\item \label{Qeat} For any non-integer $\t > 0$,
$$
Q'(\t) = Q(\t-1) \quad \mbox{and} \quad q(\t) = \frac1\a \frac{(Q(\t)e^{\a\t})'}{Q(\t) e^{\a\t}}.
$$
\item \label{smallalpha} If $\a < 1$ then there exist constants $\l_1, \l_2 > 0$ where $\l_1 < \a$ such that for all $\t \geq 0$,
$$
p(\t) = 1 - \alpha + \frac{\l_1}{\zeta^\t} + O(\zeta^{-2\t})
\quad\mbox{and}\quad
q(\t) = \frac{\l_2}{\zeta^\t} + O(\zeta^{-2\t}).
$$
\item \label{largealpha} If $\a > 1$ then there exist constants $\l_3, \l_4 > 0$ and a constant $C > 0$ such that for all $\t\geq 0$,
$$
\l_3 \zeta^\t \leq p(\t) \leq \l_3\zeta^\t + C \zeta^{2\t}
\quad\mbox{and}\quad
q(\t) = 1 - \zeta + \l_4 \zeta^\t + O(\zeta^{2\t}).
$$
\end{enumerate}
\end{restatable}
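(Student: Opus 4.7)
The plan is to prove the six parts in the order (iv), (i), (ii), (iii), (v)--(vi), since the differentiation identity in (iv) is the backbone of most of what follows, the transcendental analysis of $g(x)=xe^{\a(1-x)}$ takes care of (i) and (ii), the sign structure of $Q$ gives (iii), and the asymptotic parts (v)--(vi) form the main obstacle and need everything before them as input. Part (iv) is a direct computation: differentiating $Q_k(\t)=\sum_{j=0}^k a_j(\t-k)^{k-j}/(k-j)!$ termwise on $(k,k+1)$ and reindexing $k-j\mapsto (k-1)-j$ reproduces $Q_{k-1}(\t-1)$, and the second identity follows from $(Qe^{\a\t})'/(Qe^{\a\t})=Q'/Q+\a$ combined with $Q'=Q(\cdot-1)$.

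For (i) I would study $g(x)=xe^{\a(1-x)}$: $g$ vanishes at $0$, equals $1$ at $x=1$, strictly increases on $(0,1/\a)$ up to the maximum $e^{\a-1}/\a>1$, and strictly decreases on $(1/\a,\infty)$ back down to $0$. Hence $g(x)=1$ has exactly two roots, $1$ and $\zeta$, on opposite sides of $1/\a$, giving $\zeta<1/\a$ for $\a>1$ and $\zeta>1/\a$ for $\a<1$. The sharper bound $\zeta>1-\a^{-1}+\a^{-2}$ in the $\a<1$ case reduces, via monotonicity of $g$ on $(1/\a,\infty)$, to the single inequality $g(1-\a^{-1}+\a^{-2})>1$; substituting $y=1-\a^{-1}\in(-1,0)$ (using $\a>1/2$, which is automatic since $\a=p\ln\g/(4p-2)$ and $p>1/2$), this becomes $h(y):=(1-y+y^2)e^y-1>0$, which I would verify from $h(0)=0$ and $h'(y)=ye^y(y+1)<0$ on $(-1,0)$. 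For (ii), $\eta>2$ is equivalent (after multiplying through by $\ln\zeta<0$) to $\g\zeta^2>1$, i.e.\ $\zeta>1/\sqrt{\g}$, and since $\a>1$ places $1/\sqrt{\g}$ in the increasing part of $g$ it suffices to check $g(1/\sqrt{\g})<1$; this reduces after using $\a=p\ln\g/(4p-2)$ to the single inequality $\g>1$.

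Part (iii) I would handle by first proving by induction on $k$ that $Q$ has constant sign $(-1)^k$ throughout $[k,k+1)$. The base case $Q\equiv 1$ on $[0,1)$ is trivial; in the inductive step, (iv) forces $Q'$ on $[k,k+1)$ to have sign $(-1)^{k-1}$, while $Q(k)=a_k$ and $Q(k^-)=-\a e^{-\a}a_k$ have opposite signs, so the monotone behaviour of $Q$ on $[k,k+1)$ is compatible with a single sign provided the $a_k$ themselves alternate, which follows from an easy induction on their defining recursion. With the sign pattern in hand, $Q(\t-1)$ and $Q(\t)$ have opposite signs, giving $q(\t)=1+Q(\t-1)/(\a Q(\t))<1$; the lower bound $q\geq 0$ amounts to $|Q(\t-1)|\leq\a|Q(\t)|$, equivalently to $R:=Qe^{\a\t}$ and $R'$ having the same sign on each $[k,k+1)$ (since $q=R'/(\a R)$), which is checked in the same induction by noting $R'=(\a Q+Q(\cdot-1))e^{\a\t}$ and comparing magnitudes. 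Monotonicity of $q$ inside each interval comes from differentiating its explicit rational form, and the downward jumps at each integer can be read off directly from the formulas for $q(k^-)$ and $q(k)$; since $q\geq 0$, $p=\exp(-\a\int_0^\t q)$ is automatically decreasing and takes values in $(0,1]$.

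The main obstacle is (v)--(vi), which require sharp asymptotics. My plan is to pass to the ordinary generating function $A(z)=\sum_k a_k z^k$: the recursion for $a_k$ translates into the closed form $A(z)=(1+(e^\a/\a)ze^z)^{-1}$, whose singularities are the roots of $ze^z=-\a e^{-\a}$. The two real roots are $z=-\a$ and $z=-\a\zeta$; the second one can be checked directly, since $(\a\zeta)e^{-\a\zeta}=\a e^{-\a}$ is the same as $\zeta=e^{\a(\zeta-1)}$, the defining equation of $\zeta$. When $\a>1$ we have $\a\zeta<1<\a$, so $-\a\zeta$ is the dominant pole; a residue computation gives $a_k\sim c_1(-1)^k(\a\zeta)^{-k}$, and substituting into $Q_k(\t)$ and resumming the (essentially exponential) series yields $|Q(\t)|\asymp(\a\zeta)^{-\lfloor\t\rfloor}e^{-\a\zeta(\t-\lfloor\t\rfloor)}$, so $Q(\t-1)/Q(\t)\to-\a\zeta$ and $q(\t)\to 1-\zeta$, with the $\l_4\zeta^\t$ correction inherited from the subleading pole at $-\a$. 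When $\a<1$ the roles swap, $-\a$ is dominant, and the same procedure gives $q(\t)=O(\zeta^{-\t})\to 0$ with the leading coefficient $\l_2$ determined by the residue at $-\a\zeta$. In both regimes $p(\t)=\exp(-\a\int_0^\t q)$ is then obtained by integration, producing the stated limit $1-\a$ (for $\a<1$) or leading behaviour $\l_3\zeta^\t$ (for $\a>1$). The most technical step is extracting the constants $\l_1,\dots,\l_4$ precisely and controlling error terms across the sign alternation and jumps of $Q$; I expect this bookkeeping to occupy the bulk of the work.
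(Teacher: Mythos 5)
Most of your plan runs parallel to the paper's own proof. Parts (i) and (iv) are identical in method (and your explicit verification via $h(y)=(1-y+y^2)e^y-1$ with $h(0)=0$, $h'(y)=ye^y(1+y)<0$ correctly fills in the step the paper calls ``straightforward''). For (ii) your reduction of $\eta>2$ to $\zeta>\g^{-1/2}$, hence to $g(\g^{-1/2})<1$, which unwinds to $p>1/2$, is a clean and correct variant of the paper's algebraic check $\zeta+1-1/p=\g^{-1/\eta}-\g^{-1}>0$. For (v)--(vi) you and the paper both pass to the generating function $A(z)=(1+\a^{-1}ze^{\a+z})^{-1}$ and read off the asymptotics of $a_k$ from the two real poles $-\a$ and $-\a\zeta$; the only difference is that the paper evaluates $p$ and $q$ at integer arguments via $p(k)=((-\a)^ka_k)^{-1}$ and $q(k)=1-b_{k-1}/b_k$ and then transfers to all $\t$ using monotonicity (i.e.\ part (iii)), whereas you resum $Q_k(\t)$ directly; both work, but note that your route, like the paper's, leans on (iii).

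The genuine gap is part (iii). Your analytic argument rests on three magnitude inequalities that you assert but do not prove, and none of them is routine: (a) the sign alternation of $Q$ on $[k,k+1)$ requires ruling out a zero of $Q$ inside the interval, which amounts to the quantitative bound $\int_{k-1}^{k}|Q|<|a_k|$ (the ``easy induction on the recursion'' does not go through as stated, because $\sum_{j}a_j/(k-j-1)!$ is a sum of terms of alternating sign, so its sign is not determined by the inductive hypothesis alone); (b) $q\geq 0$ is equivalent to $|Q(\t-1)|\leq\a|Q(\t)|$, which is exactly the hard part of ``comparing magnitudes''; and (c) monotonicity of $q$ on each interval, after differentiating $q=1+Q(\t-1)/(\a Q(\t))$ and using $Q'=Q(\cdot-1)$, is equivalent to the log-concavity statement $Q(\t-2)Q(\t)\leq Q(\t-1)^2$, which does not follow from ``differentiating its explicit rational form.'' The paper sidesteps all of this: Lemma~\ref{CMJlemma} identifies $q(\t)$ with $\Prob{X>\t}$ for $X$ the extinction time of the CMJ process, so $q$ is automatically nonincreasing with values in $[0,1]$, and then $p(\t)=\exp\{-\a\int_0^\t q\}$ is decreasing and in $[0,1]$ because $q\geq 0$. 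I recommend you take that route for (iii); it also hands you the sign alternation of the $a_k$ for free, since $p(k)=((-\a)^ka_k)^{-1}>0$. (A small aside: $q$ is in fact continuous at integers, since $Q(k)/Q(k^-)=Q(k-1)/Q((k-1)^-)$, so there are no ``downward jumps'' to check.)
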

The proof of Lemma \ref{constlem} is postponed to Appendix \ref{constlemproofsec}.

\section{A Poisson branching process}\label{poissonsec}

We now define a process $\mC$, called a Crump-Mode-Jagers (or CMJ) process. The name Crump-Mode-Jagers applies to a more general class than what is considered here, but we will mainly be talking about the special case described as follows. Fix a constant $\a > 0$ and consider a Poisson process $\mP_0$ on $[0, 1)$. Suppose $\mP_0$ has arrivals at time $\t_{01} < \t_{02} < \dots < \t_{0k}$. The $j$th arrival gives rise to a Poisson process $\mP_{0j}$ on $[\t_{0j}, \t_{0j} + 1)$, $j = 1,\dots, k$, independent of all other Poisson processes. In general, let $s = 0***$ be a string of integers and suppose $\mP_s$ is a Poisson process on $[\t_s, \t_s + 1)$. Then the $j$th arrival in $\mP_s$, at time $\t_{sj}$, gives rise to a Poisson process $\mP_{sj}$ on $[\t_{sj}, \t_{sj} + 1)$. Here $sj$ should be interpreted as appending $j$ to the end of the string $s$. Let $d(\t)$ be the number of processes alive at time $\t$, i.e. the number of processes $\mP_s$ with $\t_s\in (\t-1, \t]$, and define $b(\t)$ to be the number of processes born before $\t$, i.e. the number of $s$ for which $\t_s \leq \t$.

For a random variable $X$ and $p,q\in [0,1]$, say that $X\sim G(p,q)$ if
$$
\Prob{X = k} = \left\{\begin{array}{ll}
1-q, & k = 0, \\
qp(1-p)^{k-1}, & k \geq 1.
\end{array}\right.
$$

\begin{restatable}{lemma}{CMJlemma}\label{CMJlemma}
For all $\t \geq 0$, $d(\t) \sim G(p(\t), q(\t))$.
\end{restatable}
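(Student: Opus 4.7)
My plan is to compute the probability generating function $F(\tau, z) := \mathbf{E}[z^{d(\tau)}]$ in closed form and match it to the PGF of $G(p(\tau), q(\tau))$. Conditioning on the arrivals of the root Poisson process $\mathcal{P}_0$ and using independence of the sub-CMJ trees together with the standard PGF of a Poisson point process, I obtain the integral identity
\begin{equation*}
F(\tau, z) \;=\; z^{\mathbf{1}[\tau<1]} \exp\!\left(\alpha \int_{\tau-1}^{\tau} (F(\sigma, z) - 1)\, d\sigma\right),
\end{equation*}
with the convention $F(\sigma, z) \equiv 1$ for $\sigma < 0$. Differentiating at non-integer $\tau$ gives the delay differential equation $F_\tau = \alpha F (F - F_{-1})$ with $F_{-1}(\tau, z) := F(\tau-1, z)$ and initial value $F(0, z) = z$; this DDE has a unique solution, obtained by integrating step by step on intervals $[k, k+1)$.

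Since $\widehat{F}(\tau,z) := 1 + q(\tau)(z-1)/(1-(1-p(\tau))z)$ is exactly the PGF of $G(p(\tau), q(\tau))$, it suffices to check that $\widehat{F}$ satisfies the same DDE and initial condition. The initial value holds because $p(0) = q(0) = 1$. On the base interval $[0,1)$ one has $F_{-1}\equiv 1$ and the DDE becomes $F_\tau = \alpha F(F-1)$; direct substitution of $\widehat{F}(\tau, z) = p z/(1-(1-p)z)$ with $p(\tau) = e^{-\alpha\tau}$ and $q\equiv 1$ confirms the equation.

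For $\tau \in (k, k+1)$ with $k \geq 1$, plugging $\widehat{F} = 1 + q(z-1)/u$ with $u := 1 - (1-p)z$ into the DDE, multiplying through by $u^2 u_{-1}$, and canceling the common factor $z - 1$, leaves an identity of two quadratic polynomials in $z$. It therefore suffices to verify three evaluations. At $z = 0$ the identity collapses to
\begin{equation*}
q_\tau \;=\; \alpha (1 - q)(q - q_{-1}),
\end{equation*}
which follows by differentiating $q = 1 + Q(\tau-1)/(\alpha Q(\tau))$ and using $Q'(\tau) = Q(\tau-1)$ from Lemma~\ref{constlem}(iv). Evaluations at $z = 1$ and at $z = 1/(1-p(\tau-1))$ (the pole of $F_{-1}$) both reduce, given the $z=0$ identity, to the single further relation
\begin{equation*}
p(\tau) \;=\; \bigl(1 - q(\tau)\bigr)\, p(\tau - 1).
\end{equation*}

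The main technical step is establishing this second identity. I would compute $p(\tau)/p(\tau-1) = \exp(-\alpha\int_{\tau-1}^{\tau} q)$, write $\alpha q = \alpha + (\ln|Q|)'$ on each open sub-interval via Lemma~\ref{constlem}(iv), and track the multiplicative jump $|Q(k^-)/Q(k^+)| = \alpha e^{-\alpha}$ of $|Q|$ at the unique integer $k\in(\tau-1, \tau)$. These jumps contribute precisely the factor needed so that $p(\tau)/p(\tau-1) = |Q(\tau-1)|/(\alpha\,|Q(\tau)|)$; since $q(\tau)\in[0,1]$ (Lemma~\ref{constlem}(iii)) forces $Q(\tau-1)$ and $Q(\tau)$ to have opposite signs, this equals $-Q(\tau-1)/(\alpha Q(\tau)) = 1 - q(\tau)$, as required. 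The sign/jump bookkeeping of $Q$ across integer boundaries is the only subtle part; the remaining algebraic verifications are routine.
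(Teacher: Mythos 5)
Your proposal is correct and is essentially the paper's argument: both start from the Crump--Mode--Jagers functional equation for the probability generating function and verify that the candidate $1+q(\t)(z-1)/(1-(1-p(\t))z)$ satisfies it, with everything hinging on the same key identity $p(\t)=(1-q(\t))p(\t-1)$, proved exactly as you outline by tracking the multiplicative jump of $Q$ at the integer in $(\t-1,\t)$. The only difference is cosmetic: the paper substitutes the candidate into the integral equation and evaluates the integrals directly, whereas you differentiate to a delay ODE and check a quadratic polynomial identity at three points (your three evaluations do reduce to $q_\t=\a(1-q)(q-q_{-1})$ and $p(\t)=(1-q(\t))p(\t-1)$, both of which hold). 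One small point to shore up in your route: the DDE is valid only at non-integer $\t$, so ``integrating step by step on $[k,k+1)$'' also requires fixing the value of $F$ at each integer from the functional equation itself --- in particular the jump $F(1,z)=F(1^-,z)/z$ when the root process dies --- and checking that $\widehat F$ has the matching jump (it does, since $q(1^-)=1$ while $q(1)=1-e^{-\a}$, and $F$ is continuous at integers $k\ge 2$); the paper's direct verification of the integral equation sidesteps this bookkeeping.
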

The proofs of Lemmas \ref{CMJlemma} and \ref{CMJbirthlem} are postponed to Appendix \ref{CMJproofsec}.
\begin{restatable}{lemma}{CMJbirthlem}\label{CMJbirthlem}
There exists a constant $\l > 0$ such that for $0 \leq \t \leq \log_\g n$, as $n\to \infty$
\begin{enumerate}[(i)]
\item if $\a < 1$,
$$
\Prob{b(\t) > \l \ln n} = o\bfrac1n.
$$
\item if $\a > 1$,
$$
\Prob{b(\t) > \l n^{1/\eta} \ln n} = o\bfrac1n
$$
where $\eta = - \ln\g / \ln\zeta > 2$.
\item If $\a\neq 1$ then $d(\t) \geq \lfloor b(\t) / (\l\log_\g^2 n)\rfloor$ for all $0\leq\t\leq\log_\g n$ with probability $1 - o(n^{-1})$.
\end{enumerate}
\end{restatable}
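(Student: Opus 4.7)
The plan is to handle (i) and (ii) via moment bounds on $b(\t)$, and to derive (iii) from those bounds combined with the identity $d(\t)=b(\t)-b(\t-1)$. For (i), the subcritical case $\a<1$, observe that every individual in the CMJ process produces $\mbox{Poisson}(\a)$ offspring over its lifetime, so if $Z_k$ denotes the $k$th generation size of the embedded Galton--Watson tree then $b(\t)\le T:=\sum_{k\ge 0}Z_k$, the total progeny. Because Poisson offspring has finite exponential moments and $\a<1$, classical subcritical branching theory (the Dwass / Borel--Tanner formula, or a direct generating-function estimate) gives $\Prob{T\ge k}\le C_1e^{-c_1 k}$ for some $c_1=c_1(\a)>0$. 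Choosing $\l>1/c_1$ makes $\Prob{b(\t)>\l\ln n}\le C_1 n^{-c_1\l}=o(1/n)$ uniformly in $\t$.

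For (ii), the supercritical case $\a>1$, the generational bound is too weak since it yields $n^{\log_\g\a}\gg n^{1/\eta}$, so I would exploit the timing of births. Since $b$ is a counting process with instantaneous rate $\a d(\t)$, we have $\Ex[b(\t)]=\a\int_0^\t \Ex[d(s)]\,ds$. By Lemma~\ref{CMJlemma}, $\Ex[d(s)]=q(s)/p(s)$, and by Lemma~\ref{constlem}(\ref{largealpha}) this is asymptotic to $C\zeta^{-s}$; integrating gives $\Ex[b(\log_\g n)]=O(n^{1/\eta})$. For concentration I would bound higher moments $\Ex[b(\t)^k]$ by iterating the same renewal structure (equivalently, by analysing the Malthusian martingale $\zeta^\t b(\t)$, which for Poisson offspring converges in $L^p$ for every $p$ by Kesten--Stigum-type results). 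This yields $\Ex[b(\t)^k]=O(n^{k/\eta})$, and Markov applied to the $k$th power gives $\Prob{b(\t)>\l n^{1/\eta}\ln n}=O((\ln n)^{-k})=o(1/n)$ for $k$ chosen sufficiently large.

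For (iii): in the subcritical case $\a<1$, part (i) gives $b(\t)\le\l\ln n<\l\log_\g^2 n$ whp, so $\lfloor b(\t)/(\l\log_\g^2 n)\rfloor=0$ and the statement is automatic. In the supercritical case $\a>1$, alive particles at time $\t$ are exactly those born in $(\t-1,\t]$, hence $d(\t)=b(\t)-b(\t-1)$; simultaneous concentration of $b(\t)$ and $b(\t-1)$ around their means $\sim C\zeta^{-\t}$ and $\sim C\zeta^{1-\t}$ (from the moment control of (ii)) forces $d(\t)\ge(1-\zeta)b(\t)/2$ whp, much stronger than the required $b(\t)/(\l\log_\g^2 n)$. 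Uniformity over $\t\in[0,\log_\g n]$ follows from a union bound over the polynomially many jump times of $b$ and $d$, whose number is itself controlled by (ii). Extinction before $\log_\g n$ must be handled separately: conditional on extinction, the process is equivalent to a subcritical branching process, so $b$ is bounded by a variable with exponential tails and the floor is zero on that event, matching $d(\t)=0$.

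The main obstacle I anticipate is producing the sharp moment bounds $\Ex[b(\t)^k]=O(n^{k/\eta})$ for (ii). The renewal recursion obtained by conditioning on the first birth is not closed in $k$, so one must either set up an induction that propagates the constant through $k$ levels of recursion, or invoke $L^p$-convergence of the Malthusian martingale together with the finite-exponential-moment property of Poisson offspring; the latter route is cleanest and directly produces $o(1/n)$ tails. A secondary, more routine, obstacle is making the union bound in (iii) clean enough that extinction, deep ``dips'' of $d$, and the discretisation error all contribute at most $o(n^{-1})$.
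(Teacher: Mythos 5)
Your part (i) is correct and takes a genuinely different (and arguably cleaner) route than the paper: dominating $b(\t)$ by the total progeny of a subcritical Galton--Watson tree with Poisson$(\a)$ offspring and using the exponential tail of the total progeny does give the stated $O(\ln n)$ bound with failure probability $o(n^{-1})$. The paper instead uses the exact geometric law of $d(k)$ at integer times (Lemma \ref{CMJlemma}), a union bound over the $O(\log n)$ integer times, and the pigeonhole bound $b(\t)\le\lceil\t\rceil\max_{k\le\lceil\t\rceil}d(k)$; the same two ingredients also dispatch part (ii) in the paper, since $p(k)\ge\l_3\zeta^k\ge\l_3 n^{-1/\eta}$ for $k\le\log_\g n$ makes the geometric tail $\Prob{d(k)>\l n^{1/\eta}\ln n}\le(1-\l_3 n^{-1/\eta})^{\l n^{1/\eta}\ln n}\le n^{-\l\l_3}$.

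In your part (ii) there is a genuine gap. The first-moment computation $\E{b(\log_\g n)}=O(n^{1/\eta})$ is fine, but the concluding step --- ``Markov applied to the $k$th power gives $\Prob{b(\t)>\l n^{1/\eta}\ln n}=O((\ln n)^{-k})=o(1/n)$ for $k$ sufficiently large'' --- is false: $(\ln n)^{-k}$ decays only polylogarithmically and is never $o(1/n)$ for any fixed $k$. To close this via moments you would need either exponential moments of $\zeta^{\t}b(\t)$ uniform in $\t$ (so that Markov applied to $e^{\theta\zeta^\t b(\t)}$ yields a genuine $n^{-c\l}$ tail), or $k=k(n)\to\infty$ together with explicit control of how the constant in $\E{b(\t)^k}\le C_k n^{k/\eta}$ grows with $k$; this is exactly the obstacle you flag at the end, and it is not resolved by citing $L^p$-convergence for each fixed $p$. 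Part (iii) inherits the problem and adds one of its own: $b(\t)$ is \emph{not} concentrated around its mean in the supercritical case --- the Malthusian limit $W$ of $\zeta^\t b(\t)$ is a nondegenerate random variable --- so ``simultaneous concentration of $b(\t)$ and $b(\t-1)$ around their means'' is the wrong statement. What is actually needed is that the ratio $b(\t-1)/b(\t)$ is close to $\zeta$ with failure probability $o(n^{-1})$ uniformly in $\t$ on the event that $b(\t)$ is large, which the cited results do not deliver at that error rate. The paper proves (iii) by a direct survival argument: if $d(x)\ge A\log_\g n$ for some $x\le\t$, then a constant fraction of the processes alive at $x$ produce, independently with probability at least $(1-e^{-\a/2})(1-\zeta)$, an arrival in $(x,x+1/2)$ whose descendant CMJ process is still active at $\t$, so $d(\t)\ge d(x)/B$ by Hoeffding; combined with the pigeonhole bound this yields precisely the $\log_\g^2 n$ loss in the statement.
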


Let $\l > 0$ be as provided by Lemma \ref{CMJbirthlem} and define
$$
B(n) = \left\{\begin{array}{ll}
\l\ln n, & \a < 1, \\
\l n^{1/\eta}\ln n, & \a > 1.
\end{array}\right.
$$
Given a time $\t > 0$ we can calculate $b(\t), d(\t)$ by the following algorithm, based on the breadth-first-search algorithm. Here $S, S'$ are sets of integer strings. The numbers $i, j$ count the number of times $\EE, \LL$ have been called, respectively.
\begin{enumerate}
\item[0.] Let $S = \{0\}$, $S' = \{0\}$ and $\t_0 = 0$.
\item If $S'$ is empty, stop and output $S$ and $T = \{\t_s : s\in S\}$. Otherwise choose the smallest $s\in S'$ (ordered lexicographically) and remove it from $S'$. Let $L_s = 1$ be the lifetime of process $\mP_s$.
\item Let $X_{s1}, X_{s2}, \dots, X_{s(k+1)}$ be independent Exp$(\a)$ variables where $k \geq 0$ is the smallest integer for which $X_{s1} + \dots + X_{s(k+1)} > L_s$. If $k \geq 1$, set
\begin{align*}
\t_{s1} & = \t_s + X_{s1}, \\
\t_{s2} & = \t_s + X_{s1} + X_{s2}, \\
& \vdots \\
\t_{sk} & = \t_s + X_{s1} + X_{s2} + \dots + X_{sk}.
\end{align*}
Add $s1, s2,\dots, sk$ to $S$ and $S'$.
\end{enumerate}

\section{The degree distribution}\label{distributionsec}

This section is devoted to proving the following theorem. Suppose the graph process starts at $H = G_{t_0}$ where  $t_0 = o(n^{1/2})$. As in Section \ref{modelsec}, let $\mathcal{D}$ denote the event that $1_t > \nu_t$ for some $t\geq t_0$.

Let $G^m(p, q)$ denote the distribution of $X = X_1 + X_2 + \dots + X_m$ where $X_1,\dots,X_m$ are independent $G(p, q)$ distributed variables. Let $g^m(k; p, q)$ denote the probability mass function of $X$. Note that we define $d(n, v) = 0$ if $v$ is not in $G_n$. The functions $p(\t), q(\t)$ are defined in Section \ref{confun}.
\begin{theorem}\label{degthm}
Let $\om = o(\log n)$ be such that $t_0 \leq n^{1/2} / \om$ and $\om\to\infty$ as $n\to \infty$. Let $v \geq n/\om$, $\d = n^{-1/2}\ln n$, and $\t = \log_\g (pn/v)$. There exists a function $\widetilde{q}(\t) \in [0,1]$ with $\widetilde{q}(\t) = q(\t)$ for all $\t \notin (-\d, \d)\cup (1-\d, 1+\d)$, such that the degree $d(n, v)$ of $v$ satisfies
$$
\Prob{d(n, v) = k \mid \overline{\mathcal{D}}} = g^m(k; p(\t), \widetilde{q}(\t)) + O(B(n)n^{-1/2}\ln^2 n), \ k \geq 0.
$$
\end{theorem}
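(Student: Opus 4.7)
The plan is to work inside the master graph $\G$ of Section \ref{mastersec}, conditioning on $\overline{\mathcal{D}}$ and (via Lemma \ref{sigmalemma}) on $\s$ being $\om$-concentrated, which costs only $o(1)$ in probability. If $v\geq n/\om$ is in $G_n$, let $t_v$ be the time $v$ was inserted, so $t_v = v/p + O(n^{1/2}\ln n)$, and let $e_i = m(v-1)+i$ for $i=1,\ldots,m$ be the $m$ edges born with $v$. I decompose
$$
d(n,v) = \sum_{i=1}^m D_i,
$$
where $D_i$ counts the edges $f\in G_n$ whose random endpoint is $v$ through a chain rooted at the half-edge $(e_i,2)$, together with $e_i$ itself if $e_i\in G_n$. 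Each $D_i$ is generated by a breadth-first exploration using the assign/reveal operations of Section \ref{mastersec}: reveal $(e_i,2)$ to collect all $f$ with $\f(f)=(e_i,2)$, for each such $f\in G_n$ reveal $(f,1)$ to collect its descendants, and so on.

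The next step is to couple each BFS with the CMJ process $\mC$ of Section \ref{poissonsec} under the time change $\t(t) = \log_\g(t/t_v)$. Under $\om$-concentration, an edge born at real time $t$ is removed at $\g t + O(t^{1/2}\ln t)$, so its transformed lifetime is $[\t_e, \t_e+1]$ up to error $O(\d)$ with $\d = n^{-1/2}\ln n$, matching the unit lifetimes of $\mC$. A new edge born at $t'$ selects a given half-edge $(e,j)$ with probability $(2e(G_{t'-1}))^{-1}$, and new edges arrive at Bernoulli rate $p$; this yields intensity $p/(2(2p-1)t')\cdot(1+O(n^{-1/2}\ln n))$ at $(e,j)$ in real time, which transforms into intensity $\a\cdot(1+O(\d))$ in $\t$-time, exactly the CMJ rate. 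The $m$ subtrees and the generations inside each remain almost independent because in $\G$ the $\f(\cdot)$ are mutually independent given $\s$, and each revelation removes at most one element from the size-$\TH(n)$ sets $\OM(f)$.

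Errors are controlled using Lemma \ref{CMJbirthlem}(i)--(ii): with probability $1-o(n^{-1})$ the CMJ tree has at most $B(n)$ vertices up to time $\log_\g n$, so the BFS visits $O(B(n))$ edges. Each reveal contributes a Poissonization/rate-fluctuation error $O(n^{-1/2}\ln n)$ plus a dependence perturbation $O(B(n)/n)$ from previous revelations; summing over the $O(B(n))$ arrivals gives total coupling error $O(B(n)n^{-1/2}\ln^2 n)$. After coupling, each $D_i$ equals the CMJ count $d(\t)$ at $\t = \log_\g(pn/v)$, which by Lemma \ref{CMJlemma} is $G(p(\t),q(\t))$, so summing $m$ independent copies gives mass function $g^m(\cdot;p(\t),q(\t))$.

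The one remaining obstacle is that $q(\t)$ has jumps at $\t=0$ and $\t=1$ (a direct computation from the definitions of $a_k$ and $Q$ shows $q(k) = q(k^-)$ for every $k\ge 2$, so these are the only discontinuities), while the true law of $d(n,v)$ is smooth in $v$. The mismatch comes from the $O(n^{1/2}\ln n)$ fuzz in $t_v$ and in the removal time $\g t_v$ of each $e_i$, which corresponds to $O(\d)$ fuzz in $\t$; near the jumps of $q$ the $\TH(1)$ gap cannot be absorbed into the $O(B(n)n^{-1/2}\ln^2 n)$ error. I would therefore set $\widetilde q(\t)=q(\t)$ outside $(-\d,\d)\cup(1-\d,1+\d)$ and, inside those windows, let $\widetilde q$ interpolate through the two jumps according to the exact law of $(t_v,\g t_v)$ averaged against the CMJ distribution. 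The main difficulty of the whole argument will be keeping the per-step coupling errors additive rather than multiplicative as the BFS descends $O(\log B(n))$ generations, which requires the independence structure of $\G$ supplied by the assign/reveal machinery to remain essentially intact throughout the exploration.
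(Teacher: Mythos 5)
Your proposal follows essentially the same route as the paper: decompose $d(n,v)$ into $m$ contributions rooted at the half-edges $(e_i,2)$, explore each by reveal/assign in the master graph, couple the interarrival times under the change of variables $\t=\log_\g(t/t_v)$ to a rate-$\a$ CMJ process, control the cumulative coupling error via the $B(n)$ bound of Lemma \ref{CMJbirthlem}, and absorb the boundary effect at $\t\in(-\d,\d)\cup(1-\d,1+\d)$ into $\widetilde q$. The paper packages the per-subtree step as Lemma \ref{exposelemma} and resolves the ``additive vs.\ multiplicative error'' concern you flag by an explicit shared-uniform coupling together with a separate claim bounding the generation depth by $\log_\g^2 n$, but the argument is the same.
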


In Section \ref{outlinesec}, the idea behind this proof is outlined in the notation of the process $G_t$. The full proof presented here is based on the master graph $\G$ and will be rather technical, but the idea is the same. Condition on a feasible and $\om$-concentrated $\s\in \{0,1\}^{n-t_0}$, see Lemma \ref{sigmalemma}. We will be considering the master graph $\G = \G_n^\s(H)$ defined in Section \ref{mastersec}. Let $E_n$ be the set of edges in $\G$ with at least one endpoint in $\{1_n,\dots, \nu_n\}$, so that $G_n$ is obtained from $\G$ by removing all edges not in $E_n$.

Consider the graph $\G_0 \in \mG(\emptyset, \emptyset)$ in which all edges $e > m\n_H$ are free. Fix a vertex $v > n/\om$, and let $e_\ell = m(v-1) + \ell, \ell = 1,\dots,m$ denote the $m$ edges adjacent to $v$. Suppose an edge $e > mv$ is adjacent to $v$ in $\G$. Then $e$ must choose $\f(e) = (f, j)$ for some edge $f$ which is also adjacent to $v$. Here $j$ must be $2$ if $f\in \{e_1, \dots, e_m\}$ and $1$ otherwise. In words, for an edge to be adjacent to $v$ in $\G$ but not in $\G_0$, it must choose the appropriate endpoint of some other edge that is adjacent to $v$ in $\G$.

We will now make this idea more precise. Consider a partially generated graph $\widetilde{\G} \in \mG(A, R)$ for some sets $A, R$. For $(e_0, j_0) \notin R$, we define an operation called {\em exposing $(e_0, j_0)$}, as a sequence of {\em reveals} (as defined in Section \ref{mastersec}). Let $Q_0 = \{(e_0, j_0)\}$. For $i \geq 1$ define $Q_i = \{(e, 1) : e\notin A, \f(e) \in Q_{i-1}\}$. Consider the following algorithm for finding the edges in $\cup_{i\geq 0} Q_i$. The parts labelled Setup are not essential to the running of the algorithm, but are included to emphasize the similarity to the algorithm in Section \ref{poissonsec}, to which it will later be compared.

\begin{figure}
\begin{center}
\includegraphics[width = 0.8\textwidth]{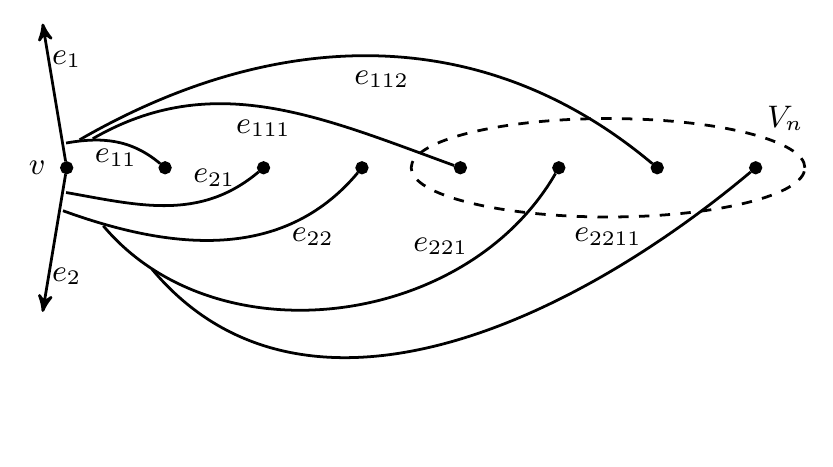}
\end{center}
\caption{One outcome of the expose algorithm for $m = 2$. Here $v$ has degree $9$ in $\G$ and degree $4$ in $G_n$. All edges in the figure are adjacent to $v$, but are drawn to indicate which half-edge was chosen, e.g. $\f(e_{221}) = (e_{22}, 1)$. Free edges are drawn as arrows.}
\label{fig:expose}
\end{figure}

The algorithm takes as input sets $A, R$, a partially generated $\widetilde{\G}\in \mG(A, R)$ and a half-edge $(e_0, j_0) \notin R$.
\begin{enumerate}
\item[0.] Let $S = \{0\}, S' = \{0\}$. Let $Q = \{(e_0, j_0)\}$.

\item If $S'$ is empty, stop and output $S$ and $Q$. Otherwise, let $s$ be the smallest member of $S'$ (in the lexicographical order) and remove $s$ from $S'$.

{\bf Setup:} Let $L_s' = \log_\g(f / e_s)$ where $f$ is the largest edge with $e_s \in E_f^\s$. 

\item Reveal $(e_s, j_s)$ to find $\f^{-1}(\{e_s, j_s\})\setminus A$. Label the edges in $\f^{-1}(\{e_s, j_s\})\setminus A$ by $e_{s1} < e_{s2} < \dots < e_{sk}$ (where $si$ denotes string concatenation). Add $(e_{s1}, 1), \dots, (e_{sk}, 1)$ to $Q$, and add $s1, s2,\dots,sk$ to $S$ and $S'$.

The partially generated graph is now in $\mG(A\cup \{e_{s1},\dots,e_{sk}\}, R\cup \{(e_s, j_s)\})$. Set $A \leftarrow A\cup \{e_{s1},\dots,e_{sk}\}$ and $R\leftarrow R\cup\{(e_s, j_s)\}$. Go to step 1.

{\bf Setup:} Let $X_{s1}' = \log_\g(e_{s1} / e_s)$ and $X_{s\ell}' = \log_\g(e_{s\ell} / e_{s(\ell-1)})$ for $\ell = 1,2,\dots, k$. Set $X_{s(k+1)}' = \infty$ and $e_{s(k+1)} = \infty$.
\end{enumerate}
With input $(e_0, j_0)$ and $\widetilde{\G} \in \mG(A, R)$, let $E((e_0, j_0), \widetilde{\G})$ be the set of edges $e\in E_n$ (i.e. edges in $G_n$) such that $e = e_s$ for some $s\in S$. 
\begin{lemma}\label{exposelemma}
Let $\om = o(\log n)$ tend to infinity with $n$. Suppose either $\a < 1$ and $0 < \e<1/2$, or $\a > 1$ and $0 < \e < 1/2-1/\eta$. Let $\widetilde{\G} \in \mG(A,R)$ where $|A|, |R| = O(n^{1/2 + \e}\log^k n)$ for some $k\geq 1$. Let $(e_0, j_0) \notin R$ satisfy $e_0 \geq mn/\om$, and let $\t = \log_\g(pmn / e_0)$. There exists a $\d = O(n^{-1/2}\ln n)$ and a function $\widetilde{q}(\t)\in [0, 1]$ such that
$$
\Prob{|E((e_0, j_0), \widetilde{\G})| = k} = \left\{\begin{array}{ll}
1 - \widetilde{q}(\t) + O(B(n)n^{-1/2+\e}\ln^2 n), & k = 0 \\
\widetilde{q}(\t)p(\t)(1-p(\t))^{k-1} + O(B(n)n^{-1/2+\e}\ln^2 n), & k \geq 1.
\end{array}\right.
$$
where $\widetilde{q}(\t) = q(\t)$ for all $\t\notin (-\d, \d) \cup (1-\d, 1 + \d)$.
\end{lemma}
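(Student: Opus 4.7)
The plan is to couple the expose algorithm to the breadth-first CMJ algorithm of Section~\ref{poissonsec} under the time change $\t_s := \log_\g(e_s/e_0)$, which sends $e_0$ to the root at time $0$ and turns discrete edge labels into real birth times. Three things need to be verified: (i) the lifetime $L'_s$ produced by expose equals $1+O(n^{-1/2}\ln n)$; (ii) the waiting times $X'_{si}$ between consecutive children of $e_s$ agree with $\mathrm{Exp}(\a)$ increments up to a small per-reveal total variation error; and (iii) an edge $e_s$ belongs to $E_n$ iff $\t_s\in(\t-1,\t]$, up to a boundary fuzz of width $O(n^{-1/2}\ln n)$. Granting these, Lemma~\ref{CMJlemma} identifies the limiting distribution as $G(p(\t),q(\t))$, and the boundary effect is absorbed into the function $\widetilde q(\t)$, which is set equal to $q(\t)$ outside $(-\d,\d)\cup(1-\d,1+\d)$ for some $\d=O(n^{-1/2}\ln n)$.

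Claim (i) is immediate from $\om$-concentration: the largest $f$ with $e_s\in E_f^\s$ equals $\g e_s+O(n^{1/2}\ln n)$ whenever $e_s\geq mn/\om$, and every edge visited during expose is at least $e_0\geq mn/\om$, so $L'_s=\log_\g(f/e_s)=1+O(n^{-1/2}\ln n)$ uniformly. For (ii), when reveal $(e_s,j_s)$ is executed each free edge $e$ with $(e_s,j_s)\in\widetilde{\OM}(e)$ independently sets $\f(e)=(e_s,j_s)$ with probability $1/|\widetilde{\OM}(e)|$. By $\om$-concentration, $|E_e^\s|=e(1-1/\g)+O(n^{1/2}\ln n)$, and $|R|=O(n^{1/2+\e}\log^k n)$ perturbs $|\OM_0(e)|$ by a factor $1+O(n^{-1/2+\e}\log^k n)$. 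Summing the Bernoulli probabilities over $e\in(e_s,\g^x e_s]$ for $x\in[0,1]$ gives expected count
\[
\frac{\ln(\g^x)}{2(1-1/\g)}+O(n^{-1/2+\e}\log^{k+1} n) = \a x + O(n^{-1/2+\e}\log^{k+1} n),
\]
using the identity $\a=\frac{p\ln\g}{2(2p-1)}=\frac{\ln\g}{2(1-1/\g)}$. Since every individual success probability is $O(n^{-1})$, a standard Poisson approximation plus the rate estimate above yields the claimed total variation bound between $(X'_{s1},X'_{s2},\ldots)$ and a rate-$\a$ Poisson arrival sequence on $[0,L'_s]$, and the lexicographic orderings in the two algorithms match step by step.

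For (iii), $e_s\in E_n$ is equivalent to $m(1_n-1)<e_s\leq m\nu_n$; $\om$-concentration gives $m\nu_n=pmn+O(mn^{1/2}\ln n)$ and $m(1_n-1)=pmn/\g+O(mn^{1/2}\ln n)$ (since $1-p=p/\g$), so in the $\t_s$ coordinate this window is $(\t-1,\t]$ shifted by $O(n^{-1/2}\ln n)$ at each endpoint. Outside the boundary strips this matches the CMJ aliveness condition exactly, and the coupling plus Lemma~\ref{CMJlemma} yields the distribution $G(p(\t),q(\t))$; inside, $\widetilde q(\t)$ is defined to be the true alive-probability under the shifted window, which certainly lies in $[0,1]$. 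To accumulate the error, Lemma~\ref{CMJbirthlem}(i)--(ii) implies that the coupled CMJ tree has at most $B(n)$ vertices with probability $1-o(1/n)$, and a union bound of the per-step error $O(n^{-1/2+\e}\log^{k+1} n)$ over the $B(n)$ reveals yields the overall bound $O(B(n)n^{-1/2+\e}\ln^2 n)$ stated in the lemma (absorbing $\log^k$ into the implicit constant, justified by the hypothesis on $\e$).

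The main obstacle will be (ii): successive reveals are not independent, because removing $(e_s,j_s)$ from every $\widetilde{\OM}(e)$ and committing the newly discovered children to $A$ both shrink the pool of free edges and drift $|\widetilde{\OM}(e)|$ for later steps. The key observation is that across the entire expose only $O(B(n))$ new half-edges enter $R$ and $O(B(n))$ edges enter $A$, both comfortably dominated by the initial budget $O(n^{1/2+\e}\log^k n)$; hence $|\widetilde{\OM}(e)|$ stays within the same multiplicative tolerance throughout the entire procedure, the effective rate stays within $\a(1+O(n^{-1/2+\e}\log^k n))$ at every reveal, and the coupling goes through at the stated accuracy.
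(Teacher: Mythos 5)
Your overall strategy is the paper's: time-change by $\t_s=\log_\g(e_s/e_0)$, check that lifetimes are $1+O(n^{-1/2}\ln n)$, that the reveal probabilities reproduce rate $\a$ (your identity $\a=\ln\g/(2-2/\g)$ is correct and matches the computation in \eqref{nearexp}), that membership in $E_n$ corresponds to the window $(\t-1,\t]$ up to $O(n^{-1/2}\ln n)$ endpoint shifts, and then accumulate errors over the $O(B(n))$ reveals using Lemma \ref{CMJbirthlem}. The boundary strips absorbed into $\widetilde q$ are also handled as in the paper.

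There is, however, a genuine gap in step (ii) and in how you pass from per-reveal accuracy to the final count. First, a total variation bound between $(X'_{s1},X'_{s2},\dots)$ and a rate-$\a$ Poisson arrival sequence is not available: the $X'_{si}$ are supported on a discrete set of logarithms of integer ratios, so their TV distance to a continuous exponential is $1$. What one can (and the paper does) construct is a quantile coupling in which, on a good event of probability $1-O(n^{-1/2+\e}\ln^2 n)$ per increment, $X'_{si}=X_{si}+O(n^{-1/2+\e}\ln^k n)$ with $X_{si}\sim\mathrm{Exp}(\a)$, suitably truncated. But then the birth time $\t'_s$ of a descendant is a \emph{sum} of such increments along the root-to-$s$ path, so its error is $O(|s|\log n\cdot n^{-1/2+\e}\ln^k n)$, where $|s|$ is the generation of $s$. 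Your proposal never controls $|s|$; the paper needs a separate claim (via the induction $\E{P_k(\t)}\le(\a\t)^k/k!$ and Markov) that all generations satisfy $|s|\le\log_\g^2 n$ with probability $1-o(n^{-1})$. Second, even after this, having every $\t'_s$ close to $\t_s$ does not by itself give equality of the two counts: one must still rule out that some $\t_s$ lies within the accumulated-error distance of the window endpoints $\t-1$ or $\t$, which would flip the classification of $e_s$. The paper bounds this by $O(B(n)n^{-1/2}\ln^{k+3}n)$ using that the CMJ process with at most $B(n)$ active processes locally has arrival rate at most $\a B(n)$ (this is the argument surrounding \eqref{dtid}). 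Your union bound over reveals covers the probability that some coupling is bad, but not this boundary-crossing event for descendants in the interior of $[0,\t]$; both ingredients are needed to conclude $|E((e_0,j_0),\widetilde\G)|=d(\t)$ with the stated error.
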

Before proving the lemma, we show how it is used to finish the proof of Theorem \ref{degthm}. Consider the graph $\G_0 \in  \mG(\emptyset, \emptyset)$ in which no assignments or reveals have been made. We expose $(e_1, 2)$ to find that $|E((e_1, 2), \G_0)|$ is asymptotically $G(p(\t), \widetilde{q}(\t))$ distributed. Exposing $(e_1, 2)$ gives a partially generated graph $\G_1 \in \mG(A_1, R_1)$ where $A_1$ is the set of edges assigned while exposing $(e_1, 2)$ and $R_1$ consists of $(e_1, 2)$ and $(e, 1)$ for all $e\in A_1$. By Lemma \ref{CMJbirthlem} we have $|A_1|, |R_1| = O(B(n)) = o(n^{1/2})$ whp. Apply Lemma \ref{exposelemma} to $\G_1$ to find that $|E((e_2, 2), \G_1)|$ is asymptotically $G(p(\t), \widetilde{q}(\t))$ distributed, and consider $\G_2 \in \mG(A_2, R_2)$, where $A_2\setminus A_1$ and $R_2\setminus R_1$ consist of the edges assigned and revealed when exposing $(e_2, 2)$. Repeating this $m$ times keeps the sets $A_i, R_i$ of size $o(n^{1/2})$, and we find that $|E((e_i, 2), \G_{i-1})|$ is asymptotically $G(p(\t), \widetilde{q}(\t))$ distributed for $i = 1,2,\dots, m$. Then
$$
d(n, v) = \sum_{i=1}^m |E((e_i, 2), \G_{i-1})|
$$
and the theorem follows.

The above assumes that each vertex makes its $m$ random choices with replacement. In the process of determining $d(n, v)$, $O(B(n))$ edges are revealed. The probability for any edge $e$ to be adjacent to $v$ is $O(B(n)/n)$, and it follows that the probability that two edges $e_1, e_2$ with $\lceil e_1 / m\rceil = \lceil e_2 / m\rceil$ are adjacent to $v$ is $O(B(n)^2 / n) = o(B(n)n^{-1/2})$. This shows that $d(n, v)$ has the same asymptotic distribution when choices are made with or without replacement.

\subsection{Proof of Lemma \ref{exposelemma}}

For $i \geq 1$ write $X_{si}' = \log_\g(e_{si} / e_{s(i-1)})$, where we say $e_{s0} = e_s$. We will show that the collection $\{X_s' : s\in S\}$ can be coupled to a collection $\{X_s : s\in S\}$ of independent Exp$(\a)$ variables in such a way that $X_s' = X_s + O(n^{-1/2+\e}\ln n)$ for all $s$ with high probability. The lemma will then follow from arguing that a CMJ process on $[0, \t]$ with $\t\leq \log_\g n$ is robust with high probability, in the sense that changing interarrival times by $O(n^{-1/2+\e}\ln n)$ does not change the value of $d(\t)$.

The set of edges $e$ with $e_0 \in E_e^\s$ is $\{e_0 + i, e_0 + i + 1,\dots, e_0'\}$ for some $i\in [m]$ and some $e_0'$. Since $\s$ is $\om$-concentrated, there exists a constant $C > 0$ such that for all edges $e \geq mn/\om$, the largest edge that may choose $e$ is $e'$ where $e(\g - Cn^{-1/2}\ln^2 n) < e' < e(\g + Cn^{-1/2}\ln^2 n)$. Fix such a $C$ and let $\d_1 = Cn^{-1/2}\ln^2 n$, and let $\d = O(n^{-1/2}\ln^2 n)$ be such that $1 - \d < \log_\g (\g + \d_1) < 1 + \d$. Let $\t = \log_\g (pmn / e_0)$. If $\t \leq -\d$ then $e_0\notin \G$, if $\d \leq \t \leq 1-\d$ then $e_0 \in E_n$, and if $\t \geq 1 + \d$ then $e_0\in \G$ but $e_0\notin E_n$. We will be assuming that $\t \notin (-\d, \d) \cup (1 - \d, 1 + \d)$, and leave the cases $\t\in (-\d, \d)$ and $\t \in (1-\d, 1 + \d)$ until the end of the proof.

Now, consider the first edge $e_{01}$ that chooses $(e_0, j_0)$, taken to be $\infty$ if no edge chooses $(e_0, j_0)$. Since $\s$ is $\om$-concentrated and $|R| = O(n^{1/2+\e}\log^k n)$ for some $k\geq 1$, we have $|\widetilde{\OM}(e)| = 2\m e/pm + O(n^{1/2}\ln n) - O(n^{1/2+\e}\log^k n) = 2\m e/pm + O(n^{1/2+\e}\log^k n)$ for all $e > mn / \om$. Since $e_0 > mn / \om$, if $(e_0, j_0) \in \widetilde{\OM}(e)$ then
$$
\Prob{e \mbox{ chooses } (e_0, j_0)} = \frac{pm}{2\m e} + O(n^{-3/2 + \e}\ln^k n),
$$
independently of the random choice of all other edges. Let $i\in [m]$ be the smallest number for which $e_0 \in E_{e_0 + i}^\s$, and suppose $y > 1$ is such that $e_0 \in E_{\lfloor ye_0\rfloor}^\s$. Then if $x = \log_\g y$,
\begin{align}
\Prob{e_{01} > ye_0} & = \prod_{\substack{e = e_0 + i \\ e\notin A}}^{\lfloor ye_0\rfloor} \left(1 - \frac{pm}{2\m e} + O(n^{-3/2 + \e}\ln^k n)\right) \nonumber \\
& = \exp\left\{-\frac{pm}{2\m}\sum_{\substack{e = e_0 + i \\ e\notin A}}^{\lfloor ye_0\rfloor} \left(\frac1e + O(n^{-3/2+\e}\ln^k n)\right)\right\} \nonumber \\
& = \exp\left\{-\a x\left(1 + O\left(\frac{|A|}{n} + n^{-1/2 + \e}\ln^k n\right)\right)\right\} \nonumber \\
& = \exp\left\{-\a x\left(1 + O(n^{-1/2 + \e}\ln^k n)\right)\right\}. \label{nearexp}
\end{align}
This suggests that $X_{01}' = \log_\g(e_{01} / e_0)$ is approximately exponentially distributed, in the range of $y$ for which $e_0 \in E_{\lfloor ye_0\rfloor}^\s$. We will couple $X_{01}'$ to an exponentially distributed random variable, and the coupling technique will depend on whether or not $e_0 \in E_n$. Define $\t = \log_\g(pmn / e_0)$. As noted above, $\t > 1 + \d$ implies $e_0\in \G$ and $e_0\notin E_n$, while $\d < \t < 1-\d$ implies $e_0\in E_n$.

{\bf Case 1, $e_0\notin E_n$.}\\
Suppose $\t > 1 + \d$, so that $e_0\notin E_n$ under our choice of $\s$. By choice of $\d_1$, there exists a $y \in (\g-\d_1, \g+\d_1)$ such that $ye_0$ is the largest edge for which $e_0 \in E_{ye_0}^\s$. Applying \eqref{nearexp} with this $y$, we have $\Prob{e_{01} = \infty} = \exp\left\{-\a(1 + O(n^{-1/2}\ln^k n))\right\}$, since $\log_\g (\g + \d) = 1 + O(n^{-1/2}\ln^2 n)$. For $L > 0$ we define a distribution Exp$(\a, L)$ by saying that $X\sim \mbox{Exp}(\a, L)$ if $\Prob{X > x} = e^{-\a x}$ for $0 < x < L$ and $\Prob{X = \infty} = e^{-\a L}$. We will couple $X_{01}'$ to an Exp$(\a, 1)$ variable, as described below.

Condition on $e_{01}$ and consider $e_{02}$, the second edge that chooses $e_0$. Repeating \eqref{nearexp} shows that $\Prob{e_{02} > ye_{01}} = \exp\{-\a x(1 + O(n^{-1/2+\e}\ln^k n))\}$ where $x = \log_\g y$, for all $y$ such that $e_0 \in E_{ye_{01}}^\s$. The largest such $y$ is $\g e_0 / e_{01} + O(n^{-1/2}\ln^2 n)$, and
$$
\log_\g\left(\frac{\g e_0}{e_{01}} + O(n^{-1/2}\ln^2 n)\right) = 1 - X_{01}' + O(n^{-1/2}\ln^2 n).
$$
We will couple $X_{02}'$ to an Exp$(\a, 1 - X_{01}')$ variable. In general, $X_{0i}'$ will be coupled to an Exp$(\a, 1 - X_{01}' - \dots - X_{0(i-1)}')$ variable, conditioning on $X_{01}',\dots,X_{0(i-1)}'$.

{\bf Case 2, $e_0 \in E_n$.}\\
In the case $\d < \t < 1 - \d$, where $e_0 \in E_n$, we instead couple $X_{01}'$ to an Exp$(\a, \t)$ variable, since the largest edge that may choose $(e_0, j_0)$ is $m\n_n = pmn + O(n^{-1/2}\ln n)$, the largest edge in $\G$. We will couple $X_{0i}'$ to an Exp$(\a, \t - X_{01}' - \dots - X_{0(i-1)}')$ variable.

{\bf Coupling the variables:} Let $\t' = \min\{1, \t\}$. In terms of Exp$(\a, L)$ variables, we can define a Poisson process on $[0, \t']$ as follows. Let $X_{01} \sim \mbox{Exp}(\a, \t')$. Conditioning on $X_{01} = x_{01} < 1$ we define $X_{02} \sim \mbox{Exp}(\a, \t'-x_{01})$. In general let $X_{0i} \sim \mbox{Exp}(\a, \t'-x_{01}-\dots-x_{0(i-1)})$ until $X_{0k} = \infty$. Then $X_{01},\dots,X_{0(k-1)}$ are the interarrival times for a Poisson process of rate $\a$ on $[0, 1]$.

We will now describe the coupling explicitly. Let $U_{01}, U_{02},\dots$ be a sequence of independent uniform $[0,1]$ variables. The variable $X_{01}\sim \mbox{Exp}(\a, 1)$ is given by
$$
X_{01} = \left\{\begin{array}{ll}
-\a^{-1}\ln U_{01}, & e^{-\a} < U_{01} < 1, \\
\infty, & 0 < U_{01} < e^{-\a}.
\end{array}\right.
$$
and for $i \geq 1$, conditioning on $X_{01} = x_{01}, \dots, X_{0i} = x_{0i}$ where $x_{01} + \dots +  x_{0i} < 1$,
$$
X_{0(i+1)} = \left\{\begin{array}{ll}
-\a^{-1}\ln U_{0(i+1)}, & e^{-\a(1 - x_{01} - \dots - x_{0i})} < U_{0(i+1)} < 1, \\
\infty, & 0 < U_{0(i+1)} < e^{-\a(1-x_{01} - \dots - x_{0i})}.
\end{array}\right.
$$
Define $X_{01}' = \min\{\log_\g y : \Prob{e_{01} > ye_0} \leq U_{01}\}$, taken to be $\infty$ if the set is empty. Recall that $\d_1 = O(n^{-1/2}\ln^2 n)$ is such that $1-\d_1 < \log_\g (\g + \d) < 1 + \d_1$. Then by \eqref{nearexp} and the choice of $\d$,
$$
\mbox{if } \ U_{01} > e^{-\a(1 - \d_1)} \ \mbox{ then } \ X_{01}' = \frac{-1}{\a + O(n^{-1/2 + \e}\ln^k n)}\ln U_{01} = X_{01} + O(n^{-1/2+\e}\ln^k n),
$$
and if $U_{01} < \exp\{-\a(1 + \d_1)\}$ then $X_{01}' = \infty$. Say that this coupling of $X_{01}, X_{01}'$ is {\em good} if either $X_{01}, X_{01}'$ are both infinite, or $X_{01}' = X_{01} + O(n^{-1/2 + \e}\ln^k n)$, and {\em bad} otherwise. The above shows that
\begin{align*}
\Prob{\mbox{the coupling of $X_{01}, X_{01}'$ is bad}} 
& = \Prob{e^{-\a(1 + \d_1)} < U_{01} < e^{-\a(1- \d_1)}} \\
& = O(n^{-1/2}\ln^2 n).
\end{align*}
Suppose $i > 1$ and condition on the couplings of $X_{0j}, X_{0j}'$ being good with $X_{0j}, X_{0j}' < \infty$ for all $1\leq j < i$. Define $X_{0i}' = \min\left\{\log_\g y : \Prob{e_{0i} > ye_{0(i-1)} \middle| X_{01}',\dots, X_{0(i-1)}'} \leq U_{0i}\right\}$. We repeat the above argument to show that the coupling of $X_{0i}', X_{0i}$, conditioning on previous couplings, is bad with probability $O(n^{-1/2}\ln^2 n)$.

Let $\mathcal{C}_0$ be the event that the coupling of the $X_{0i}$ making up the Poisson process $\mP_0$ is good for all $i$. Since the process has $O(\log n)$ arrivals with probability $1 - o(n^{-1})$, we have $\Prob{\mathcal{C}_0} = 1 - O(n^{-1/2+\e}\ln^3 n)$. After revealing $e_0$, the partially generated graph $\widetilde{\G}$ is in $\mG(A', R')$, where $|A' \setminus A| = O(\log n)$ whp and $|R' \setminus R| = 1$. Thus, the coupling argument can be applied to $O(n^{1/2+\e})$ Poisson processes with $|A|, |R| = O(n^{1/2 + \e}\ln^k n)$ being maintained.

Let $S'(t)$ be the state of the set $S'$ after Step 2 of the algorithm has been executed $t$ times, and let $S_c'(t)$ be the corresponding set in the CMJ generating algorithm of Section \ref{poissonsec}. We just showed that $S'(1) = S_c'(1)$ with probability $1 - O(n^{-1/2+\e}\ln^2 n)$. For any process $\mP_s$ that appears, we apply a coupling using the technique above, and we have $S'(t) = S_c'(t)$ for all $1\leq t < B(n)$ with probability $1 - O(B(n)n^{-1/2 + \e}\ln^2 n) = 1-o(1)$. We also have $S_c'(B(n)) = \emptyset$ with probability $1 - o(n^{-1})$, by Lemma \ref{CMJbirthlem}, so
$$
\Prob{S'(t) \neq S_c'(t) \ \mbox{for some $t \geq 1$}} = o(1).
$$
Condition on the two algorithms producing the same set $S$ of strings. For $s = 0s_1\dots s_j\in S$ we have
$$
\t_s = \sum_{i = 1}^{s_1} X_{0i} + \sum_{i=1}^{s_2} X_{0s_1i} + \dots + \sum_{i=1}^{s_j} X_{0s_1\dots s_{j-1}i},
$$
and the same identity holds with $\t_s, X_r$ replaced by $\t_s', X_r'$. If $s = 0s_1\dots s_j$ let $|s| = j$ be the {\em generation} of $s$. With probability $1-o(n^{-1})$, each Poisson process has $O(\log n)$ arrivals, so each $s_i = O(\log n)$. Thus $\t_s$ is a sum of $O(|s|\log n)$ variables $X_r$, and if all couplings are good then $\t_s' = \t_s + O(|s|n^{-1/2}\ln^{k+1} n)$ for all $s\in S$. We need to bound $|s|$.

{\bf Claim:} Consider a CMJ process with rate $\a > 0$ and lifetime $1$. Let $0\leq \t\leq\log_\g n$ and $S(\t) = \max\{|s| : \t_s \leq \t\}$. Then $\Prob{S(\t) > \log_\g^2 n} = o(n^{-1})$.

{\bf Proof of claim:} Let $P_k(\t)$ denote the number of processes $\mP_s$ with $|s| = k$ and $\t_s < \t$. Condition on $\mP_0$ having arrivals at time $x_1,\dots,x_\ell$. Then $\mC$ can be seen as $\mP_0$ together with $\ell$ independent CMJ processes $\mC^1,\dots,\mC^\ell$ on $[x_1,\t], \dots, [x_\ell, \t]$ respectively. Then
$$
P_k(\t) = \sum_{j=1}^\ell P_{k-1}^j(\t - x_j)
$$
where $P_{k-1}^j(\t-x_j)$ counts the number of $(k-1)$th generation processes started before $\t-x_j$ in $\mC^j$. Let $U$ denote a uniform $[0,1]$ random variable. Removing the conditioning and taking expectations, we have
$$
\E{P_k(\t)} = \sum_{\ell\geq 0} \frac{e^{-\a}\a^\ell}{\ell!}\sum_{j=1}^\ell \E{P_{k-1}^j(\t - U)} = \E{P_{k-1}(\t-U)}\sum_{\ell\geq 1}\frac{e^{-\a}\a^\ell}{(\ell-1)!} = \a\E{P_{k-1}(\t-U)}.
$$
Here we use the fact that if we condition on a Poisson process on $[0,1]$ having $\ell$ arrivals, the arrival times are independently uniformly distributed. Note that $P_k(\t) = 0$ for all $\t < 0$.

We show by induction over $k$ that $P_k(\t) \leq (\a\t)^k / k!$ for all integers $k\geq 0$ and all $\t \geq 0$. For the base case we have $P_0(\t) = 1$ for $\t\geq 0$. If $P_k(\t) \leq (\a\t)^k/k!$ for all $\t \geq 0$ then if $\t \geq 0$,
\begin{align*}
& \E{P_{k+1}(\t-U)} = \a\E{\E{P_k(\t-U)\mid U}} \\
&  \leq \a\E{\frac{\a^k(\t-U)^k}{k!}} = \frac{\a^{k+1}}{(k+1)!}(\t^{k + 1} - \max\{0, \t-1\}^{k+1}) \leq \frac{(\a\t)^{k+1}}{(k+1)!},
\end{align*}
where we use the fact that $P_k(\t) = 0$ for $\t < 0$, and the induction is complete.

Let $k = \log_\g^2 n$. Then by Markov's inequality and the bound $k! \geq (k/e)^k$,
$$
\Prob{\exists s : \t_s < \t \mbox{ and } |s| = k} \leq \E{P_k(\t)} \leq \bfrac{e\a\t}{k}^k \leq \bfrac{e\a}{\log_\g n}^{\log_\g n} = O(n^{-C})
$$
for any $C > 0$. Since $P_{k'}(\t) \leq P_k(\t)$ for any $k' \geq k$, the claim follows.\\
{\bf End of proof of claim.}

We have shown that with high probability, the graph algorithm produces a set $S$ and a set $\{\t_s' : s\in S\}$ that matches the set $\{\t_s : s\in S\}$ of a CMJ process in the sense that $\t_s' = \t_s + O(n^{-1/2 + \e}\ln^{k+3} n)$ for all $s$. Since $d(\t)$ counts the number of $\t_s$ in the interval $(\t-1,\t)$, we can finish the proof by arguing that
\begin{equation}\label{dtid}
\{s\in S: \t - 1 < \t_s < \t\} = \{s\in S: e_s \in E_n\}.
\end{equation}
Since $\s$ is $\om$-concentrated, every edge $e\in E_n$ satisfies $\log_\g(e / e_0) \in (\t - 1 - O(n^{-1/2}\ln n), \t + O(n^{-1/2}\ln n))$ where $\t = \log_\g (pmn / e_0)$. Condition on $\t_s' = \t_s + O(n^{-1/2+\e}\ln^{k+3} n)$ for all $s\in S$. If \eqref{dtid} is false, there must exist some $s\in S$ such that either $\t_s = \t-1 + O(n^{-1/2+\e}\ln^{k+3} n)$ or $\t_s = \t + O(n^{-1/2+\e}\ln^{k+3} n)$. The probability of this is $O(B(n)n^{-1/2}\ln^{k+3} n)$, since a CMJ process with at most $B(n)$ active processes locally behaves like a Poisson process with rate at most $\a B(n)$. This finishes the proof of the lemma for $\t\notin (-\d, \d) \cup (1-\d, 1 +\d)$.

If $\t\in (-\d, \d)\cup (1-\d, 1+\d)$, then \eqref{dtid} is false with some significant probability, since one set may contain $s = 0$ while the other one does not. The function $\widetilde{q}(\t)$ accounts for this event.

\section{The degree sequence}\label{degseqsec}

For $k\geq 0$ let $X_k(n)$ denote the number of vertices of degree $k$ in $G_n$. In this section we prove the following. Recall $\eta = -\ln \g / \ln \zeta > 2$, defined when $\a > 1$, see Section \ref{confun}. Recall that $\mathcal{D}$ denotes the event that at some point, the graph process contains no edges. The probability of $\mathcal{D}$ depends on the initial graph $H = G_{t_0}$, see Lemma \ref{sigmalemma}.
\begin{theorem}\label{degseqthm}
Condition on $\overline{\mathcal{D}}$. There exists a sequence $\{x_k : k\geq 0\}$ such that
\begin{enumerate}[(i)]
\item if $\a < 1$ then $x_k = \a^{k(1+o_k(1))}$ and if $\a > 1$ then there exist constants $a, b > 0$ such that $x_k = ak^{-\eta-1} + O_k(k^{-\eta-2}\log^b k)$, and
\item for any fixed $k \geq 0$, $X_k(n) = x_kn(1 + o_n(1))$ with high probability as $n\to \infty$.
\end{enumerate}
\end{theorem}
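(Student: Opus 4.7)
The plan is to proceed in three stages. First, I would compute $\E{X_k(n)}$ using Theorem \ref{degthm}: write $X_k(n) = \sum_v \mathbf{1}[d(n,v) = k]$, discard the $O(n/\om) = o(n)$ contribution from vertices $v < n/\om$, and for $v \geq n/\om$ apply Theorem \ref{degthm} (after conditioning on $\overline{\mathcal{D}}$ and on an $\om$-concentrated $\s$, both of which hold whp by Lemma \ref{sigmalemma}) to get $\Prob{d(n,v) = k} = g^m(k;\, p(\t_v), \widetilde{q}(\t_v)) + O(B(n) n^{-1/2} \log^2 n)$ where $\t_v = \log_\g(pn/v)$. Since $\widetilde{q}$ agrees with $q$ outside an $O(n^{-1/2} \log n)$-measure subset of $\t$-values, replacing $\widetilde{q}$ by $q$ costs only $o(n)$. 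The substitution $v = pn \g^{-\t}$ turns the sum over $v$ into a Riemann approximation of an integral, giving
$$
\E{X_k(n)} = n x_k + o(n), \qquad x_k := p \ln(\g) \int_0^\infty g^m(k;\, p(\t), q(\t))\, \g^{-\t}\, d\t.
$$

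Next I would establish concentration by the second moment method. The key claim is $\Prob{d(n,v) = k,\, d(n,w) = k} = \Prob{d(n,v) = k}\Prob{d(n,w) = k}(1 + o(1))$ for all but $o(n^2)$ pairs $v \ne w$. To see this, apply the expose procedure first at $v$; by Lemma \ref{CMJbirthlem} this assigns and reveals only $O(B(n))$ half-edges, leaving a partially generated graph in some $\mG(A, R)$ with $|A|, |R| = O(B(n)) = o(n^{1/2})$, which still satisfies the hypotheses of Lemma \ref{exposelemma}. Running the expose procedure at $w$ inside this partially generated graph then yields a degree asymptotically distributed as $G^m(p(\t_w), \widetilde{q}(\t_w))$, independent of $d(n,v)$. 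Summing pairwise estimates and using the first-moment bound, the variance of $X_k(n)$ is $o(\E{X_k(n)}^2)$, so Chebyshev gives $X_k(n) = n x_k(1 + o(1))$ whp.

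For the tail of $x_k$, I would expand
$$
g^m(k; p, q) = \sum_{j = 0}^{\min(m, k)} \binom{m}{j} q^j (1 - q)^{m-j} \binom{k-1}{j-1} p^j (1-p)^{k-j}
$$
for $k \geq 1$, with $g^m(0; p, q) = (1-q)^m$. When $\a < 1$, Lemma \ref{constlem}(v) gives $p(\t) \to 1 - \a$ and $q(\t) \to 0$ geometrically in $\z^{-\t}$, so $(1 - p(\t))^{k-1}$ is essentially $\a^{k-1}$ once $\z^\t \gg 1$, while the factor $q(\t)^j \g^{-\t}$ decays like $(\z\g)^{-\t}$; multiplying and optimizing over $\t$ produces $x_k = \a^{k(1 + o_k(1))}$. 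When $\a > 1$, Lemma \ref{constlem}(vi) gives $p(\t) \sim \l_3 \z^\t$ and $q(\t) \to 1 - \z$, so for large $k$ the integrand peaks where $p(\t) \asymp 1/k$ and the $j = 1$ term dominates; the substitution $u = k p(\t)$ converts the integral to a $\Gamma$-function form, and using $\g^{-\t} = (p(\t)/\l_3)^\eta \asymp k^{-\eta}$ at the peak together with the extra factor $p(\t) \asymp k^{-1}$ yields $x_k = a k^{-\eta-1} + O_k(k^{-\eta-2} \log^b k)$.

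The delicate step will be the second-moment argument: the first-moment computation is a direct integration and the tail analysis is essentially Laplace's method, but pairwise asymptotic independence requires running the expose procedures sequentially while carefully tracking the accumulated $A$ and $R$ through Lemma \ref{exposelemma}, and a union bound over pairs $(v, w)$ to control the rare events that the exposed neighborhoods intersect (these are $O(B(n)^2/n) \cdot n^2 = o(n^2)$ pairs). Once this bound is in hand, the Chebyshev step and the asymptotic analysis of $x_k$ follow in straightforward fashion from the properties already collected in Lemma \ref{constlem}.
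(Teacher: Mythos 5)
Your first-moment computation and the resulting integral formula for $x_k$ coincide with the paper's, and your tail analysis (splitting the integral, Beta/Gamma substitution for $\a>1$, factoring out $\a^{k-\ell}$ for $\a<1$) is the same as the paper's. Where you genuinely diverge is the concentration step. The paper does not use a second moment: it conditions on a feasible, $\om$-concentrated $\s$, views $X_k(n)$ as a function of the independent choices $\f(e)$ of the edges of the master graph $\G$, and applies Azuma's inequality to the edge-exposure martingale $Y_i^\s = \E{X_k(n)\mid \G_i}$ with Lipschitz constant $3$ (changing one edge's choice moves at most three vertices' degrees). That gives $|X_k(n)-\E{X_k(n)}| \le n^{1/2}\ln n$ with probability $1-e^{-\Omega(\ln^2 n)}$, which is both stronger and shorter than what Chebyshev yields. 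Your second-moment route does work: running the expose procedure at $v$ costs $|A|,|R| = O(B(n)) = o(n^{1/2})$ whp, so Lemma \ref{exposelemma} still applies at $w$, the per-pair additive error $O(B(n)n^{-1/2+\e}\ln^2 n)$ sums to $O(n^{3/2+\e}B(n)\ln^2 n) = o(n^2)$ precisely because $\e < 1/2 - 1/\eta$ and $\eta>2$, and the exceptional pairs (where $w$ lies in $v$'s exposed neighborhood) number only $O(nB(n))$ in expectation. So your approach buys nothing but also loses little beyond the sharper deviation bound; the martingale argument is the cleaner choice here since $X_k(n)$ is manifestly $3$-Lipschitz in the independent edge choices.

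One substantive correction to your tail heuristic for $\a>1$: it is not true that the $j=1$ term dominates. At the peak $p(\t)\asymp 1/k$ the $j$-th term carries $\binom{k-1}{j-1}\asymp k^{j-1}$ against $p(\t)^j\asymp k^{-j}$, and the integral of the $j$-th summand is $\asymp \Gamma(\eta+j)k^{-\eta-j}$, so every $j=1,\dots,m$ contributes at order $k^{-\eta-1}$ (this is exactly how the paper assembles $a = p\ln\g\sum_{\ell}\binom{m}{\ell}s_\ell c'_\ell$). This does not affect the exponent $-\eta-1$ or the form of the error term, only the constant $a$, which the theorem does not specify — but your computation of $a$ would be wrong as described.
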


\begin{proof}
Fix $k\geq 0$. We begin by showing that $X_k(n) = (1+o_n(1)) \E{X_k(n)}$ whp. We will use Azuma's inequality in the general exposure martingale setting in \cite[Section 7.4]{alonspencer}. To do this, fix a feasible $\s$ and consider the master graph $\G = \G_n^\s(H)$ for a fixed starting graph $H$ (see Section \ref{modelsec}). Let $\G_0$ be the unexplored graph, as defined in Section \ref{mastersec}, and define a sequence $\G_1, \G_2,\dots,\G_M = \G$ of partially generated graphs. Here $\G_{i}$ is obtained from $\G_{i-1}$ by letting edge $m\nu_H + i$ make its random choice. Consider the edge exposure martingale $Y_i^\s = \E{X_k(n) \mid \G_i}$. If $E_n$ denotes the edge set of $G_n$, then $X_k(n)$ is given as a function of $\G$ by counting the number of vertices which are incident to exactly $k$ edges of $E_n$. This martingale satisfies the Lipschitz inequality $|Y_i^\s - Y_{i-1}^\s| \leq 3$, since the degrees of at most $3$ vertices are affected by changing the choice of one edge (see e.g. Theorem 7.4.1 of \cite{alonspencer}). By Azuma's inequality, conditioning on $\s$ we have $|X_k(n) - \E{X_k(n)}| < n^{1/2}\ln n$ with probability $1 - e^{-\OM(\ln^2 n)}$, noting that $M = m(\nu_n - \nu_H)$ is of order $n$. We will show that $\E{X_k(n)} / n$ has essentially the same limit for all feasible and $\om$-concentrated $\s$, setting $\om=\log\log n$, and the result will follow since $\s$ is $\om$-concentrated with probability $1-o(n^{-1})$ (Lemma \ref{sigmalemma}) and $X_k(n)\leq n$. Fix a feasible and $\om$-concentrated $\s$ for the remainder of the proof.

Recall that $G^m(p, q)$ denotes the distribution of the sum of $m$ independent $G(p, q)$ variables. If $X \sim G^m(p, q)$ and $k \geq m$ then
\begin{align}
\Prob{X = k} & = \sum_{\ell = 1}^m \binom{m}{\ell}\sum_{\substack{k_1 + \dots + k_\ell = k \\ k_1, \dots, k_\ell > 0}} (1-q)^{m-\ell}\prod_{i = 1}^\ell qp(1-p)^{k_i - 1} \nonumber \\
& = \sum_{\ell = 1}^m \binom{m}{\ell} \binom{k-1}{\ell-1}(1-q)^{m-\ell}q^\ell p^\ell (1-p)^{k-\ell}. \label{mdist}
\end{align}
Here $\ell$ represents the number of nonzero terms in the sum $X = X_1+\dots+X_m$, and $\binom{k-1}{\ell-1}$ is the number of ways to write $k$ as a sum of $\ell$ positive integers. By linearity of expectation,
$$
\E{X_k(n)} = \sum_{v = 1}^{n} \Prob{d(n, v) = k}.
$$
Let $\om = \log\log n$. By Theorem \ref{degthm} we have
$$
\sum_{v=1}^n \Prob{d(n, v) = k} = O\bfrac{n}{\om} +\sum_{v=n/\om}^n \left(\Prob{G^m(p(\t), \widetilde{q}(\t)) = k} + O\left(\frac{B(n)\ln^3 n}{n^{1/2}}\right)\right).
$$
Summing the $O(n^{-1/2}B(n)\ln^3 n)$ terms gives a cumulative error of $O(n^{1/2}B(n)\ln^3 n) = o(n)$, since either $B(n) = O(\log n)$ or $B(n) = O(n^{1/\eta}\ln n)$ (see Section \ref{poissonsec}) and $\eta > 2$ (see Lemma \ref{constlem} (\ref{etalemma})). So if $k \geq m$ and $\t_v = \log_\g(pn / v)$, by \eqref{mdist},
\begin{equation}\label{Exkn1}
\E{X_k(n)} = O\left(\frac{n}{\om}\right) + \sum_{v=n/\om}^n\sum_{\ell = 1}^m \binom{m}{\ell}\binom{k-1}{\ell-1}(1-\widetilde{q}(\t_v))^{m-\ell}\widetilde{q}(\t_v)^\ell p(\t_v)^\ell(1-p(\t_v))^{k-\ell},
\end{equation}
where $\widetilde{q}(\t) \in (0,1)$ and $\widetilde{q}(\t) = q(\t)$ outside $(-\d, \d) \cup (1-\d, 1 + \d)$ for some $\d = O(n^{-1/2}\ln n)$. For $n/\om \leq v\leq n$ we have $\log_\g p \leq \t_v \leq \log_\g (p\om)$ (note that $\log_\g p < 0$), and for any $\t$ in the interval, the number of $v$ for which $\t \leq \t_v \leq \t + \e$ is $pn\e\ln (\g)\g^{-\t} + O(\e^2)$. Viewing the sum as a Riemann sum, we have
\begin{align}
& \lim_{n\to\infty} \frac{1}{n} \sum_{v=n/\om}^n (1-\widetilde{q}(\t_v))^{m-\ell}\widetilde{q}(\t_v)^\ell p(\t_v)^\ell(1-p(\t_v))^{k-\ell} \nonumber \\
= & p\ln \g\int_{\log_\g p}^\infty \frac{(1-\widetilde{q}(\t))^{m-\ell}\widetilde{q}(\t)^\ell p(\t)^\ell (1-p(\t))^{k-\ell}}{\g^\t}d\t \nonumber \\
= & O(n^{-1/2}\ln n) + p\ln \g \int_0^\infty \frac{(1-q(\t))^{m-\ell}q(\t)^\ell p(\t)^\ell(1-p(\t))^{k-\ell}}{\g^\t}d\t. \label{integral}
\end{align}
The last identity comes from (i) the fact that $\widetilde{q}(\t) = q(\t)$ outside a set of total length $O(n^{-1/2}\ln n)$, (ii) the fact that the integral converges since the integrand is dominated by $\g^{-\t}$ where $\g > 1$, and (iii) the fact that $q(\t) = 0$ for $\t < 0$.

Plugging \eqref{integral} into \eqref{Exkn1} we have
\begin{equation}\label{Exkn}
\lim_{n\to\infty}\frac{\E{X_k(n)}}{n} = p\ln\g\sum_{\ell=1}^m \binom{m}{\ell}\binom{k-1}{\ell-1}\int_0^\infty \frac{(1-q(\t))^{m-\ell}q(\t)^\ell p(\t)^\ell (1-p(\t))^{k-\ell}}{\g^\t} d\t.
\end{equation}
Let
\begin{equation}\label{felldef}
f_\ell(\t) = \frac{(1-q(\t))^{m-\ell} q(\t)^\ell p(\t)^\ell (1-p(\t))^{k-\ell}}{\g^\t}.
\end{equation}
Our aim is to calculate $\int_0^\infty f_\ell(\t) d\t$.

{\bf Case 1: $\a > 1$.}\\
By Lemma \ref{constlem} (\ref{largealpha}) we have $p(\t) \geq \l_3 \zeta^\t$ for all $\t \geq 0$, where $\l_3 > 0$. Let $\psi(k) = -\log_\zeta ((k-\ell)/(C\ln k))$ for some constant $C > 0$, noting that $\psi(k)\to \infty$ when $k\to \infty$. Making $C$ large enough,
\begin{equation}\label{integralsmalltau}
\int_0^{\psi(k)} f_\ell(\t) \leq \psi(k)(1-\l_3 \zeta^{\psi(k)})^{k-\ell} \leq \psi(k) e^{-\l_3C\ln k} = O(k^{-\eta - 2}). 
\end{equation}
Here we used the fact that $f_\ell(\t) \leq (1-p(\t))^{k-\ell}$.

Again by Lemma \ref{constlem} (\ref{largealpha}) we have $p(\t) = \l_3 \zeta^\t + O(\zeta^{2\t})$ and $q(\t) = 1 - \zeta + O(\zeta^\t)$. Suppose $\t \geq \psi(k)$. Then $k\zeta^{2\t} = o_k(1)$ and
$$
f_\ell(\t) = \frac{\zeta^{m-\ell}(1-\zeta)^{\ell}(\l_3\zeta^\t)^\ell(1-\l_3\zeta^\t)^{k-\ell}}{\g^\t}\left(1 + O(m\zeta^{\t}) + O_k(k\zeta^{2\t})\right).
$$
Indeed, each of the $m$ factors involving $q(\t)$ contributes an error factor of $1+O(\zeta^\t)$ and each of the $k$ factors involving $p(\t)$ contributes an error factor of $1+O(\zeta^{2\t})$. We have $m\zeta^\t = O(\ln k / k)$ and $k\zeta^{2\t} = O(\ln^2 k / k)$, so
\begin{equation}\label{fellapprox}
f_\ell(\t) = \frac{\zeta^{\t\ell}(1-\l_3\zeta^{\t})^{k-\ell}}{\g^\t}\left(\l_3^\ell \zeta^{m-\ell}(1-\zeta)^\ell+O\bfrac{\ln^2 k}{k}\right).
\end{equation}
Note that $\l_3, \zeta, m$ and $\ell$ are independent of $k$ and $\t$.

{\bf Claim:} If $\a > 1$ there exists a constant $c_\ell$ such that
$$
\int_{\psi(k)}^\infty \frac{\zeta^{\t\ell}(1-\l_3\zeta^\t)^{k-\ell}}{\g^\t} = c_\ell k^{-\eta - \ell}  + O(k^{-\eta-\ell-1}).
$$

It will follow from the claim and \eqref{fellapprox} that for some constant $c_\ell'$,
\begin{equation}\label{integrallargetau}
\int_{\psi(k)}^\infty f_\ell(\t) d\t = c_\ell' k^{-\eta-\ell}\left(1 + O\bfrac{\ln^2 k}{k}\right).
\end{equation}

{\bf Proof of claim:}
We make the integral substitution $u = \l_3\zeta^\t$, noting that $\t = \log_\zeta (u/\l_3)$ so (recalling that $\eta = -\ln\g / \ln\zeta$, see Section \ref{confun})
$$
\g^{-\t} = \exp\left\{-\ln \g \frac{\ln (u / \l_3)}{\ln \zeta}\right\} = \bfrac{u}{\l_3}^\eta.
$$
This implies that the integral equals (up to a multiplicative constant)
\begin{align*}
\int_0^{\frac{C\l_3 \ln k}{k-\ell}} u^{\eta + \ell - 1}(1-u)^{k - \ell} du & = \int_0^1 u^{\eta+\ell-1}(1-u)^{k-\ell}du - \int_{\frac{C\l_3\ln k}{k-\ell}}^1 u^{\eta + \ell - 1}(1-u)^{k - \ell}du \\
& = B(\eta +\ell, k - \ell + 1) + O\left(k^{-C\l_3}\right)
\end{align*}
where $B(x, y) = \int_0^1 u^{x-1}(1-u)^{y-1}du$ denotes the Beta function. Here the $O(k^{-C\l_3})$ term comes from bounding $u^{\eta + \ell - 1} \leq 1$ and $1 - u \leq e^{-C\l_3\ln k / (k-\ell)}$. Taking $C$ to be large enough makes the error $O(k^{- \eta - m - 1})$ (recall that $\ell \leq m$). As $k\to \infty$, Stirling's formula provides an asymptotic expression for $B(\eta, k + 1)$:
$$
B(\eta + \ell, k -\ell + 1) = \G(\eta + \ell) k^{-\eta - \ell} + O(k^{-\eta-\ell-1}),
$$
where $\G$ denotes the Gamma function.
{\bf End of proof of claim.}

We finish the proof for $\a > 1$ by noting that by Stirling's formula, for some constant $s_\ell$
\begin{equation}\label{binom}
\binom{k-1}{\ell - 1} = s_\ell k^{\ell - 1} + O(k^{\ell - 2})
\end{equation}
Plugging \eqref{integralsmalltau}, \eqref{integrallargetau} and \eqref{binom} into \eqref{Exkn} shows that
\begin{align*}
\frac{\E{X_k(n)}}{n} & \to p\ln \g\sum_{\ell=1}^m \binom{m}{\ell}\binom{k-1}{\ell-1} \int_0^\infty f_\ell(\t) dt \\
& =  p\ln\g\sum_{\ell=1}^m \binom{m}{\ell}(s_\ell k^{\ell-1} + O(k^{\ell - 2})) (c_\ell' k^{-\eta-\ell} + O(k^{-\eta-\ell-1}\ln^2 k))  \\
& = \left(p\ln\g \sum_{\ell = 1}^m \binom{m}{\ell} s_\ell c_\ell'\right)k^{-\eta-1} + O(k^{-\eta-2}\ln^{\eta+m+3} k).
\end{align*}
Here the expression in brackets depends only on $p, m$, and this is the constant $a$ in the statement of the theorem.

{\bf Case 2:} $\a < 1$.\\
In this case we need not be as careful. By Lemma \ref{constlem} (\ref{smallalpha}) we have $1 - p(\t) = \a - \l_1/\zeta^\t + O(\zeta^{2\t})$ where $0 < \l_1 < \a$ and $\zeta > 1$, so we can write
$$
f_\ell(\t) = \a^{k-\ell} \frac{(1-q(\t))^{m - \ell} q(\t)^\ell p(\t)^{\ell}\left(\frac{1 - p(\t)}{\a}\right)^{k-\ell}}{\g^\t}
$$
and the calculation of $\a^{-(k-\ell)}\int_0^\infty f_\ell(\t) d\t$ proceeds much like the $\a > 1$ case. We find that
$$
\int_0^\infty f_\ell(\t)d\t = \a^{k-\ell}O(k^C) = \a^{k(1 + o_k(1))}
$$
for some constant $C > 0$. Summing over $\ell = 1,\dots, m$ does not affect this expression.
\end{proof}

\section{The largest component}\label{giantsec}

This section deals with connectivity properties of $G_n$. Note that $G_n$ is disconnected whp since one can show that the number of isolated vertices is $\Omega(n)$ whp. It is also the case that the set of non-isolated vertices is disconnected whp, since the probability that a vertex $v$ shares a component only with its $m$ older neighbors is a nonzero constant, as can be seen by methods similar to those used in the proof of Lemma \ref{Xlemma} below.

In the following theorem, the size of a component refers to the number of vertices in the component. Recall that $B(n) = \l\ln n$ if $\a < 1$ and $B(n) = \l n^{1/\eta}\ln n$ if $\a > 1$, for a constant $\l > 0$. Recall also that $\mathcal{D}$ denotes the event that the graph process contains zero edges at some point (see Lemma \ref{sigmalemma}).

Note that the number of vertices in $G_n$ is $pn + O(n^{1/2}\ln n)$ whp, so when $m\geq 2$ and $\a > 1$, Theorem \ref{giantthm} states that whp the number of vertices outside the giant component is $O_m(c^m n)$ for some $0 < c < 1$.
\begin{theorem}\label{giantthm}
Condition on $\overline{\mathcal{D}}$.
\begin{enumerate}[(i)]
\item There exists a $\xi = \xi(m, p) \in (0, p)$ such that the number of isolated vertices in $G_n$ is $\xi n(1 + o_n(1))$ whp. If $\a > 1$ then $\xi = O_m(c^m)$ for some $0<c<1$.
\item If $m = 1$, all components in $G_n$ have size $O(\D\log n)$ whp, where $\D$ denotes the maximum degree of $G_n$.
\item If $m \geq 2$, whp there exists a component containing at least $p(1-\xi)(1-(13/14)^{m-1})n$ vertices while all other components have size $O(\log n)$. 
\end{enumerate}
\end{theorem}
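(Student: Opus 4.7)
For part (i), I would apply Theorem \ref{degthm} with $k=0$ and $\om=\log\log n$. For $v\geq n/\om$ with $\t_v=\log_\g(pn/v)$,
$$
\Prob{d(n,v)=0 \mid\overline{\mathcal D}} = (1-\widetilde q(\t_v))^m+O(B(n)n^{-1/2}\ln^2 n).
$$
Summing over $v\leq\n_n$ and converting to the Riemann integral exactly as in \eqref{integral} yields $\E{X_0(n)\mid\overline{\mathcal D}}=\xi n(1+o(1))$ with $\xi=p\ln\g\int_0^\infty(1-q(\t))^m\g^{-\t}\,d\t$. The edge-exposure Doob martingale plus Azuma's inequality used at the start of the proof of Theorem \ref{degseqthm} carries over verbatim to upgrade this to $X_0(n)=\xi n(1+o(1))$ whp. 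Strict inequality $\xi<p$ follows from $p\ln\g\int_0^\infty\g^{-\t}\,d\t=p$ together with $q(\t)>0$ on a set of positive measure. When $\a>1$, Lemma \ref{constlem}(vi) gives $q(\t)\geq 1-\zeta-O(\zeta^\t)$, so $(1-q(\t))^m=O_m(\zeta^m)$ and $\xi=O_m(\zeta^m)$ with $\zeta<1$ by Lemma \ref{constlem}(i).

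For part (ii), note that when $m=1$ each $v>\n_H$ has a unique out-edge to a strictly older vertex, so the parent map is monotone and $\G$ (hence $G_n$) is a forest. Every component is a tree rooted either at a forest-root of $\G$ inherited from $H$ or at some $r<1_n$; its remaining vertices are exactly those descendants of $r$ in $\G$ whose ancestry chain to $r$ lies entirely in $[1_n,\n_n]$. Running the expose algorithm of Section \ref{distributionsec} from the half-edges at $r$ couples this descendant set to a CMJ of rate $\a$ run for time $\log_\g(n/r)\leq\log_\g n$, and by Lemma \ref{CMJbirthlem}(i)--(ii) every component has at most $B(n)$ vertices whp. For $\a<1$ this already gives $O(\log n)\leq O(\D\log n)$, and for $\a>1$ supercritical CMJ concentration (Kesten--Stigum type) makes both $b(\log_\g n)$ and $d(\log_\g n)$ of order $n^{1/\eta}$ whp for the CMJ started at any $H$-vertex, so $\D\geq\Omega(n^{1/\eta})$ and $B(n)=O(n^{1/\eta}\log n)=O(\D\log n)$.

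For part (iii), I would extend the expose framework to a BFS in $G_n$ starting from an arbitrary non-isolated vertex $v$, alternating between exposing half-edges $(e,j)$ at already-discovered edges (to reveal younger neighbors attaching via $e$) and following the $m$ down-edges of a newly discovered young vertex (to reveal its older neighbors); each step couples to a fresh CMJ subprocess of rate $\a$. Since $m\geq 2$, each newly discovered young vertex contributes $m-1$ sibling down-edges beyond the one through which it was reached, and using the explicit formulae for $q$ on $[0,3]$ from Section \ref{confun} each sibling independently produces a CMJ subtree of size at least $B(n)/\log n$ with probability at least $1/14$. Hence the BFS from $v$ is ``small'' only if all $m-1$ siblings die, which occurs with probability at most $(13/14)^{m-1}+o(1)$; combined with part (i) this accounts for at most $(\xi+(1-\xi)(13/14)^{m-1})n$ vertices. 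To produce a single giant I would cap each BFS at its first $n^{1/2}$ vertices, so $o(n)$ half-edges have been revealed in total, and then apply a sprinkling step exposing $\Theta(n^{1/2})$ still-free half-edges: the $\om$-concentration of $\s$ from Lemma \ref{sigmalemma} ensures any two large fragments share a common neighbor whp. Vertices with small BFS lie in subcritical branching trees and so form components of size $O(\log n)$ whp. The main obstacle is executing the sprinkling cleanly within the master-graph bookkeeping, particularly for $\a>1$ where the large fragments may already have revealed many half-edges; I would handle this by processing fragments in rounds and drawing sprinkle-edges only from the set of free edges remaining after each round.
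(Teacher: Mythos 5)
Part (i) of your proposal is essentially the paper's argument and is fine. The problems are in (ii) and (iii).

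For (ii), your route bounds every component by the total birth count $B(n)$ of a CMJ process run for time $\log_\g n$ from the root $r$, and then tries to recover the stated bound $O(\D\log n)$ by asserting $\D=\Omega(n^{1/\eta})$ whp via a ``Kesten--Stigum type'' concentration. Two things break here. First, the coupling machinery (Lemma \ref{exposelemma}) only applies to half-edges $e_0\geq mn/\om$ and only controls a CMJ process over time $O(\log_\g\om)$; a root $r<1_n$ can be arbitrarily old, so coupling its full descendant tree over time $\log_\g n$ is not supported by anything proved, and the accumulated coupling errors over that horizon are not controlled. Second, the lower bound $\D=\Omega(n^{1/\eta})$ is a substantive unproven claim about the maximum degree, which the paper explicitly declines to address. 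The intended argument avoids both issues: since all edges of a component of $G_n$ lie in $E_n$, they are within a factor $\g^{1+o(1)}$ of each other in age; one therefore exposes only the $d(n,u)$ young edges pointing at the root $u$, couples each to a rate-$2\a$ CMJ process run for time at most $1+o(1)$ (geometric with constant parameter, hence $O(\log n)$ whp), and sums to get $|T_u|=O(d(n,u)\log n)\leq O(\D\log n)$ directly, with no lower bound on $\D$ needed.

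For (iii), your high-level plan (BFS with CMJ couplings, $m-1$ new free edges per discovered young vertex, sprinkling to merge large fragments) matches the paper's strategy, but the quantitative core is missing and your stated mechanism for the constant $(13/14)^{m-1}$ is wrong. You claim each sibling ``independently produces a CMJ subtree of size at least $B(n)/\log n$ with probability at least $1/14$,'' so that smallness requires all $m-1$ siblings to die. For $\a<1$ every individual CMJ process is subcritical ($q(\t)\to 0$ exponentially), so no single sibling's subtree is ever large; the giant exists only because the count of free edges, tracked over many rounds, is a supercritical process. The actual argument proves (Lemma \ref{Xlemma}, via three numerical claims bounding $\Prob{|Y_1(x_0)\cap E_n|=0}$ and $\Prob{|Y_1(x_0)\cap E_n|=1}$ uniformly in $\a>1/2$) that the free-edge count stochastically dominates a random walk started at $m-1$ with step distribution $\Prob{-1}=0.26$, $\Prob{0}=0.46$, $\Prob{+1}=0.28$; the gambler's-ruin probability $(0.26/0.28)^{m-1}=(13/14)^{m-1}$ is where the constant comes from. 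Without that domination you have neither the $(13/14)^{m-1}$ bound nor the $O(\log n)$ bound on small components. Finally, your first-moment accounting of small vertices does not yield the whp lower bound on the giant's size; you need the second-moment/Chebyshev step showing the number of small non-isolated vertices concentrates around its mean.
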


The remainder of the section is devoted to the proof of this theorem. Let $\om = \log \log n$. We fix a feasible and $\om$-concentrated $\s$, see Lemma \ref{sigmalemma}. We also fix $\e > 0$ with $1/2-\e > 1/\eta$ if $\a > 1$ and $\e < 1/2$ if $\a < 1$.

We first prove (i). The existence of $\xi$ is provided by Theorem \ref{degseqthm} (ii), so we need only prove that $\xi = O_m(c^m)$ for some $0<c<1$ when $\a > 1$. Fix a vertex $v\geq n/\om$. By Theorem \ref{degthm} the probability for $v$ to be isolated is $(1-q(\t))^m$ for some $\t$. By Lemma \ref{constlem} (\ref{largealpha}), $\a > 1$ implies $1 - q(\t) \leq \zeta < 1$ for all $\t$, so the probability of being isolated is at most $\zeta^m$. By linearity of expectation we expect at most $\zeta^m pn + O(n/\om)$ vertices to be isolated, accounting for the $n/\om$ vertices for which Theorem \ref{degthm} does not apply. Theorem \ref{degseqthm} shows that the number of vertices of degree zero is within $O(n^{1/2}\ln n)$ of its mean with high probability, so the number of isolated vertices is at most $2p\zeta^m n$ whp. This finishes part (i), and the remainder of the section is devoted to proving (ii), (iii).

The proof will rely heavily on the master graph $\G$ defined in Section \ref{mastersec}. We will define an algorithm that searches for a large connected edge set in $\G$, which remains connected when restricting to the edge set $E_n$ of $G_n$.

Orient each edge $\{u, v\}$ in $\G$ from larger to smaller, i.e. $v\to u$ if $v > u$. Then $d^+(v) = m$ for all $v\geq 1_H$ and $d^+(v) = 0$ for $v < 1_H$. When $m = 1$, this implies that $\G$ is a forest in which each tree is rooted in $\{1,\dots,1_H - 1\}$, and any edge is oriented towards the root in its tree. Restricting to $E_n$ breaks the trees into smaller trees. Let $v \in V_n$. Then there exists a unique vertex $u \notin V_n$ that is reachable from $v$ via directed edges in $E_n$. The connected component of $v$ is $T_u$, where $T_u$ is the tree rooted at $u$ of vertices which can reach $u$ via a directed path. This shows that the connected components in $G_n$ are $\{T_u : u \notin V_n\}$ when $m = 1$.

We now show that $|T_u| = O(d(n, u)\log n)$ for all $u$ whp. Let $u \notin V_n$ and let $v_1,\dots,v_k$ be the neighbors of $u$ in $V_n$, and let $e_i$ be the unique edge oriented out of $v_i$ for $i=1,\dots,k$. Expose $(e_1,2),\dots,(e_k, 2)$. For any edge $e$ found, we expose $(e, 1)$ and $(e, 2)$. Repeating the coupling argument of Lemma \ref{exposelemma} one can show that the of descendants of $e_1$ can modelled by a CMJ process of rate $2\a$. The number of descendants of $e_1$ is geometrically distributed with rate $e^{-2\a\t_1}$ for $\t_1 = \log_\g(pmn / e_1)\leq 1 + O(n^{-1/2}\ln n)$. With high probability each $e_i$ has $O(\log n)$ descendants, and it follows that whp $|T_u| = O(d(n, u)\log n)$ for all $u\notin V_n$. In particular, the largest component has size $O(\D\log n)$ where $\D$ denotes the maximum degree of $G_n$. In this paper we make no attempt to bound $\D$.

Let $m\geq 2$ for the remainder of the section. We now loosely describe the intuition that will help us prove the theorem. Suppose $e_1,\dots,e_m$ are the $m$ edges oriented out of $v\in V_n$ in $G_n$. We imagine splitting $v$ into $m$ smaller vertices $v_1,\dots,v_m$ with $d^+(v_i) = 1$ for each $i$. In Section \ref{distributionsec} we saw that each edge $e$ directed into $v$ can be traced back to a unique $e_i$, in that $e$ either directly chooses $(e_i, 2)$ or chooses $(e', 1)$ for some $e'$ that chooses $(e_i, 2)$, and so on. If $e$ can be traced back to $e_i$, we make it point to $v_i$. Let $G_n'$ be the graph in which all vertices in $V_n$ are split into $m$ parts in this fashion. In $G_n'$ vertices have out-degree $0$ or $1$, and we can define trees $T_u$ as above for $u\notin V_n$. Then each $v\in V_n$ is associated with $m$ trees, namely the $m$ connected components of $v_1,\dots,v_m$ in $G_n'$.

We now make this precise. Let $u\notin V_n$. In Section \ref{distributionsec} we saw how to find the neighbors of $u$ in $V_n$ by exposing $(e_1, 2),\dots,(e_m,2)$ for the $m$ edges $e_1,\dots,e_m$ oriented out of $u$ in $\G$. We start building $T_u$ by letting $u$ be the root, and the children of $u$ each vertex $v\in V_n$ that is adjacent to $u$. For such a $v$, let $e_v$ be an edge that was found when exposing $(e_1,2),\dots,(e_m, 2)$. Expose $(e_v, 2)$ to find all neighbors of $v$ that can be traced back to the edge $e_v$. The children of $v$ in $T_u$ will be all neighbors of $v$ that are incident to some edge that can be traced back to the edge $e_v$. Repeat this for all $v\in V_n$ in $T_u$. Note that $T_u$ may not be a tree, since two edges adjacent to the same vertex may be found when exposing edges.

With this definition, we can partition the edges of $G_n$ into $\{T_u : u\notin V_n\}$. In particular, for each $e\in E_n$ there is a unique vertex $u\notin V_n$ such that $e\in T_u$. Write $T_e = T_u$. The idea behind the algorithm described in detail below is to do a ``breadth-first search on the $T_u$''. Starting with a free edge $x_0\in E_n$, we determine (part of) $T_{x_0}$. For any edge $f\in T_{x_0}$, we expect the other $m-1$ edges oriented out of the same vertex as $f$ to be free. These $m-1$ edges provide the starting point for $m-1$ future rounds of the algorithms, and in each round a new $T_u$ is determined.

For a vertex $v_0$ let $C_\G(v_0), C_G(v_0)$ be the set of edges in the connected component of $v_0$ in $\G, G$, respectively. Starting with a vertex $v_0$ and the graph $\G_0\in \mG(\emptyset,\emptyset)$, we use the following algorithm to find a set $C(v_0) \subseteq C_G(v_0)$. An explanation of the algorithm follows immediately after its description. See Figure \ref{fig:algo} for an example outcome of one round of the algorithm.
\begin{enumerate}
\item[0.] If $v_0\in V_n$ let $C = X = \{m(v_0 - 1) + 1,\dots,mv_0\}$, and $A = R = \emptyset$. If $v_0 \notin V_n$, set $C = X = A = R = \emptyset$ and $Q(x_0) = \{(m(v_0 - 1) + 1, 2), \dots, (mv_0, 2)\}$ and go to step 3.
\item If $X = \emptyset$, stop. If $X\neq\emptyset$ choose an edge $x_0\in X$ and remove it from $X$. Set $Q(x_0) = \{(x_0, 1), (x_0, 2)\}$, $X_1(x_0) = \emptyset$ and $Y_1(x_0) = \emptyset$.
\item Choose $(x_1, j_1) \in \OM(x_0) \setminus R$ uniformly at random.
\begin{enumerate}[(2.1.)]
\item If $x_1 \in A$, do nothing.
\item If $x_1\in E_n$, add $D(x_1)$ to $X_1(x_0)$.
\item If $x_1\notin E_n$ and $j_1 = 2$, add $(x_1, 2)$ to $H$.
\item If $x_1\notin E_n$ and $j_1 = 1$, choose $(x_2, j_2) \in \widetilde{\OM}(x_1)$ uniformly at random. Repeat until one of the following holds:
\begin{enumerate}[(2.4.1.)]
\item $j_1 = j_2 = \dots = j_{k-1} = 1$ and $j_k = 2$. Add $(x_1, 1), \dots, (x_{k-1}, 1)$ and $(x_k', 2)$ for all $x_k'\in D(x_k)$ to $Q(x_0)$. Set $\OM(x_i) = \emptyset$ for $i=1,2,\dots,k-1$.
\item $j_1 = j_2 = \dots = j_{k-1} = 1$ and $x_k < m\n_H$. Let $v$ be the vertex (in $H$) corresponding to $(x_k, j_k)$. Add $(x_1, 1),\dots, (x_{k-1}, 1)$ to $Q(x_0)$, along with $(x', j')$ for all edges $x'$ incident to $v$ in $H$, for the proper choice of $j'$.
\end{enumerate}
\end{enumerate}
Add $x_0, x_1,\dots,x_{k-1}$ to $A$.
\item While $Q(x_0)$ is nonempty, repeat the following.
\begin{enumerate}[(3.1.)]
\item Pick $(h, j)\in Q(x_0)$ and remove it from $Q(x_0)$. Let $Y' = \{(h, j)\}$. Add $h$ to $Y_1(x_0)$. While $Y'\neq\emptyset$ repeat the following:
\begin{enumerate}
\item[(3.1.1)] Choose $(y, i) \in Y'$ and remove it from $Y'$. For each $e\notin X\cup A$ with $(y, i) \in \OM(e)$, query whether $e$ chooses $(y, i)$, i.e. set $\f(e) = (y, i)$ with probability $1 / |\OM(e)|$ and remove $(y, i)$ from $\OM(e)$ otherwise. If $e$ chooses $(y, i)$ then add $(e, 1)$ to $Y'$ and $Y_1(x_0)$, and add all edges $f\neq e$ with $\lceil f/m\rceil = \lceil e/m\rceil$ to $X_1(x_0)$ and $X$. If $e\in E_n$ then also add $(e, 2)$ to $Y'$ and $Y_1(x_0)$.
\end{enumerate}
\end{enumerate}
\item Set $C \leftarrow C \cup X_1(x_0) \cup (Y_1(x_0)\cap E_n)$. Go to step 1.
\end{enumerate}

\begin{figure}
\begin{center}
\includegraphics[width = 0.9\textwidth]{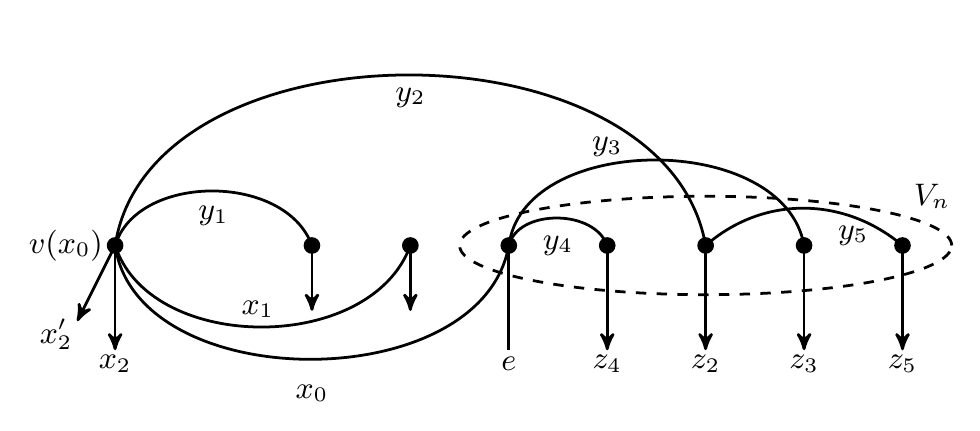}
\end{center}
\caption{A typical round of the algorithm when $m = 2$ and $x_1\notin E_n$. Free edges are denoted by arrows, and $e$ is an edge with $\lceil e/m \rceil = \lceil x_0 / m\rceil$ found in a previous round which may or may not be free. In this example, $Y_1(x_0) = \{y_1, y_2, y_3, y_4, y_5\}$ and $X_1(x_0) = \{z_2, z_3, z_4, z_5\}$. Note that each member of $Y_1(x_0) \cap E_n = \{y_2, y_3, y_4, y_5\}$ contributes exactly $(m-1)$ free edge(s) to $X_1(x_0)$. Edges $y_2, y_3, y_4, y_5, z_2, z_3, z_4, z_5$ are added to $C$, which already contains $x_0$ and $e$. Half-edges $(x_0, 1), (x_0, 2), (x_1, 1), (x_2, 2), (x_2', 2), (y_1, i), (y_2, i), (y_3, i), (y_4, i), (y_5, i)$, $i=1,2,$ are added to $R$, and edges $x_0, x_1, y_1, y_2, y_3, y_4, y_5$ to $A$. Edges $x_0, y_2, y_3, y_4, y_5$ are in $T_{x_0}$. Note that $T_{x_0}$ may contain more edges, not pictured, if some edge randomly chose $(x_0, 1)$ in a previous round.}
\label{fig:algo}
\end{figure}

{\bf Explanation of algorithm:} We call steps 1--4 a {\em round} of the algorithm. At the beginning of each round, we choose some free edge $x_0\in E_n$ that has been determined to be in $C\subseteq C_G(e_0)$. The objective of the round is to build the set $T_{x_0}$ in order to find free edges $X_1(x_0)$ which share a component with $x_0$. See Figure \ref{fig:algo} for a typical outcome of a round in which $x_1\notin E_n$. Note that part of $T_{x_0}$ may have been found in a past round.

Step 0 is a preliminary step; if $v_0\in V_n$ then we feed the $m$ free edges adjacent to $v_0$ into $X$, and if $v_0\notin V_n$ then we find $T_{v_0}$ in step 3 and feed any free edges adjacent to $T_{v_0}$ into $X$ in step 4. We call this round 0.

The edge $x_0$ makes a random choice $(x_1, j_1)$. If $x_1\in E_n$ then $T_{x_0} = T_{x_1}$ and we cut the search short and find all of $T_{x_0}$ in a future round. The reason for this is mainly to make calculations easier in Lemma \ref{Xlemma}. In the current round we will find the part of $T_{x_0}$ that can be traced back to $x_0$.

The edge $x_0$ has a fixed endpoint $\lceil x_0/m\rceil$ and a random endpoint $v(x_0)$. If $x_1\notin E_n$ then $v(x_0)\notin V_n$, and we will have $T_{x_0} = T_{v(x_0)}$. In step 2 we determine $v(x_0)$. We assign $x_0$ to $(x_1, j_1)$, and if $j_1 = 1$ we assign $x_1$ to $(x_2, j_2)$, and so on until one of two things happen. If $j_1 = j_2 = \dots = j_{k-1} = 1$ and $j_k = 2$ for some $k$ then $v(x_0) = \lceil x_k / m\rceil$. If $j_1 = \dots = j_{k} = 1$ and $x_k \leq m\n_H$, then $v(x_0) = v(x_1) = \dots = v(x_k)$, noting that $v(x_k)$ is not random when $x_k \leq m\n_H$.

At the start of any round, we have sets $A, R, X$ and a partially generated graph $\widetilde{\G}\in \mG(A, R)$ such that if $e\in A$ then $(e, 1) \in \OM(x)$ only if $x\in X$. For this reason, it is not possible that $x_j\in A$ for any $j\geq 2$, since we only consider $j\geq 2$ when $x_1\notin E_n$, so $x_1\notin X$.

Assuming $v(x_0)$ was found, in step 3 we find $T_{x_0}$ using a modification of the {\em expose} algorithm in Section \ref{distributionsec}, noting that part of the tree has already been built. We do this by exposing (i) $(e, 2)$ for the $m$ free edges $e$ adjacent to $v(x_0)$, (ii) $(e, 1)$ for all edges determined to be in $T_{x_0}$, and (iii) $(e, 2)$ for the edges in $T_{x_0}$ that are in $E_n$. We take care not to include edges in $X$, and in particular if one edge $e$ is determined to be in $X$ then we immediately place the other $m-1$ edges adjacent to $\lceil e/m\rceil$ in $X$. These rules are included to avoid $X$ decreasing in size.

Entering step 4 we have a set $Y_1(x_0)$ of non-free edges that are in $T_{x_0}$ and a set $X_1(x_0)$ of free edges whose fixed endpoint is also the fixed endpoint of some edge in $Y_1(x_0) \cap E_n$. If $x_1\in E_n$ we have $|X_1(x_0)| = m + (m-1)|Y_1(x_0)\cap E_n|$, and if $x_1\notin E_n$ then $|X_1(x_0)| = (m-1)|Y_1(x_0)\cap E_n|$. 
\\{\bf End of explanation.}

If the algorithm terminates, i.e. $X = \emptyset$ at some point, then $C = C_G(v_0)$. By estimating the round $T$ at which the algorithm terminates, we can estimate the size of $C_G(e_0)$ via Lemma \ref{BClemma} (ii) below. Let $E_c = \{e : e > mn / \om\}$ be the set of edges for which Lemma \ref{exposelemma} applies. In Lemma \ref{Xlemma} we estimate $T$ by showing that if $R\cap E_c$ (taken to mean $\{e\in E_c : (e, 1)\in R \ \mbox{or}\ (e, 2)\in R\}$) is not too large then $\{|X_t| : t\geq 0\}$ is bounded below by a random walk with positive drift.

\begin{lemma}\label{Xlemma} Suppose $m\geq 2$ and let $Z$ be a random variable taking values in $\{0, 1, 2\}$ with $\Prob{Z = 0} = 0.26$ and $\Prob{Z = 1} = 0.46$. Suppose a round starts at $x_0\in X$ and with $|R\cap E_c| \leq n^{1/2 + \e}\log_\g^3 n$. Then $|X_1(x_0)|$ is stochastically bounded below by $Z$.
\end{lemma}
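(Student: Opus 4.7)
The plan is to split on the outcome of $x_0$'s random choice $\f(x_0) = (x_1, j_1)$ and to apply Lemma~\ref{exposelemma} to each exposure carried out in step~3 of the round. Set $\t_0 = \log_\g(pmn/x_0) \in [0, 1]$; the hypothesis $x_0 \in E_n$ ensures this range (up to an $O(n^{-1/2}\ln n)$ fluctuation). The hypothesis $|R \cap E_c| \leq n^{1/2+\e}\log_\g^3 n$ combined with the $\om$-concentration of $\s$ gives $|\widetilde{\OM}(x_0)| = \tfrac{2\mu x_0}{pm}(1+o(1))$, from which a direct count produces
$$\pi(\t_0) := \Prob{x_1 \in E_n} = \frac{p(1 - \g^{\t_0 - 1})}{2p-1} + o(1),$$
so $\pi(0) = 1$ while $\pi(1) = 0$.

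In Case~A ($x_1 \in E_n$, probability $\pi(\t_0)$), step~2.2 places the $m-1$ sibling edges $D(x_1)$ directly into $X_1(x_0)$, giving $|X_1(x_0)| \geq m - 1$; the cascade from exposing $(x_0, 1)$ and $(x_0, 2)$ in step~3 adds more. In Case~B ($x_1 \notin E_n$, probability $1 - \pi(\t_0)$), the chain in step~2.4 produces $v(x_0) \notin V_n$ and loads $Q(x_0)$ with $(x_0, 1)$, $(x_0, 2)$, and $(x_k', 2)$ for the $m-1$ siblings $x_k'$ of the terminal chain edge $x_k$ (these being $m-1$ of the $m$ edges incident to $v(x_0)$). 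Each half-edge in $Q(x_0)$ is then exposed and, by Lemma~\ref{exposelemma} (applied with updated $A, R$ sets that still satisfy its hypothesis, since $|A|, |R|$ grow only by $O(\log n)$ during step~2), contributes (approximately independently) a $G(p(\cdot), q(\cdot))$ number of cascade edges to $Y_1(x_0) \cap E_n$: parameter $\t_0$ for the $(x_0, \cdot)$ exposes and parameter $\t_v := \log_\g(pn/v(x_0)) \in (1, \t_0 + 1]$ for each $(x_k', 2)$. Each cascade edge brings $m-1$ siblings to $X_1(x_0)$.

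Combining the two cases yields explicit lower bounds of the form
$$\Prob{|X_1(x_0)| \geq k} \geq \pi(\t_0)\, A_k(\t_0) + (1 - \pi(\t_0))\, B_k(\t_0, \t_v) \qquad (k = 1, 2),$$
where $A_k, B_k$ are given by elementary tail expressions in $p(\cdot), q(\cdot)$. For $m \geq 3$, $|X_1(x_0)|$ takes values in $\{0\} \cup [2, \infty)$ (being a multiple of $m-1 \geq 2$ plus the $m-1 \geq 2$ from step~2.2 whenever Case~A occurs), so only the bound $\Prob{|X_1(x_0)| = 0} \leq 0.26$ needs checking, which follows easily in either regime of $\t_0$ (small $\t_0$ makes Case~A overwhelming; moderate $\t_0$ makes the $(x_0, \cdot)$ cascade nonempty).

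The main obstacle is the $m = 2$ case, where $|X_1(x_0)|$ can equal $1$ and both bounds $\Prob{|X_1(x_0)| = 0} \leq 0.26$ and $\Prob{|X_1(x_0)| \leq 1} \leq 0.72$ must be verified. Here the worst case is $\t_0$ close to $1$ (where Case~A vanishes) with $\a$ close to $1/2$ (where $q(1) = 1 - e^{-\a}$ is smallest). The $(x_0, \cdot)$ cascade alone gives a factor $(1-q(1))^2 = e^{-2\a}$ which is not small enough on its own; the cascade from the $(x_k', 2)$ expose, with parameter $\t_v \in (1, 2)$ and explicit formula $q(\t+1) = 1 - (e^\a - \a\t)^{-1}$ from Section~\ref{confun}, supplies the missing factor. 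I would average over the (approximately uniform) distribution of $\t_v$ induced by $v(x_0)$ being uniform on $\big(x_0/(m\g),\, 1_n - 1\big)$, and then minimize numerically over $\t_0 \in [0, 1]$ and $\a > 1/2$. The slack in the constants $0.26$ and $0.46$ is chosen precisely so this numerical check succeeds, giving $\E{Z} = 1.02 > 1$ --- the positive drift needed for the random walk $(|X_t|)$ analysis used later to prove Theorem~\ref{giantthm}.
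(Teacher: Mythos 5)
Your overall strategy coincides with the paper's: condition on whether $x_1\in E_n$, observe that for $m\geq 3$ only the bound $\Prob{|X_1(x_0)|=0}\leq 0.26$ needs checking, and reduce the $m=2$ case to tail bounds on the number of $E_n$-edges produced by the exposures of the round, using the explicit formulas for $q(\t)$ on $[1,3]$ and an optimization over $\a>1/2$ and $\t_0\in[0,1]$. Your formula for $\Prob{x_1\in E_n}$ agrees with the paper's \eqref{x1notinEn}. One bookkeeping correction: in Case A the algorithm adds all of $D(x_1)$, i.e.\ the $m$ edges sharing $x_1$'s fixed endpoint, so $|X_1(x_0)|\geq m\geq 2$ there (the paper states $|X_1(x_0)|=m+(m-1)|Y_1(x_0)\cap E_n|$ in this case). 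This is what confines \emph{both} failure events to $\{x_1\notin E_n\}$; with your count of $m-1$ the $m=2$ analysis would also have to charge Case A against the $k=1$ bound.

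The genuine gap is in your inventory of what is exposed in Case B. Step 2.4.1 loads $Q(x_0)$ not only with $(x_0,1),(x_0,2)$ and the half-edges $(x_K',2)$ for $x_K'\in D(x_K)$ at the terminal vertex, but also with the intermediate chain half-edges $(x_1,1),\dots,(x_{K-1},1)$; and the terminal vertex's parameter satisfies only $\t_K\leq\t_0+K$, not $\t_K\leq\t_0+1$ (your stated range is correct only when the chain has length $K=1$). These two omissions compound. For $\a<1$ Lemma \ref{constlem}(\ref{smallalpha}) gives $q(\t)\to 0$, so when the chain is long the terminal exposures find an $E_n$-edge with probability tending to $0$; the quantity your accounting would have to beat is roughly $\t_0e^{-2\a\t_0}\sum_{k\geq1}2^{-k}\bigl(1-q(\t_0+k)\bigr)^2$, which near $\a=1/2$, $\t_0=1$ evaluates to about $e^{-1}\cdot 0.85\approx 0.31>0.26$. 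The paper's Claim A escapes this precisely by using the intermediate exposures: for $K\geq 2$ it bounds the failure probability by $(1-q(\t_0+1))(1-q(\t_0+2))$ coming from $(x_1,j_1)$ and $(x_2,j_2)$, and only for $K=1$ does it integrate over the position of the terminal vertex. Since the constants $0.26$ and $0.46$ leave almost no slack (the paper's final bound for the first event is about $0.251$ against a target of $0.255$), the looser accounting cannot be rescued by tuning; the intermediate chain exposures are essential. Separately, the paper replaces your proposed numerical minimization over $(\a,\t_0)$ by an analytic one (Claim C shows the relevant integrand is decreasing in $\a$ and evaluates the limit at $\a=1/2$), which you would also need in order to make the verification rigorous.
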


The following lemma shows that if $R\cap E_c$ is too large for the bounds in Lemma \ref{Xlemma} to apply, then we have found a large component.

\begin{restatable}{lemma}{BClemma}\label{BClemma}
Let $C_t, R_t, X_t$ denote the states of $C, R, X$ after $t$ rounds of the algorithm. 
\begin{enumerate}[(i)]
\item There exists a constant $\l > 0$ such that $|R_t| \leq \l |C_t|\log_\g^3 n$ for all $t$ with probability $1 - o(n^{-1})$.
\item For all $t$, $\frac12 |C_t| \leq |X_t| + t \leq |C_t|$.
\end{enumerate}
\end{restatable}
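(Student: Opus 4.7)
The plan is to treat part (ii) as a deterministic combinatorial identity and part (i) as a high-probability CMJ comparison.

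For (ii), I would proceed by induction on the round index $t$. Each round removes one edge $x_0$ from $X$, puts the set $X_1(x_0)$ of newly revealed free edges back into $X$, and appends the disjoint union $X_1(x_0) \cup (Y_1(x_0) \cap E_n)$ to $C$ (disjoint because $X_1(x_0)$ consists of free edges while $Y_1(x_0) \cap E_n$ consists of already-assigned ones). Hence
\begin{align}
|X_{t+1}| &= |X_t| - 1 + |X_1(x_0)|,\\
|C_{t+1}| &= |C_t| + |X_1(x_0)| + |Y_1(x_0) \cap E_n|.
\end{align}
The lower bound $|X_t| + t \leq |C_t|$ is then immediate from $|Y_1(x_0) \cap E_n| \geq 0$. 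For the upper bound, the key structural fact already articulated in Step 3.1.1 is that each $e \in Y_1(x_0) \cap E_n$ contributes the $m - 1$ sibling edges at $\lceil e / m \rceil$ to $X_1(x_0)$; when $m \geq 2$ this gives $|X_1(x_0)| \geq |Y_1(x_0) \cap E_n|$, so the quantity $|C_t| - 2|X_t|$ grows by at most $2$ per round, yielding $|C_t| \leq 2(|X_t| + t)$. The base case $t = 0$ is a direct check of Step 0 in the two cases $v_0 \in V_n$ and $v_0 \notin V_n$.

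For (i), a single round adds to $R$ the half-edges processed during the breadth-first exploration of Step 3 (one per element of $Y_1(x_0)$ and two for those additionally in $Y_1(x_0) \cap E_n$, since both orientations are enqueued), together with the $O(k)$ half-edges $(x_i, 1)$ produced in Step 2's chain, where $k$ is geometrically distributed and is $O(\log n)$ with probability $1 - o(n^{-2})$. The coupling used to prove Lemma \ref{exposelemma} identifies this exploration with a CMJ process of rate $\a$ and lifetime $1$, under which $|Y_1(x_0)|$ corresponds to the total number of births $b(1)$ and $|Y_1(x_0) \cap E_n|$ to the still-alive count $d(1)$. Lemma \ref{CMJbirthlem}(iii), with $\l$ enlarged to drive its per-application failure probability down to $o(n^{-2})$, then gives $|Y_1(x_0)| \leq \l \log_\g^2 n \cdot (|Y_1(x_0) \cap E_n| + 1)$, producing a per-round estimate $|R_{t+1}| - |R_t| \leq \l' \log_\g^3 n \cdot (|C_{t+1}| - |C_t|) + O(\log_\g^3 n)$. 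Summing and invoking part (ii) in the form $t \leq |C_t|$ yields $|R_t| \leq \l'' \log_\g^3 n \cdot |C_t|$, and a union bound over the at most $O(n)$ rounds upgrades the per-round failure to the claimed $1 - o(n^{-1})$ total.

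The main obstacle is the apparent circularity: the coupling of Lemma \ref{exposelemma} is itself conditional on $|R_t|, |A_t| = O(n^{1/2 + \e} \log^k n)$, so a bound on $|R_t|$ is used to justify the very argument producing it. I would resolve this with a standard stopping-time argument, letting $T^\ast = \min\{t : |R_t| > n^{1/2 + \e} \log_\g^3 n\}$ and running the induction only on $[0, T^\ast]$, where the coupling hypotheses hold and the estimate $|R_t| \leq \l'' |C_t| \log_\g^3 n$ propagates. The conclusion extends to all $t$ because on the event $T^\ast < \infty$ one must already have $|C_{T^\ast}| \geq n^{1/2 + \e} / \l''$, i.e., a near-linear component has been revealed before the coupling could fail, which is more than adequate for the subsequent invocations of the lemma in the remainder of Section \ref{giantsec}.
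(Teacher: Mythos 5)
Your part (ii) is essentially the paper's argument: the same per-round identities $|X_{t}| - |X_{t-1}| = |X_1(x_0)| - 1$ and $|C_t| - |C_{t-1}| = |X_1(x_0)| + |Y_1(x_0)\cap E_n|$, combined with $|Y_1(x_0)\cap E_n| \leq |X_1(x_0)|/(m-1) \leq |X_1(x_0)|$ coming from the $(m-1)$ sibling edges, telescoped over rounds. Nothing to add there.

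Your part (i) reaches the right conclusion by a genuinely different route at the one step that requires real work, namely controlling the reveals that find nothing in $E_n$. The paper charges per \emph{exposure}: it shows via Lemma \ref{exposelemma} that each exposed half-edge in $E_c$ is ``productive'' (finds at least one edge of $E_n$) with probability at least $q(\log_\g\om) \geq \om^{-c}$, bounds the number of productive exposures below by a binomial variable, and applies Hoeffding to conclude that the total number of exposures $s$ satisfies $s \leq I\om^c \leq |C_t|\om^c$. You instead charge per \emph{round}: each round issues only $|Q(x_0)| = O(\log n)$ initial exposures, each unproductive one wastes at most $\l\log_\g^2 n$ reveals by Lemma \ref{CMJbirthlem}(iii), and the number of rounds is at most $|C_t|$ by part (ii). This avoids the concentration argument entirely and is arguably cleaner; your stated intermediate bound should read $|Y_1(x_0)| \leq \l\log_\g^2 n\,(|Y_1(x_0)\cap E_n| + |Q(x_0)|)$ rather than ``$+1$'', since Lemma \ref{CMJbirthlem}(iii) applies exposure by exposure, but your final per-round estimate already absorbs the extra $O(\log n)$ factor so this is cosmetic. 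Your stopping-time resolution of the circularity is also exactly how the surrounding text of Section \ref{giantsec} uses the lemma (a component is declared large once $|R\cap E_c|$ reaches $n^{1/2+\e}\log_\g^3 n$), and your observation that the per-application failure probability of Lemma \ref{CMJbirthlem} must be pushed to $o(n^{-2})$ to survive the union bound over $O(n)$ applications is a point the paper glosses over. The one technical wrinkle you omit is the half-edges $(e,j)$ with $e \leq mn/\om$, to which Lemmas \ref{exposelemma} and \ref{CMJbirthlem} do not apply; the paper handles these by revealing $(e,j)$ directly and replacing it in the queue by $(f,1)$ for each $f\in\f^{-1}(e,j)$, forming $Q'(x_0)$, so that every remaining exposure is of an edge in $E_c$. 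Your per-round accounting survives this substitution provided $|Q'(x_0)|$ stays polylogarithmic, which holds whp, so the omission is repairable rather than fatal.
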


The proofs of Lemmas \ref{Xlemma} and \ref{BClemma} are postponed to the end of this section. Suppose the algorithm is run starting at some vertex $v_0$. If at some point $|R\cap E_c| \geq n^{1/2 + \e}\log_\g^3 n$ then we conclude that $|C_G(v_0)| \geq \l^{-1} n^{1/2 + \e}$, and say that the component (and every edge and vertex in it) is {\em large}. If the algorithm terminates with $|X| = 0$ then we say that the component is {\em small}.

As long as $|R\cap E_c| < n^{1/2 + \e}\log_\g^3 n$ we will bound $|X_t|$ below by a random walk $|X_0| + \sum_{i = 1}^t (Z_i - 1)$ where the $Z_i$ are independent copies of $Z$ as defined in Lemma \ref{Xlemma}. Here $X_0$ is the state of $X$ after round $0$, and we have $|X_0| = m$ if $v_0 \in V_n$, $|X_0| = 0$ if $v_0\notin V_n$ is isolated in $G_n$, and $|X_0| \geq m-1$ if $v_0\notin V_n$ is non-isolated in $G_n$.

The rest of the proof follows from four separate claims.

{\bf Claim 1: Small components have size $O(\log n)$.} Let $X_t, R_t$ denote the states of the sets $X, R$ after $t$ rounds of the algorithm, i.e. when steps 1--4 have been executed $t$ times. Let $T$ denote the minimum $t > 0$ for which $X_t = \emptyset$. We have $|C_G(e_0)| = |C_T|$, so by Lemma \ref{BClemma} (ii), $\frac12 |C_G(e_0)| \leq T \leq |C_G(e_0)|$ with probability $1 - o(n^{-1})$. We bound the probability that $c\log n \leq T \leq n^{1/2 + \e}$ for some $c > 0$ to be chosen.

Suppose $t < T$. Since $|X_{t + 1}| \geq |X_t| - 1$ for all $t$, we must have $0 = |X_T| \geq |X_t| - (T - t)$, so $T \geq |X_t| + t$. Conditioning on Lemma \ref{BClemma}, $T \leq n^{1/2 + \e}$ implies that for all $t < T$,
$$
|R_t| \leq 2\l(|X_t| + t)\log_\g^3 n \leq 2\l n^{1/2 + \e}\log_\g^3 n,
$$
so
$$
\Prob{c\log n \leq T \leq n^{1/2 + \e}} \leq \mbox{Pr}\left\{c\log n\leq T \leq n^{1/2 + \e} \middle| |R_t| \leq 2\l n^{1/2 + \e}\log_\g^3 n \mbox{ for $t\leq T$}\right\}.
$$
Conditioning on $|R_t|\leq 2\l n^{1/2 + \e}\log_\g^3 n$, Lemma \ref{Xlemma} applies. We couple $|X_t| - |X_{t-1}|$ to independent copies $Z_t - 1$ of $Z - 1$, so if $|X_0|$ denotes the size of $X$ after round 0,
$$
|X_t| = |X_0| + \sum_{i=1}^t (|X_i| - |X_{i-1}|) \geq m-1 + \sum_{i=1}^{t} (Z_i - 1).
$$
Here $|X_0| \geq m-1$ whenever $T > 0$.

The process $W_t = m-1 + \sum_{i=1}^t (Z_i - 1)$ is a random walk with $W_{t} - W_{t-1} \in \{-1,0,1\}$ and $\E{W_{t}-W_{t-1}} = \E{Z_{t} - 1} = 0.02$. Choosing $c > 0$ large enough, Hoeffding's inequality \cite{hoef} shows that
$$
\Prob{\exists t \geq c\log n : W_t = 0} \leq \sum_{t\geq c\log n} \Prob{Z_1 + \dots + Z_t < 1.01t} = o(n^{-1}),
$$
and since $|X_t| \geq W_t$, it follows that with probability $1 - o(n^{-1})$ the algorithm either terminates after at most $c\log n$ steps, or $T \geq n^{1/2 + \e}$, in which case the component is large. Since $\frac12|C_G(e_0)| \leq T\leq |C_G(e_0)|$ with probability $1-o(n^{-1})$, and the number of components is $O(n)$, all small components have size at most $c\log n$ with high probability.

{\bf Claim 2: The probability for a non-isolated $v_0$ to be in a small component is at most $(13/14)^m$.} Recall that in a small component, $|X_t| \geq W_t$ for a random walk $W_t$ as above. Since $W_0 \geq m-1$, we have
$$
\Prob{\exists t\leq c\log n : |X_t| = 0} \leq \Prob{\exists t : W_t = 0} = \bfrac{0.26}{0.28}^{m-1} = \bfrac{13}{14}^{m-1},
$$
see e.g. \cite[Exercise 5.3.1]{grimmett}.

{\bf Claim 3: All large vertices are in the same connected component.} Suppose $v$ is a large edge and let $X_v, R_v$ be the states of $X, R$ at the point that $|R|$ hits $n^{1/2 + \e}\log_\g^3 n$ when the algorithm is run starting at $v$. Then the above shows that $|X_v| \geq c n^{1/2+\e}$ whp for some $c > 0$. Similarly, if $w$ is a large vertex then $|X_w| \geq c n^{1/2 + \e}$. Assign all edges in $X_v \cup X_w$. For every pair $e \in X_v, f \in X_w$, either $e \in E_{f}^\s$ or $f \in E_{e}^\s$, since edges in $X$ are required to be in the edge set $E_n$ of $G_n$. In particular, either half the edges $e\in X_v$ have half of $X_w$ in $E_{e}^\s$, or half the edges $f\in X_w$ have half of $X_v$ in $E_{f}^\s$. In the former case, the probability that no edge $e\in X_v$ chooses any $f\in X_w$ is bounded above by
$$
\left(1 - \frac{\Omega(n^{1/2 + \e})}{n}\right)^{\Omega(n^{1/2 + \e})} = \exp\left\{-\Omega(n^{2\e})\right\}
$$
and in the other case, the same bound holds. So with high probability, any two large edges belong to the same component. In other words, there is a unique large component.

{\bf Claim 4: The large component contains $\OM(n)$ vertices.} The number of vertices in $G_n$ is $p n + O(n^{1/2}\ln n)$ since $\s$ is $\om$-concentrated. By part (i) of Theorem \ref{giantthm}, the number of non-isolated vertices is$(1 - \xi) n + O(n^{1/2}\ln n)$ whp for some $\xi > 0$. By linearity of expectation and Claim 2, the number $S$ of small, non-isolated vertices in $G_n$ satisfies
$$
\E{S} \leq (1 - \xi)\bfrac{13}{14}^{m-1} n + O(n^{1/2}\ln n).
$$
We note that $\E{S} = \OM(n)$: when the algorithm starts with $X = \{x_1,\dots,x_m\}$, the $m$ free edges adjacent to some $v_0 \in V_n$, the probability that $X_1(x_i) = \emptyset$ for $i=1,2,\dots,m$ is bounded away from $0$.

Write $S = \sum_{v\in V} S_v$ where $S_v$ is the indicator variable for $v$ being small. Then $\E{S(S-1)} = \sum_{u\neq v} \E{S_uS_v}$. Fix $u\neq v$. Suppose we run the process starting at $v$ and find that the component is small. In the process of determining that the component is small, we assign some edges $A_v$ and expose some half-edges $R_v$, where $|A_v| = O(\log n)$ and $|R_v| = O(\log^2 n)$. The probability that $u$ is in the component is $O(\log^2 n / n)$. If $u$ is not in the component, the algorithm is run starting at $u$ on the partially generated $\widetilde{\G}\in \mG(A_v, R_v)$. In the statement of the algorithm we assumed that it is run on $\G_0 \in \mG(\emptyset, \emptyset)$, but it can be easily modified  to accommodate for $\widetilde{\G} \in \mG(A_v, R_v)$, and it will follow that $\E{S_u \mid S_v = 1} = \E{S_u}(1 + o(1))$. Hence $\E{S_uS_v} = \E{S_u}\E{S_v}(1 + o(1))$, and Chebyshev's inequality shows that $S = \E{S} + o(n)$. Since $\E{S} = \OM(n)$, this shows that with high probability,
$$
S = \E{S} + o(n) \leq (1 - \xi) \bfrac{13}{14}^{m-1}n + o(n).
$$
The theorem follows.

\subsection{Proof of Lemma \ref{Xlemma}}

We first note that $\Prob{x_1 \in A} = o(1)$, since if $x\in A$ then $(x, 1)\in R$, and $|R| = o(n)$. If $x_1\in E_n\setminus A$ then $D(x_1)\subseteq X_1(x_0)$ so $|X_1(x_0)| \geq m \geq 2$. If $x_1\notin E_n$ we have $|X_1(x_0)| = (m-1)|Y_1(x_0)\cap E_n|$. The lemma will follow from showing that for all $m\geq 2$,
$$
\Prob{|Y_1(x_0)\cap E_n| = 0 \mbox{ and } x_1\notin E_n} \leq 0.255
$$
and for $m = 2$,
$$
\Prob{|Y_1(x_0) \cap E_n| = 1 \mbox{ and } x_1\notin E_n} \leq 0.455.
$$

Throughout this proof we take $a \approx b$ to mean that $a = b + o_n(1)$. Let $(x_1, j_1)$ be the random choice of $x_0$. We first note that if $\t_0 = \log_\g(pmn / x_0) \in [0, 1]$ and $\t_1 = \log_\g(pmn / x_1)$ then for $y\in [0,1]$, 
\begin{equation}\label{x1dist}
\Prob{\t_1 - \t_0 \leq y} \approx \frac{\ln\g}{1-1/\g}\int_0^y \g^{-x}dx.
\end{equation}
Indeed, $(x_1, j_1)$ is a uniformly random member of $\widetilde{\OM}(x_0) = (E_{x_0}^\s \times [2]) \setminus R$, and since $\s$ is $\om$-concentrated we have $E_{x_0}^\s = \{x_0 / \g + O(n^{-1/2}\ln n),\dots, x_0 - i\}$ for some $i\in [m]$. Since $|R| = o(|E_{x_0}^\s|)$ and $\widetilde{\OM}(x_0) \supseteq (E_{x_0}^\s\times [2])\setminus R$, we can view $x_1$ as essentially being a uniform member of $E_{x_0}^\s$. Then $\t_1 - \t_0 = \log_\g(x_0 / x_1)$ is exponentially distributed, truncated to $[0,1]$ as in \eqref{x1dist}. In particular, since $x_1 \notin E_n$ when $\t_1 > 1+\d$ for some $\d=O(n^{-1/2}\ln n)$.
\begin{equation}\label{x1notinEn}
\Prob{x_1\notin E_n} \approx \frac{\ln \g}{1-1/\g}\int_{1-\t_0}^1 \g^{-x}dx = \frac{\g^{\t_0}-1}{\g-1} \leq \t_0.
\end{equation}

{\bf Claim A:} Let $m\geq 2$ and $x_0\in E_n$. Then $\Prob{|Y_1(x_0)\cap E_n| = 0 \mbox{ and } x_1\notin E_n} < 0.255$.

{\bf Proof of claim A:} Let $m\geq 2$ and fix an edge $x_0\in E_n$. Suppose $x_1\notin E_n$. In step 2 of the algorithm we then find a chain of edges $x_1,x_2,\dots, x_K$ for some random $K$. Since $|R| = o(n^{3/4})$ and $|\OM(x_i)| = \OM(n/\om)$ for all $x_i \geq mn/\om$, we have $\Prob{j_i = 1} = 1/2 + o(n^{-1/4})$ for all $i$, and $K$ is approximately geometric with mean $2$. In particular, since $\log_\g(x_i / x_{i-1}) \leq 1 + o(1)$ for all $i$ we have $x_K \geq mn/\om$ with probability $1 - o_n(1)$. Condition on this.

We will consider two subsets of $Y_1(x_0)$. Let $\mathcal{R}(x_0)$ be the edges found when exposing $(x_0, 1)$ and $(x_0, 2)$, and let $\mathcal{L}(x_0)$ be the set of edges in $E_n$ found by exposing $(x_1, 1), (x_2, 1),\dots, (x_{K-1}, 1)$ and $(x_K', 2)$ for all $x_K' \in D(x_K)$. Then 
$$
\Prob{|X_1(x_0) = 0|} = \Prob{|\mathcal{R}(x_0)| = |\mathcal{L}(x_0)| = 0},
$$
and we now argue that $|\mathcal{R}(x_0)|, |\mathcal{L}(x_0)|$ are essentially independent. We find $\mathcal{R}(x_0)$ by exposing $(x_0, 1)$ and $(x_0, 2)$. By Lemma \ref{exposelemma}, the number of edges found is asymptotically geometric, and in particular is $O(\log n)$ whp. Initially $|\OM(e)|$ is of order $n$ for all $e > x_0$, so exposing $O(\log n)$ edges only shrinks $\OM(e)$ to $\widetilde{\OM}(e)$ of size $|\widetilde{\OM}(e)| = |\OM(e)|(1 - o(1))$. When $|\mathcal{L}(x_0)|$ is calculated, starting with $\widetilde{\OM}(e)$ instead of $\OM(e)$ for $e > x_0$ makes an insignificant difference to the result, and we have
$$
\Prob{|\mathcal{R}(x_0)| = j_1 \ \mbox{and} \ |\mathcal{L}(x_0)| = j_2} \approx \Prob{|\mathcal{R}(x_0) = j_1}\Prob{|\mathcal{L}(x_0)| = j_2}.
$$

Let $\t_0 = \log_\g(pmn / x_0)$. Since $\s$ is $\om$-concentrated we have $\t_0\in (-\d, 1+\d)$ for some $\d = O(n^{1/2}\ln n)$, see Lemma \ref{sigmalemma}. Assume for now that $\t_0\in [0,1]$. Let $E(x_0, i)$ denote the set of edges in $E_n$ found by exposing $(x_0, i)$. By Lemma \ref{exposelemma}, $|E(x_0, i)|$ is asymptotically geometrically distributed (nonzero since $x_0 \in E_n$) with rate $e^{-\a\t_0}$ for $i=1,2$ so
\begin{equation}\label{rightprob}
\Prob{|\mathcal{R}(x_0)| = j} \approx \left\{\begin{array}{ll}
 e^{-2\a\t_0}, & j = 0, \\
2e^{-2\a\t_0}(1-e^{-\a\t_0}), & j = 1.
\end{array}\right.
\end{equation}

Now consider the chain $x_0 > x_1 > \dots > x_K$ where $x_{i-1}$ chooses $(x_i, 1)$ for $1 \leq i \leq K-1$ and $x_{K-1}$ chooses $(x_K, 2)$. If $K = 1$ and $x_1\notin E_n$, then $\Prob{|\mathcal{L}(x_0)| = 0} \approx (1-q(\t_1))^m \leq (1-q(\t_1))^2$ by Lemma \ref{exposelemma}, where $\t_1-\t_0$ can be approximated by a truncated exponential as above, so
$$
\Prob{|\mathcal{L}(x_0)| = 0, \ K = 1 \mbox{ and } x_1\notin E_n} \leq \frac12 \frac{\ln\g}{1-1/\g}\int_{1-\t_0}^1 \frac{(1-q(\t_0+x))^2}{\g^x}dx.
$$
In Claim C we show that for all $\a > 1/2$ and $\t_0\in [0,1]$,
$$
\frac12\frac{\ln\g}{1-1/\g}\int_{1-\t_0}^1 \frac{(1-q(\t_0 + x))^2}{\g^x}dx \leq \frac{\t_0}{2e - e^{1/2}}.
$$

If $K > 1$, then $\mathcal{L}(x_0) = \emptyset$ only if $E(x_1, j_1) = E(x_2, j_2) = \emptyset$. If $\t_i = \log_\g(pmn / x_i)$ denotes the age of $x_i$ then the probability of $E(x_i, j_i)$ being empty is $1-q(\t_i) \leq 1-q(\t_0 + i)$ for $i = 1,2$. Here we used the fact that $q(\t)$ is decreasing, see Lemma \ref{constlem} (\ref{decreasing}). Since $\Prob{K\geq 2} = 1/2$, we have by \eqref{x1notinEn},
$$
\Prob{|\mathcal{L}(x_0)| = 0, \ K \geq 2 \mbox{ and } x_1\notin E_n} \leq \frac{\t_0}{2} (1-q(\t_0 + 1))(1 - q(\t_0 + 2)).
$$
The function $q(\t)$ is defined in Section \ref{confun}, and we have
$$
(1-q(\t_0 + 1))(1-q(\t_0 + 2)) = \frac{1}{e^\a - \a\t_0} \frac{e^\a-\a\t_0}{e^{2\a} - (\t_0 + 1)\a e^\a + \frac12 \a^2\t_0^2}.
$$
We show in Claim C that this is at most $1 / (e - e^{1/2} + 1/8)$. So
\begin{align*}
& \Prob{|Y_1(x_0) \cap E_n| = 0 \mbox{ and } x_1\notin E_n} \\
\leq & e^{-2\a\t_0}\left(\frac12\frac{\ln\g}{1-1/\g}\int_{1-\t_0}^1 \frac{(1-q(\t_0 + x))^2}{\g^x} dx + \frac12 (1-q(\t_0+1))(1-q(\t_0+2))\right) \\
\leq & \t_0 e^{-2\a\t_0} \left(\frac{1}{2e - e^{1/2}} + \frac{1}{2(e - e^{1/2} + \frac18)}\right). 
\end{align*}
Let $L_0$ denote the expression in brackets, and note that $L_0 < 0.69$. We have $\t_0 e^{-2\a\t_0} \leq e^{-1}$ for $\a > 1/2$ and $\t_0 \in [0, 1]$, so
$$
\Prob{|Y_1(x_0)\cap E_n| = 0 \mbox{ and } x_1\notin E_n} < e^{-1}\cdot 0.69 < 0.255.
$$
{\bf End of proof of claim A.}

{\bf Claim B:} Let $m = 2$ and $x_0 \in E$. Then $\Prob{|Y_1(x_0) \cap E_n| = 1 \mbox{ and } x_1\notin E_n} < 0.455$.

{\bf Proof of claim B:} We note that while $\mathcal{L}(x_0)$ and $\mathcal{R}(x_0)$ do not necessarily partition $Y_1(x_0)\cap E_n$, it is the case that
\begin{align*}
\Prob{|Y_1(x_0)\cap E_n| = 1} & \leq \Prob{|\mathcal{L}(x_0)| = 1, |\mathcal{R}(x_0)| = 0} + \Prob{|\mathcal{L}(x_0)| = 0, |\mathcal{R}(x_0)| = 1} \\
& \approx \Prob{|\mathcal{L}(x_0)| = 1}\Prob{|\mathcal{R}(x_0)| = 0} + \Prob{|\mathcal{L}(x_0)| = 0}\Prob{|\mathcal{R}(x_0)| = 1}.
\end{align*}
We calculated the probability that $|\mathcal{R}(x_0)| = 0, 1$ in \eqref{rightprob}. For the probability that $|\mathcal{L}(x_0)| = 1$, let $x_{K + 1}$ denote the edge added along with $x_K$ (so $|x_{K+1} - x_K| = 1$). Let $\t_i = \log_\g (pmn / x_i)$ for $i = 0,1,\dots, K +1$. Then
\begin{align*}
\Prob{|\mathcal{L}(x_0)| = 1 \mid x_1\notin E_n}
\leq & \sum_{k\geq 1}\Prob{K = k}\sum_{i = 1}^{k + 1} q(\t_i)p(\t_i)\prod_{\substack{1\leq j \leq k+1 \\ j\neq i}} (1-q(\t_j))
\end{align*}
where $i$ denotes the edge whose exposure contributes to $\mathcal{L}(x_0)$. We use the bound $1-q(\t_1) \leq 1-q(\t_0 + 1)$ whenever $1-q(\t_1)$ is involved in the product (i.e. when $i > 1$), and bound $p(\t_i)q(\t_i) \leq p(1)q(1)$ for all $i\geq 1$ (which follows from $p(\t), q(\t)$ being decreasing, see Lemma \ref{constlem} (\ref{decreasing})) to get
\begin{align*}
\Prob{|\mathcal{L}(x_0)| = 1\mid x_1\notin E_n}
& \leq  \sum_{k \geq 1} \frac{1}{2^k} p(1)q(1)\left(1 + k(1 - q(\t_0+1))\right) \\
& = e^{-\a}(1-e^{-\a})\left(1 + \frac{2}{e^\a-\a\t_0}\right) \\
& \leq \frac14\left(1 + \frac{2}{e^{1/2}-1/2}\right).
\end{align*}
This bound holds for all $\a > 1/2, \t_0\in[0,1]$. Let $L_1 = 1/4 + 1/(2e^{1/2}-1)$.

We now bound
\begin{align*}
\Prob{|Y_1(x_0) \cap E_n| = 1 \mbox{ and } x_1\notin E_n} \leq & \Prob{|\mathcal{R}(x_0)| = 0}\Prob{|\mathcal{L}(x_0)| = 1\mid x_1\notin E_n}\Prob{x_1\notin E_n} \\
& + \Prob{|\mathcal{R}(x_0)| = 1} \Prob{|\mathcal{L}(x_0)| = 0 \mbox{ and } x_1\notin E_n} \\
\leq & \t_0 e^{-2\a\t_0}L_1 + 2\t_0 e^{-2\a\t_0}(1-e^{-\a\t_0})L_0 \\
\leq & \frac1e L_1 + \frac{1}{\a e}(1-e^{-\a})L_0,
\end{align*}
where we used the fact that $\t_0 e^{-2\a\t_0}$ viewed as a function of $\t_0$ has a global maximum at $\t_0 = 1/2\a$, so $\t_0e^{-2\a\t_0}\leq 1/(2\a e) \leq 1/e$, and we also used $1 - e^{-\a\t_0} \leq 1-e^{-\a}$. Finally, $(1-e^{-\a})/(\a e)$ is decreasing in $\a$, so
$$
\Prob{|Y_1(x_0)\cap E_n| = 1 \mbox{ and } x_1\notin E_n} \leq \frac{L_1}{e} + \frac{2}{e}(1-e^{-1/2})L_0  < 0.455
$$
{\bf End of proof of claim B.}

{\bf Claim C:} The following two inequalities hold for all $\a > 1/2$ and $\t_0\in [0,1]$:
\begin{equation}\label{C1}
(1-q(\t_0 + 1))(1-q(\t_0 + 2)) \leq \frac{1}{e - e^{1/2} + 1 / 8},
\end{equation}
and
\begin{equation}\label{C2}
\frac{\ln\g}{2-2/\g}\int_{1-\t_0}^1 \frac{(1-q(\t_0 + x))^2}{\g^x}dx \leq \frac{\t_0}{2e - e^{1/2}}.
\end{equation}
{\bf Proof of claim C:} To emphasize the dependence on $\a$ we briefly write $q(\a, \t) = q(\t)$. For $\t_0 \in [0, 1]$ we have
$$
q(\a, \t_0 + 1) = \frac{1}{e^\a - \a\t_0},\quad q(\a, \t_0 + 2) = \frac{e^\a - \a\t_0}{e^{2\a} - (\t_0+1)\a e^\a + \frac12 \a^2 \t_0^2}.
$$
Suppose $\a_1 > \a_2$ and let $\mC_1$ be a CMJ process with rate $\a_1$. Mark any arrival red with probability $\a_2 / \a_1$, and consider the CMJ process $\mC_r$ on the red arrivals. This will have rate $\a_2$, and if $\mC_r$ is active at time $\t_0$ then so is $\mC$. This implies $q(\a_1, \t) \geq q(\a_2, \t)$ for all $\t$, since $q(\a, \t)$ is the probability that a CMJ process of rate $\a$ is active at time $\t$. So for any $\a > 1/2, \t_0 \in [0, 1]$, 
\begin{equation}\label{C11}
1 - q(\a, \t_0 + 1) \leq 1 - q\left(\frac12, \t_0 + 1\right) = \frac{1}{e^{1/2} - \t_0 / 2}
\end{equation}
and
\begin{equation}\label{C12}
1-q(\a, \t_0 + 2) \leq 1-q\left(\frac12, \t_0 + 2\right) = \frac{e^{1/2} - \t_0/2}{e - \frac{\t_0+1}{2}e^{1/2} + \t_0^2 / 8}.
\end{equation}
Consider multiplying \eqref{C11} and \eqref{C12}. It is easy to confirm that $e - \frac{\t_0+1}{2}e^{1/2} + \t_0^2 / 8$ is decreasing for $\t_0\in [0,1]$, and \eqref{C1} follows.

Now consider \eqref{C2}. First note that $\a = \frac{p}{4p-2}\ln \g = \frac{1}{2-2/\g}\ln \g$. We have
$$
\frac{\ln\g}{2-2/\g}\int_{1-\t_0}^1 \frac{(1-q(\t_0+x))^2}{\g^x} dx = \int_{1-\t_0}^1 \frac{\a}{\g^x (e^\a-\a (x + \t_0 - 1))^2}dx = \int_0^{\t_0} \frac{\a}{\g^{x+1 - \t_0}(e^\a-\a x)^2} dx.
$$
Fix $\t_0$ and let $f(\a, x) = \a/(\g^{x+1-\t_0}(e^\a - \a x)^2)$ for $0 < x < \t_0$. We will show that $f(\a, x) \leq \lim_{\a\to 1/2} f(\a, x)$ for $\a > 1/2$ by showing that $f(\a, x)$ is decreasing in $\a$. To calculate the derivative of $\g^{-(x+1-\t_0)}$ with respect to $\a$, we note that since $\a = \frac{1}{2-2/\g}\ln \g$,
$$
\frac{d\g}{d\a} = \frac{(2\g-2)^2}{2\g-2 - 2\ln \g} = \frac{2\g-2}{1 - \frac{1}{\g-1}\ln \g} = \frac{2\g-2}{1 - 2\a/\g}.
$$
Since $\ln \g < \g - 1$ we have $1 < 2\a = \ln\g/(1-1/\g) < \g$, so
$$
\frac{d\g}{d\a} = 2\g \frac{\g-1}{\g-2\a} > 2\g.
$$
In particular,
$$
\frac{d}{d\a} \g^{-(x + 1 - \t_0)} = -(x + 1 - \t_0) \g^{-(x + 1 - \t_0)}\frac1\g \frac{d\g}{d\a} < -2(x+1-\t_0)\g^{-(x+1-\t_0)}.
$$

Now for $0\leq x\leq \t_0\leq 1$ and $\a > 1/2$, since $e^\a > 1/2 + \a x$ we have
\begin{align*}
\frac{\partial f}{\partial\a} & = \frac{1}{\g^{x+1-\t_0}(e^\a-\a x)^2} + \frac{\a}{(e^\a - \a x)^2} \left(\frac{d}{d\a} \g^{-(x-\t_0+1)}\right) - \frac{2\a(e^\a - x)}{\g^{x-\t_0 + 1}(e^\a - \a x)^3} \\
&< \frac{1}{\g^{x+1-\t_0}(e^\a - \a x)^2} - \frac{2(x+1-\t_0) \a}{\g^{x+1-\t_0}(e^\a - \a x)^2} - \frac{2\a (e^\a - x)}{\g^{x-\t_0+1}(e^\a - \a x)^3} \\
& = \frac{1}{\g^{x + 1-\t_0}(e^\a - \a x)^3}\left(e^\a - \a x - 2(x + 1 - \t_0)\a (e^\a - \a x) - 2\a(e^\a - x)\right) \\
& < \frac{1}{\g^{x + 1 - \t_0}(e^\a - \a x)^3}\left(e^\a - \a x - 2x\a(e^\a - \a x) - 2\a(e^\a - x)\right) \\
& = \frac{1}{\g^{x+1-\t_0}(e^\a - \a x)^3}\left(e^\a(1-2\a) -2\a x(e^\a - \a x - 1/2)\right) \\
& < 0.
\end{align*}
Noting that $\g \to 1$ as $\a\to 1/2$, this implies
$$
\int_0^{\t_0} \frac{\a}{\g^{x+1-\t_0}(e^\a - \a x)^2}dx < \int_0^{\t_0} \frac{1/2}{(e^{1/2} - x/2)^2} = \frac{1}{e^{1/2} - \t_0/2} - \frac{1}{e^{1/2}} = \frac{\t_0}{2e^{1/2}(e^{1/2} - \t_0/2)}.
$$
Then \eqref{C2} follows from $e^{1/2} - \t_0 / 2 \geq e^{1/2} - 1/2$.
\\{\bf End of proof of claim C.}

\subsection{Proof of Lemma \ref{BClemma}}

Recall Lemma \ref{BClemma}:

\BClemma*

{\bf Proof of (i).} The key observation is that by Lemma \ref{CMJbirthlem} (iii) and Lemma \ref{exposelemma}, if $e > mn/\om$ and we expose $(e, j)$ then there exists a $\l > 0$ such that $|E(e, j) \cap E_n| \geq \lfloor |E(e, j)| / (\l \log_\g^2 n)\rfloor$ with probability $1 - o(n^{-1})$ . Here $E(e, j)$ denotes the set of edges found when exposing $(e, j)$. Condition on this being the case for all $O(n)$ half-edges exposed over the course of the algorithm. To avoid rounding, we note that if $|E(e, j) \cap E_n| = 0$ then $|E(e, j)| \leq \l\log_\g^2 n$ and if $|E(e, j)\cap E_n| > 0$ then $|E(e, j)| \leq 2\l |E(e,j)\cap E_n| \log_\g^2 n$.

The above holds if $e > mn/\om$. If $e \leq mn/\om$ and $(e, j)\in Q(x)$, Lemma \ref{exposelemma} does not apply to exposing $(e, j)$. In this case, reveal $(e, j)$, i.e. find all $f$ such that $\f(f) = (e, j)$. Note that
$$
E(e, j) = \{(e, j)\} \cup \bigcup_{(f, 1) : f\in \f^{-1}(e, j)} E(f, 1).
$$
Remove $(e, j)$ from $Q(x)$ and replace it by $(f, 1)$ for all $f\in \f^{-1}(e, j)$. Repeat this until all $(e, j) \in Q(x)$ have $e > mn/\om$. Let $Q'(x)$ be the end result of this process.

Recall that $E_c$ is the set of edges $e$ with $e > mn/\om$. We have
$$
|R_t\cap E_c| \leq \sum_{i = 1}^t 2|Y_1(x_i)\cap E_c|, \quad |C_t| \geq \sum_{i = 1}^t |Y_1(x_i)\cap E_n|
$$
and in round $i$,
$$
|Y_1(x_i)\cap E_c| = \sum_{(e, j) \in Q'(x_i)} |E(e, j)|, \quad |Y_1(x_i)\cap E_n| = \sum_{(e, j)\in Q'(x_i)} |E(e, j)\cap E_n|.
$$
Letting $(e_1, j_1),\dots,(e_s, j_s) \in \cup_i Q'(x_i)$ be the half-edges exposed in the first $t$ rounds of the algorithm, we then have
$$
|R_t\cap E_c| \leq 2\sum_{i=1}^s |E(e_i, j_i)|, \quad |C_t| \geq \sum_{i=1}^s |E(e_i, j_i)\cap E_n|.
$$
Let $I_i$ be the indicator variable for $|E(e_i, j_i) \cap E_n| > 0$, and let $I = I_1 + \dots + I_s$. Then by the above,
$$
|R_t\cap E_c| \leq 2(s-I)\l \log_\g^2 n + 2\l\log_\g^2 n\sum_{i : I_i = 1} |E(e_i, j_i)\cap E_n| = 2(s-I)\l \log_\g^2 n + 2\l|C_t|\log_\g^2n,
$$
and we will show that $s \leq I\log n \leq |C_t|\log n$.

Every edge exposed in the process is in $E_c$, so the probability that $I_i = 1$ is, by Lemma \ref{exposelemma}, $q(\t_i)$ where $\t_i = \log_\g(pmn / e_i)$. For all $i$, $\t_i \leq \log_\g \om$, and $q(\t)$ is decreasing by Lemma \ref{constlem} (\ref{decreasing}), so $I_i = 1$ with probability at least $q(\log_\g \om) \geq \l_2 \zeta^{-\log_\g \om}$ where $\l_2 > 0$, see Lemma \ref{constlem}. Let $c > 0$ be such that $q(\t_i) \geq \om^{-c}$ for all $i$. Then $I$ can be bounded below by a binomial random variable $J \sim \mbox{Bin}(s, \om^{-c})$. Suppose $s > 4\om^{2c}\log n$. Then Hoeffding's inequality \cite{hoef} implies
$$
\Prob{I < s\om^{-c}} \leq \Prob{J < s\om^{-c}} \leq\exp\left\{-2\bfrac{\om^{-c}}{2}^2 s\right\} \leq n^{-2}.
$$
Since $|C_t| \geq I$, This shows that with high probability, if $s > 4\om^{2c}\log n$ then $s\leq I\om^c \leq |C_t|\om^c$ and
$$
|R_t\cap E_c| \leq 2(s-I)\l\log_\g^2 n + 2\l|C_t|\log_\g^2 n \leq 3\l |C_t|\om^c \log_\g^2 n.
$$
If $s \leq 4\om^{2c}\log n$ then $|C_t| \geq 0$ implies
$$
|R_t\cap E_c| \leq 4\l\om^{2c}\log_\g^3 n \leq 4\l\om^{2c}(|C_t| + 1)\log_\g^3 n,
$$
and since $\om^{2c} =(\log\log n)^{2c}\leq \log_\g n$ for $n$ large enough, this finishes the proof of (i).

{\bf Proof of (ii).} In each round we have $|X_1(x)| = m + (m-1)|Y_1(x)\cap E_n|$ if $x_1\in E_n$ and $|X_1(x)| = (m-1)|Y_1(x)\cap E_n|$ if $x_1\notin E_n$. In particular, $|Y_1(x)\cap E_n| \leq |X_1(x)| / (m-1) \leq |X_1(x)|$. If $x_i$ denotes the starting edge of round $i$ then
$$
|C_t| = m + \sum_{i = 1}^t |X_1(x_i)| + |Y_1(x_i)\cap E_n|, \quad |X_t| = m + \sum_{i=1}^t |X_1(x_i)| - 1,
$$
so
$$
|C_t| - |X_t| - t = \sum_{i=1}^t |Y_1(x_i)\cap E_n| \leq \sum_{i=1}^t |X_1(x_i)| = |X_t| + t.
$$
It follows immediately that $\frac12|C_t| \leq |X_t| + t \leq |C_t|$.

\section{Concluding remarks}

The main computational task in improving the results of this paper is in estimating integral involving $p(\t), q(\t)$ and $\g^{-\t}$. To find the exact number of vertices of degree $k$ for $k = O(1)$, one needs to calculate integrals involving terms of the form $\g^{-\t}q(\t)p(\t)(1-p(\t))^{k-1}$, and this is difficult to do in any generality. Integrals involving $p(\t), q(\t)$ and $\g^{-\t}$ also appear when looking for small components, which prevented us from finding the exact size of the giant component.

\bibliographystyle{plain}
\bibliography{biblio}

\begin{thebibliography}{10}

\bibitem{ah}
L.A. Adamic and B.A. Huberman.
\newblock Power-law distribution of the {W}orld {W}ide {W}eb.
\newblock {\em Science}, 287:2115, 2000.

\bibitem{ba}
R.~Albert and A-L. Barab\'asi.
\newblock Emergence of scaling in random networks.
\newblock {\em Science}, 286:509--512, 1999.

\bibitem{alonspencer}
N.~Alon and J.~Spencer.
\newblock {\em The probabilistic Method}.
\newblock John Wiley and Sons Inc., New Jersey, third edition, 2008.

\bibitem{betal}
A.L. Barab\'asi, H.~Jeong, Z.~N\'eda, E.~Ravasz, A.~Schubert, and T.~Vicsek.
\newblock Evolution of the social network of scientific collaborations.
\newblock {\em Physica A}, 311:590--614, 2002.

\bibitem{br}
B.~Bollob\'as and O.~Riordan.
\newblock Robustness and vulnerability of scale-free random graphs.
\newblock {\em Internet Math.}, 1(1):1--35, 2003.

\bibitem{brst}
B.~Bollob\'as, O.~Riordan, J.~Spence, and G.~Tusn\'ady.
\newblock The degree sequence of a scale-free random graph process.
\newblock {\em Random structures and algorithms}, 18:279--290, 2001.

\bibitem{cl}
F.~Chung and L.~Lu.
\newblock Coupling online and offline analyses for random power law graphs.
\newblock {\em Internet Math.}, 1(4):409--461, 2004.

\bibitem{cfv}
C.~Cooper, A.M. Frieze, and J.~Vera.
\newblock Random deletion in a scale free random graph process.
\newblock {\em Internet math.}, 1(4):463--483, 2003.

\bibitem{cm1}
K.S. Crump and C.J. Mode.
\newblock A general age-dependent branching process. {I}.
\newblock {\em Journal of mathematical analysis and applications}, 24:494--508,
  1968.

\bibitem{cm2}
K.S. Crump and C.J. Mode.
\newblock A general age-dependent branching process. {II}.
\newblock {\em Journal of mathematical analysis and applications}, 25:8--17,
  1969.

\bibitem{fff}
M.~Faloutsos, P.~Faloutsos, and C.~Faloutsos.
\newblock On power-law relationships of the {I}nternet topology.
\newblock In {\em ACM SIGCOMM, Boston}, 1999.

\bibitem{ffv1}
A.D. Flaxman, A.M. Frieze, and J.~Vera.
\newblock A geometric preferential attachment model of networks.
\newblock {\em Internet math.}, 3(2):187--205, 2006.

\bibitem{ffv3}
A.D. Flaxman, A.M. Frieze, and J.~Vera.
\newblock Adversarial deletion in a scale free random graph process.
\newblock {\em Combinatorics, Probability and Computing}, 16:261--270, 2007.

\bibitem{ffv2}
A.D. Flaxman, A.M. Frieze, and J.~Vera.
\newblock A geometric preferential attachment model of networks {II}.
\newblock {\em Internet math.}, 4(1):87--112, 2007.

\bibitem{grimmett}
G.~Grimmett and D.~Stirzaker.
\newblock {\em Probability and random processes}.
\newblock Oxford University Press, Oxford, third edition, 2001.

\bibitem{hoef}
W.~Hoeffding.
\newblock Probability inequalities for sums of bounded random variables.
\newblock {\em Journal of the American Statistical Association}, 58:13--30,
  1963.

\bibitem{jlr}
S.~Janson, T.~{\L}uczak, and A.~Ruci\'nski.
\newblock {\em Random Graphs}.
\newblock John Wiley and Sons Inc., New York, 2000.

\bibitem{kp}
B.J. Kim and S.M. Park.
\newblock Distribution of {K}orean family names.
\newblock {\em Physica A}, 347:683--694, 2005.

\bibitem{mlnm}
S.~Miyazima, Y.~Lee, T.~Nagamine, and H.~Miyajima.
\newblock Power-law distribution of family names in {J}apanese societies.
\newblock {\em Physica A}, 278:282--288, 2000.

\bibitem{msgl}
S.A. Myers, A.~Sharma, P.~Gupta, and J.~Lin.
\newblock Information network or social network? the structure of the {T}witter
  follow graph.
\newblock In {\em WWW}, pages 493--498, 2014.

\bibitem{ss}
M.~Shaked and J.G. Shanthikumar.
\newblock {\em Stochastic Orders}.
\newblock Springer, New York, 2007.

\bibitem{ukbm}
J.~Ugander, B.~Karrer, L.~Backstrom, and C.~Marlow.
\newblock The anatomy of the {F}acebook social graph.
\newblock arXiv:1111.4503.

\end{thebibliography}

\appendix

\section{Proof of Lemma \ref{constlem}}\label{constlemproofsec}

In this section we prove Lemma \ref{constlem}, in which we collect useful properties of the central constants and functions defined in Section \ref{confun}. We will restate the definitions here to make the appendix self-contained. Firstly, the integer $m\geq 1$ and the real number $1/2 < p < 1$ are the parameters for the graph process, and we define
$$
\m = m(2p - 1), \quad \g = \frac{p}{1 - p}, \quad \a = \frac{pm}{2\m}\ln \g = \frac{p}{4p-2}\ln\g.
$$
We let $p_0 \approx 0.83113$ be the unique $p$ for which $\a = 1$, and when $\a \neq 1$ we define $\zeta$ as the unique solution in $\mathbb{R}\setminus\{1\}$ to
\begin{equation}\label{zetadef}
\zeta e^{\a(1-\zeta)} = 1.
\end{equation}
If $\a > 1$ define $\eta = -\ln\g / \ln\zeta$. If $\a < 1$ then $\eta$ is undefined.

We define a sequence $a_k$ by $a_0 = 1$ and
\begin{equation}\label{akdef}
a_k = \left(-\frac{e^\a}{\a}\right) \sum_{j=0}^{k-1} \frac{a_j}{(k-j-1)!}, \ k \geq 1.
\end{equation}
For $k \geq 0$ define functions $Q_k : [k, k + 1) \to [0, 1]$ by
$$
Q_k(\t) = \sum_{j=0}^k \frac{a_j}{(k-j)!} (\t-k)^{k-j},
$$
and for $\t \geq 0$ we let $Q(\t) = Q_{\lfloor \t\rfloor}(\t)$. We note that $Q(\t)$ is discontinuous at integer points $k$ with
\begin{equation}\label{jumpsize}
Q(k) = a_k \quad \mbox{and} \quad Q(k^-) = -\a e^{-\a} a_k
\end{equation}
where $Q(k^-)$ denotes the limit of $Q(\t)$ as $\t\to k$ from below. Define
$$
q(\t) = 1, 0\leq \t < 1, \ q(\t) = 1 + \frac{Q(\t - 1)}{\a Q(\t)}, \ \t \geq 1.
$$
Finally, define
$$
p(\t) = \exp\left\{-\a\int_0^\t q(x)dx\right\}.
$$
For $\t < 0$ we define $Q(\t) = q(\t) = p(\t) = 0$.

\constlem*

\begin{proof}
{\bf Proof of (\ref{zetalemma}).} Let $\a \neq 1$. The function $x \mapsto x e^{\a(1-x)}$ is strictly increasing for $x < \a^{-1}$ and strictly decreasing for $x >\a^{-1}$, and its global maximum at $x = \a^{-1}$ is $\a^{-1} e^{\a-1} > 1$. The two solutions $x_1, x_2$ of $x e^{\a(1-x)} = 1$ must satisfy $x_1 < \a^{-1} < x_2$, and $\zeta < \a^{-1}$ for $\a > 1$ follows from the fact that $\zeta$ is the solution which is not $1$. When $\a < 1$, it is straightforward to plug in $x = 1 - \a^{-1} + \a^{-2}$ and confirm that $xe^{\a(1-x)} > 1$, which shows that $\zeta > 1 - \a^{-1} + \a^{-2} > \a^{-1}$.

{\bf Proof of (\ref{etalemma}).} Let $\a > 1$, so $p > p_0 \approx 0.83$. To see that $\eta > 2$, we first note that the definition of $\a$ gives$\ln \g = \a(4 - 2/p)$ and the definition of $\zeta$ gives $\ln \zeta = -\a(1-\zeta)$, so
$$
\eta = -\frac{\ln\g}{\ln\zeta} = \frac{4-\frac2p}{1-\zeta} > 1
$$
since $4-\frac2p > 1$ for $p > p_0\approx 0.83$ and $1-\zeta < 1-\a^{-1} < 1$ by (\ref{zetalemma}). Now, $(4-2/p)/(1-\zeta) > 2$ is equivalent to $\zeta + 1 - 1/p > 0$, and $\eta > 1$ and $\g > 1$ implies
$$
\zeta + 1 - \frac1p = \g^{\ln \zeta / \ln \g} - \frac{1-p}{p} = \g^{-1/\eta} - \g^{-1} > 0.
$$

{\bf Proof of (\ref{decreasing}).} Lemma \ref{CMJlemma} shows that $q(\t) = \Prob{X > \t}$ for a random variable $X$, namely $X = \min\{x > 0 : d(x) = 0\}$ in the notation of Lemma \ref{CMJlemma}, and (\ref{decreasing}) follows immediately.

{\bf Proof of (\ref{Qeat}).} Suppose $k\geq 1$ is an integer such that $k < \t < k + 1$. Then (\ref{Qeat}) follows from the fact that
$$
Q'(\t) = \frac{d}{d\t} \sum_{j = 0}^k \frac{a_j}{(k-j)!} (\t-k)^{k-j} = \sum_{j=0}^{k - 1} \frac{a_j}{(k-j-1)!} (\t - k)^{k-j-1} = Q(\t-1).
$$
The case $\t < 1$ follows from the fact that $Q(x) = 0$ for all $x < 0$.

{\bf Proof of (\ref{smallalpha}), (\ref{largealpha}).} We now need to look closer at the sequence $\{a_k\}$. Let $A(z)$ denote its generating function. From \eqref{akdef} we have
\begin{align*}
A(z) = 1 + \sum_{k\geq 0} z^k \left(-\frac{e^\a}{\a}\right)\sum_{j=0}^{k-1} \frac{a_j}{(k-j-1)!} & = 1  - \frac{e^\a}{\a}\sum_{j=0}^\infty a_jz^{j+1} \sum_{k=j+1}^\infty \frac{z^{k-j-1}}{(k-j-1)!}\\
& = 1-\frac{e^\a}{\a} z e^z A(z)
\end{align*}
so $A(z) = 1 / (1 + \a^{-1} z e^{\a+z})$. The sequence $b_k = (-\a)^k a_k$ then has generating function
$$
B(z) = A(-\a z) = \frac{1}{1 - z e^{\a(1-z)}}.
$$
This has simple poles at $z = 1$ and $z = \zeta$, with residues $1/(\a - 1)$ and $\zeta / (\a\zeta - 1)$ respectively. Then
$$
\overline{B}(z) = \frac{1}{1-ze^{\a(1-z)}} - \frac{1}{(1-\a)(1-z)} - \frac{\zeta}{(1 - \a\zeta)(\zeta-z)}
$$
is analytic. Writing $\b = (1 - \a\zeta)^{-1}$, the power series representation of $\overline{B}(z)$ is $b_k - 1/(1-\a) - \b/\zeta^k$. Since $\overline{B}(z)$ is analytic, Cauchy's integral formula shows that for any $\e > 0$,
$$
b_k = \frac{1}{1 - \a} + \frac{\b}{\zeta^k} + O_k(\e^k).
$$
In the remainder of the proof, fix $0 < \e < \zeta^{-1}$.

Using (\ref{Qeat}) and \eqref{jumpsize} we have, for any integer $k\geq 0$,
$$
p(k) = \exp\left\{-\a\int_0^k q(x)dx\right\} = \prod_{j = 1}^k \frac{Q(j-1)e^{\a (j - 1)}}{Q(j^-) e^{\a j}} = \frac{1}{(-\a)^k a_k} = \frac{1}{b_k}.
$$
and $q(k) = 1+Q(k - 1) / (\a Q(k)) = 1 - b_{k-1} / b_k$. If $\a < 1$ then $\zeta > 1$ so for integers $k$,
$$
q(k)\zeta^k = \zeta^k\left(1-\frac{b_{k-1}}{b_k}\right) = \frac{\zeta^k b_k - \zeta^k b_{k-1}}{b_k} = \frac{\b - \zeta\b + O(\zeta^{-k})}{\frac{1}{1-\a} + O(\zeta^{-k})} = \b(1-\a)(1-\zeta) + O(\zeta^{-k})
$$
and we set $\l_2 = \b(1-\a)(1-\zeta) = (1-\a)(\zeta-1)/(\a\zeta-1)$. Recall that $p(k) = 1/b_k$. By Taylor's theorem we have $1 / (a + bx) = a^{-1} - ba^{-2}x + O(x^2)$ for any constants $a,b\neq 0$, so with $x = \zeta^{-k}$
$$
(p(k) - (1-\a))\zeta^k = \left(\frac{1}{\frac{1}{1-\a} + \frac{\b}{\zeta^k}+O(\e^k)} - (1-\a)\right)\zeta^k = -\b(1-\a)^2 + O(\zeta^{-k})
$$
and we set $\l_1 = -\b(1-\a)^2 = (1-\a)^2 / (\a\zeta-1)$. Here $\zeta > 1-\a^{-1}+\a^{-2}$ (from (\ref{zetalemma})) implies $\l_1 < \a$.

Suppose $\a > 1$. Then $0 < \zeta < \a^{-1}$ and
$$
\frac{q(k) - (1 - \zeta)}{\zeta^k} = \frac{\zeta b_k - b_{k-1}}{\zeta^k b_k} = \frac{\frac{\zeta - 1}{1 - \a} + O(\e^{k-1})}{\b + O(\zeta^k)} = \frac{(1-\zeta)(1-\a\zeta)}{\a-1} + O(\zeta^k)
$$
and we set $\l_4 = (1-\zeta)(1-\a\zeta)/(\a-1)$. From the definition \eqref{zetadef} of $\zeta$ we have
$$
\frac{p(\t)}{\zeta^\t} = \exp\left\{-\a\int_0^\t q(x) dx - \t\ln \zeta\right\} = \exp\left\{-\a\int_0^\t (q(x) - (1-\zeta)) dx\right\}
$$
and since $q(\t)$ decreases toward $1 - \zeta$ at an exponential rate, the integral converges as $\t\to \infty$ and $p(\t)\zeta^{-\t}$ is decreasing. Again considering integer values $k$, we have
$$
\frac{p(k)}{\zeta^k} = \frac{1}{b_k\zeta^k} = \frac{1}{\b + O(\zeta^k)} = 1-\a\zeta + O(\zeta^k)
$$
and we set $\l_3 = 1-\a\zeta$.

The above shows the asymptotic behaviour of $p(\t), q(\t)$ for integer values of $\t$. Since both functions are monotone, the same asymptotics apply to non-integer values of $\t$. From (\ref{zetalemma}) it follows that $\l_1, \l_2, \l_3, \l_4$ all are positive.
\end{proof}

\section{Proof of Lemmas \ref{CMJlemma}, \ref{CMJbirthlem}}\label{CMJproofsec}

Recall Lemma \ref{CMJlemma}.

\CMJlemma*

\begin{proof}[Proof of Lemma \ref{CMJlemma}]
The process is a Crump-Mode-Jagers process, a class of processes which were studied in general in companion papers \cite{cm1}, \cite{cm2}. Define
$$
F(s, \t) = \sum_{k\geq 0} \Prob{d(\t) = k} s^k.
$$
In \cite{cm2} it is shown that the probability generating function satisfies
\begin{align}
F(s, \t) & = s \exp\left\{\a \int_0^\t (F(s, u) - 1) \ du \right\}, 0 \leq \t < 1 \label{cheq1} \\
F(s, \t) & = \exp\left\{\a\int_{\t-1}^\t (F(s, u) - 1) \ du\right\}, \t > 1. \label{cheq2}
\end{align}
We will show that $F(s,\t) = \widetilde{F}(s, \t)$ where
$$
\widetilde{F}(s, \t) = 1 - q(\t) + \frac{p(\t)q(\t)s}{1 - s(1 - p(\t))} = 1 + \frac{q(\t)(s - 1)}{1 - s(1 - p(\t))}
$$
with $p(\t), q(\t)$ defined in Section \ref{confun}. This is the probability generating function of $G(p(\t), q(\t))$.

Firstly, when $0 \leq \t < 1$ we plug $q(\t) = 1$, $p(\t) = e^{-\a\t}$ into \eqref{cheq1}, and via the integral substitution $w = e^{\a u}$,
\begin{align*}
s\exp\left\{\a \int_0^\t \left(1 + \frac{s-1}{1 - s(1 - e^{-\a u})} - 1\right) \ du\right\}
& = s\exp\left\{\a\int_0^\t \frac{(s - 1)e^{\a u}}{s + (1-s)e^{\a u}} du\right\} \\
& = s\exp\left\{\int_1^{e^{\a\t}} \frac{s-1}{s + w(1-s)} dw\right\} \\
& = s\exp\left\{- \ln(s - (s-1)e^{\a \t})\right\} \\
& = \frac{s e^{-\a \t}}{1 - s(1 - e^{-\a \t})}
\end{align*}
confirming that $\widetilde{F}(s,\t)$ satisfies \eqref{cheq1}.

For $\t > 1$ we have
$$
\exp\left\{\a\int_{\t-1}^\t (\widetilde{F}(s, u) - 1)\ du\right\} = \exp\left\{\a\int_{\t-1}^\t \frac{q(u)(s-1)}{1-s+sp(u)}\right\} du
$$
and since $p(u) = \exp\left\{-\a\int_0^u q(x)dx\right\}$, the substitution $v(u) = \ln p(u)$ with $dv/du = -\a q(u)$ yields
\begin{align*}
\a\int_{\t-1}^\t \frac{q(u)(s-1)}{1-s+sp(u)}du  & = \int_{v(\t-1)}^{v(\t)} \frac{1-s}{1-s + se^v}dv \\
& = \int_{v(\t-1)}^{v(\t)} \left(1 - \frac{s e^v}{1-s+se^v}\right)dv
\end{align*}
and substituting $w = e^v$ gives, as above,
\begin{align*}
\int_{v(\t-1)}^{v(\t)} \left(1 - \frac{s e^v}{1-s+se^v}\right) dv
& = v(\t) - v(\t-1) + \int_{e^{v(\t-1)}}^{e^{v(\t)}} \frac{s}{1-s+sw} dw \\
& = v(\t) - v(\t-1) +\ln\left(\frac{1-s+s e^{v(\t-1)}}{1-s+s e^{v(\t)}}\right).
\end{align*}
So since $v(u) = \ln p(u)$,
\begin{equation}\label{step1}
\exp\left\{\a\int_{\t-1}^\t (\widetilde{F}(s, u) - 1) \ du\right\} = \frac{p(\t)}{p(\t-1)} \frac{1 - s + s p(\t-1)}{1 - s + s p(\t)}.
\end{equation}
We have $1 - q(\t) = p(\t) / p(\t - 1)$ (see \eqref{Qid}), so
\begin{equation}\label{step2}
\frac{p(\t)}{p(\t-1)}\frac{1 - s + s p(\t-1)}{1 - s + sp(\t)} = \frac{(1-s)(1-q(\t)) + sp(\t)}{1 - s + sp(\t)} = 1 + \frac{q(\t)(s-1)}{1-s + sp(\t)} = \widetilde{F}(s, \t).
\end{equation}
Now \eqref{step1} and \eqref{step2} imply that $\widetilde{F}(s, \t)$ satisfies \eqref{cheq2}.

To see that $1 - q(\t) = p(\t) / p(\t-1)$, recall from \eqref{jumpsize} that $Q(k) / Q(k^-) = -1/(\a e^\a)$ for integers $k$, and from Lemma \ref{constlem} (\ref{Qeat}) we have $q(\t) = \a^{-1} (Q(\t)e^{\a\t})' / (Q(\t)e^{\a\t})$ for non-integer values of $\t$. So the integral of $q(\t)$ is $\a^{-1}\ln(Q(\t) e^{\a\t})$, and
\begin{align}
\frac{p(\t)}{p(\t-1)} 
& = \exp\left\{-\a\int_{\t-1}^\t q(x)dx\right\}  \nonumber \\
& = \exp\left\{-\a\int_{\t-1}^{\lfloor \t\rfloor} q(x)dx\right\}\exp\left\{-\a\int_{\lfloor \t\rfloor}^\t q(x)dx\right\} \nonumber \\ 
& =  \frac{Q(\t-1)e^{\a(\t-1)}}{Q(\lfloor \t\rfloor^-) e^{\a\lfloor \t\rfloor}}\frac{Q(\lfloor \t\rfloor) e^{\a\lfloor \t\rfloor}}{Q(\t)e^{\a\t}} \nonumber\\
& = \frac{-Q(\t-1)}{\a Q(\t)} \nonumber \\
& = 1-q(\t). \label{Qid}
\end{align}
The last equality comes form the definition of $q(\t)$.
\end{proof}

Recall Lemma \ref{CMJbirthlem}.

\CMJbirthlem*

\begin{proof}[Proof of Lemma \ref{CMJbirthlem}]
Each Poisson process has lifetime exactly $1$, so
$$
\sum_{k=0}^{\lfloor\t\rfloor} d(k) \leq b(\t) \leq \sum_{k=0}^{\lceil \t\rceil}d(k)
$$
and in particular,
\begin{equation}\label{pigeonhole}
b(\t) \leq \lceil \t\rceil \max_{0\leq k\leq \lceil\t\rceil} d(k).
\end{equation}
From Lemma \ref{CMJlemma} we have
$$
\Prob{d(\t) > \ell} = q(\t)(1-p(\t))^\ell.
$$
For $\a < 1$, Lemma \ref{constlem} (\ref{zetalemma}), (\ref{smallalpha}) imply that $1-p(\t) \leq \a$, so
$$
\Prob{\max_{0\leq k\leq \lceil\t\rceil} d(k) > -2 \log_\a n} \leq \lceil \t\rceil \a^{-2\log_\a n} = o(n^{-1}).
$$
For $\a > 1$, Lemma \ref{constlem} (\ref{zetalemma}), (\ref{largealpha}) imply
$$
\Prob{\max_{0\leq k\leq\lceil\t\rceil} d(k) > \l n^{1/\eta}\ln n} \leq \lceil\t\rceil  (1 - \l_3 \zeta^{\lceil\t\rceil})^{\l n^{1/\eta}\ln n} \leq \lceil\t\rceil  \exp\left\{-\l \l_3 \zeta^{\lceil\t\rceil}n^{1/\eta}\ln n\right\}
$$
and since $\t\leq\log_\g n$ and $\zeta^{\log_\g n}n^{1/\eta} = 1$, this is $o(n^{-1})$ for $\l$ large enough.

Assertion (iii) follows from (i) for $\a < 1$. Suppose $\a > 1$. The claim will follow from showing that we can choose $A, B > 0$ so that if $\t\leq \log_\g n$,
\begin{equation}\label{CMJsurvival}
\Prob{\exists x\in [0,\t] : d(x) \geq A\log_\g n \mbox{ and } d(\t) \leq d(x) / B} = o(n^{-1}).
\end{equation}
Indeed, suppose $b(\t) \geq A(\log_\g n)^2$. Then by \eqref{pigeonhole} there exists some $x < \t$ for which $d(x) \geq b(\t)/\t \geq A\log_\g n$. It will follow from \eqref{CMJsurvival} that $d(\t) \geq b(\t) / (B\t) \geq AB^{-1}b(\t) / \log_\g n$ with probability $1 - o(n^{-1})$. If $b(\t) < A(\log_\g n)^2$ we choose $\l > A$ so that $d(\t) \geq 0 = \lfloor b(\t) / (C\log_\g^2 n)\rfloor$.

If $x' < \t$ is such that $d(x') \geq A\log_\g n$ Poisson processes are active, then either (i) at least $d(x') / 2$ of the processes are still active at time $x' + 1/2$, or (ii) at least $d(x') / 2$ of the processes were active at time $x' - 1/2$. In either case, there exists an $x < \t$ such that $d(x) \geq \frac{A}{2}\log_\g n$ and at least $d(x)/2$ processes are active at time $x + 1/2$. If $x \geq \t-1/2$ then $d(\t) \geq d(x) / 2$, so suppose $x < \t - 1/2$.

Suppose $\mP_i$ is a process which is active at times $x$ and $x + 1/2$. The probability that $\mP_i$ has at least one arrival in $(x,x + 1/2)$ is $1 - e^{-\a/2}$. Suppose $\mP_i$ has an arrival at time $x_i\in (x, x+1/2)$. Then the process starting at time $x_i$ can be seen as the initial process of a CMJ process $\mC_i$ on $[x_i, \t]$. Since $\a > 1$, the probability that $\mC_i$ is active at time $\t$ is $q(\t-x_i) \geq 1 - \zeta$ (see Lemma \ref{constlem} (\ref{decreasing}) and (\ref{largealpha})). In other words, if $X_i$ is the indicator variable for $\mP_i$ having an active descendant at time $\t$, then $\Prob{X_i = 1} \geq (1-e^{-\a/2})(1-\zeta)$. This is true independently for the $d(x)/2$ processes $\mP_1,\dots,\mP_{d(x)/2}$ active at time $x$ and $x+1/2$, and we have $d(\t) \geq X_1 + \dots + X_{d(x)/2}$. Choosing $A, B$ large enough, Hoeffding's inequality \cite{hoef} shows that $d(\t) \geq d(x) / B$ with probability $1 - o(n^{-1})$. This finishes the proof. \end{proof}

\end{document}